\DeclareFontFamily{U}{yswab}{}
\DeclareFontShape{U}{yswab}{m}{n}{<-> yswab}{}
\numberwithin{equation}{section}
\newtheorem{teo}{Theorem}[section]
\newtheorem{lem}{Lemma}[section]
\newtheorem{defn}{Definition}[section]
\newtheorem{obs}{Remark}[section]
\def\bbR{{\mathbb R}}
\def\bbZ{{\mathbb Z}}
\def\bbE{{\mathbb E}}
\def\bbV{{\mathbb V}}
\def\bbP{{\mathbb P}}
\title{Random Graphs Associated to some Discrete and Continuous Time Preferential Attachment Models}
\author{Angelica Pachon, Federico Polito \& Laura Sacerdote\\
	\footnotesize{Mathematics Department ``G.~Peano'', University of Torino, Italy}}
\begin{document}

	\maketitle

	\abstract
	
		We give a common description of Simon, Barab\'asi--Albert,  II-PA and Price growth models,
		by introducing suitable random graph processes with preferential attachment
		mechanisms. Through the II-PA model, we prove the conditions for which the asymptotic degree distribution of
		the Barab\'asi--Albert model coincides with the asymptotic in-degree distribution of the Simon
		model. Furthermore, we show that when the number of vertices in the Simon model
		(with parameter $\alpha$) goes to infinity, a portion of them  behave as a Yule model with parameters $(\lambda,\beta) = (1-\alpha,1)$,
		and through this relation we explain why asymptotic properties of a random vertex in Simon model,
		coincide with the asymptotic properties of a random genus in Yule model. 
		As a by-product of our analysis, we prove the explicit expression of the in-degree distribution
		for the II-PA model, given without proof in \cite{Newman2005}.	
		References to traditional and recent applications of the these models are also discussed.

		\medskip
		
		\noindent \textit{Keywords}: Preferential attachment; Random graph growth; Discrete and continuous time models; Stochastic processes.

		\medskip		
		
		\noindent \textit{MSC2010}: 05C80, 90B15.

	\section{Introduction}
	
		A large group of networks growth models can be classified as \emph{preferential
		attachment models}.
		In the simplest preferential attachment mechanism an edge connects a newly created node to one of those already present in
		the network with a probability proportional to the number of their edges.
		
		Typically what is analyzed for these models are properties related both to the growth of the number of edges
		for each node and to the growth of the number of nodes.

		After the seminal paper by Barab\'{a}si and Albert \cite{Barabasi1999}, models admitting
		a preferential attachment mechanism have been successfully
		applied to the growth of different real world networks, such as, amongst others, physical, biological or social networks.
		The typical feature revealing a
		preferential attachment growth mechanism is the presence of power-law
		distributions, e.g., for the degree (or in-degree) of a node selected uniformly at random.

		Despite its present success, the preferential attachment paradigm is not new. In fact it
		dates back to a paper by Udny Yule \cite{Yule1925}, published in 1925 and regarding the development
		of a theory of macroevolution. Specifically the study concerned the
		time-continuous process of creation of genera and the evolution of species belonging to them.
		Yule proved that when
		time goes to infinity, the limit distribution of the number of species in a
		genus selected uniformly at random has a specific form and exhibits
		a power-law behavior in its tail. Thirty years later,
		the Nobel laureate Herbert A.\ Simon proposed a time-discrete preferential attachment model to describe the
		appearance of new words in a large piece of a text. Interestingly enough, the limit
		distribution of the number of occurrences of each word, when the number of
		words diverges, coincides with that of the number of species belonging to the randomly
		chosen genus in the Yule model, for a specific choice of the parameters. This fact
		explains the designation \emph{Yule--Simon distribution} that is commonly assigned to that limit
		distribution. 

		Furthermore, it should be noticed that Barab\'{a}si--Albert model exhibits an asymptotic degree distribution
		that equals the Yule--Simon distribution in correspondence of a specific
		choice of the parameters and still presents power-law characteristics for more general choices of the parameters.
		The same happens also for other preferential attachment models. 

		Yule, Simon and Barab\'{a}si--Albert models share the preferential attachment
		paradigm that seems to play an important role in the explanation of the scale-freeness of real networks.
		However, the mathematical tools classically used in their analysis are different. This makes
		difficult to understand in which sense models producing very similar asymptotic
		distributions are actually related one another.
		Although often remarked and heuristically justified, no
		rigorous proofs exist clarifying conditions for such result.
		Different researchers from different disciplines, for example theoretical
		physicists and economists asked themselves about the relations
		between Simon, Barab\'{a}si--Albert, Yule and also some other models
		closely related to these first three (sometimes confused in the
		literature under one of the previous names). Partial studies in this direction
		exist but there is still a lack of clarifying rigorous results that would avoid
		errors and would facilitate the extension of the models. 

		The existing results refer to specific models and conditions but there is not a
		unitary approach to the problem. For instance, in \cite{PhysRevE.64.035104}, the authors compared the
		distribution of the number of occurrences of a different word in Simon
		model, when time goes to infinity, with the degree
		distribution in the Barab\'{a}si--Albert model, when the number of vertices
		goes to infinity. In \cite{SIMON1955}, an explanation relating the asymptotic
		distribution of the number of species in a random genus in Yule model and
		that of the number of different words in Simon model appears. More recently,
		following a heuristic argument, Simkin and Roychowdhury \cite{Simkin2011} gave a
		justification of the relation between Yule and Simon models.

		The aim of this paper is to study rigorously the relations between these
		three models. A fourth model, here named II-PA model (second preferential attachment model),
		will be discussed in order to better
		highlight the connections between Simon and Barab\'{a}si--Albert models. Also we include the Price model
		that predates the  Barab\'{a}si--Albert model, and is in fact the first model using a preferential attachment rule for networks.  
		
		The idea at the basis of our study is to make use of random graph processes theory to
		deal with all the considered discrete-time models and to include in this
		analysis also the continuous-time Yule model through the introduction of two suitable
		discrete-time processes converging to it. In this way we find a relationship between the discrete time models and the continuous time
		Yule model, which is easier to handle and extensively studied. Translating results from discrete models to their
		continuous counter-parts  is usually a strong method to analyze asymptotic properties.
		Thus, Theorems \ref{S-Yule} and \ref{S-Yule2}  provide an  easy tool for this.

		The random graph process approach was used by Barab\'{a}si and Albert to define their
		preferential attachment model of World Wide Web \cite{Barabasi1999}. At each discrete-time step a new vertex
		is added together with $m$ edges originating from it.  The end points
		of these edges are selected with probability proportional to the current degree of the vertices in the network.
		Simulations from this model show that the proportion of vertices with degree $k$ is $c_{m}k^{-\gamma }$,
		with $\gamma $ close to $3$ and $c_{m}>0$ independent
		of $k$. A mathematically rigorous study of this model was then performed
		by Bollob\'{a}s, Riordan, Spencer and Tusnady \cite{Bollobas2001} making use of random graph
		theory.
		The
		rigorous presentation of the model allowed the authors to prove that
		the proportion of vertices with degree $k$ converges in probability to $m(m+1)B\left( k,3\right)$
		as the number of vertices diverges, where $B\left( x,y\right)$
		is the Beta function. 

		Here we reconsider all the models of interest in a random graph process framework.
		In Section \ref{definizioni} we introduce the necessary notations and basic definitions.
		Then, in Section \ref{bbbb}, we present the four preferential attachment
		models of interest, i.e.\ Simon, II-PA, Price, Barab\'{a}si--Albert, and Yule models,
		through a mathematical description that makes use of the random graphs
		approach. Such a description allows us to highlight an aspect not always well
		underlined: the asymptotic distributions that in some cases coincide do not
		always refer to the same quantity. For instance, the Barab\'{a}si--Albert model
		describes the \emph{degree} of the vertices while II-PA considers the \emph{in-degree}. In
		Section \ref{bbbb} we also discuss the historical context and the list of available
		mathematical results for each model. The proposed point of view by means of random
		graphs processes then permits us to prove the novel results presented in Section \ref{princip}.
		The theorems described and proved there clarify the relations between the considered asymptotic
		distributions of the different models, specifying for which
		choice of the parameters these distributions coincide and when they are not
		related.

		In the concluding Section \ref{finale} we summarize the proved results and we illustrate
		with a diagram the cases in which the considered models are actually related.

	\section{Definitions and mathematical background}

		\label{definizioni}
		In this section we introduce some classical definitions, theorems and mathematical tools we will use in the rest of the paper. 

		Let us define a \emph{graph} $G=(V,E)$ as an ordered pair comprising a set of vertices $V$ with a set of edges or lines $E$ which are
		$2$-elements subsets of $V$, so $E\subseteq V\times V$.
		A graph $G$ is \emph{directed} if its edges are directed, i.e., if for every edge $(i,j)\in E$, $(i,j)\neq (j,i)$,
		otherwise $G$ is called an \emph{undirected} graph.  

		We  say  that $G$ is a \emph{random graph}, if it is a graph selected according with a probability distribution over a set of graphs,
		or it is determined by a stochastic process that describes the random evolution of the graph in time.
		A stochastic process generating a random graph is called a \emph{random graph process}.
		In other words, a random graph process is a family $(G^t)_{t\in \mathcal{T}}$ of random graphs (defined on a common probability space)
		where $t$ is interpreted as time and $\mathcal{T}$ can be either countable or uncountable. 

		A \emph{loop} is an edge that connects a vertex to itself.
		The \emph{in-degree} of a vertex $v$ at time $t$, denoted by $\vec{d}(v,t)$, is the number of incoming edges (incoming connections).
		Similarly, the \emph{degree} of a vertex $v$ at time $t$, denoted by $d(v,t)$,
		is the total number of incoming and outgoing edges at time $t$ (when an edge is a loop, it is counted twice).
		In this paper we also use the term \emph{directed loop} to indicate a loop that counts one to the in-degree. 
		
		The random graphs studied in this paper are random graph processes starting at time $t=0$, without any edge neither vertex,
		growing monotonically by adding at each discrete time step either a new vertex  or some directed edges between the vertices
		already present, according to some law $\bbP(v_i^t\longrightarrow v_j^t)=\bbP((i,j)\in G^t)$.

		We  focus here on the analysis of the  number of vertices with degree or in-degree $k$ at time $t$, which we
		denote by $N_{k,t}$ and $\vec N_{k,t}$, respectively. In particular  we are interested in the asymptotic degree
		or in-degree distribution of a random vertex, i.e., in the proportion $N_{k,t}/V_t$ or $\vec N_{k,t}/V_t$,  as $t$ goes to infinity,
		where $V_t$ denotes the total number of vertices at time $t$.
		We will add an upper index to $N_{k,t}$ or $\vec N_{k,t}$, for instance $\vec N_{k,t}^{\textup{Simon}}$, to
		indicate the process to which we refer, if necessary.

		Furthermore, we will make use of the following standard notation: for (deterministic) functions $f =f(t)$ and $g=g(t)$,
		we write $f =O(g)$ if $\lim_{t\to \infty} f/g$ is bounded, $f \sim g$ if $\lim_{t\to \infty} f/g=1$,
		and  $f = o(g)$ if $\lim_{t\to \infty} f/g=0$.

		One of the methods used  in the literature to study the asymptotic behavior  of $N_{k,t}/V_t$ or $\vec N_{k,t}/V_t$ is
		to prove that these random processes concentrate
		around their  expectations. In order to do this, the Azuma and Hoeffding inequality is applied, when possible (see also \cite{Durrett2006},
		page 93).

		\begin{lem}[Azuma and Hoeffding inequality \cite{JansonLuczakRucinski00}] Let $(X_t)_{t=0}^n$ be a martingale with $|X_s-X_{s-1}|\leq c$ for 				$1\leq s\leq t$ and $c$ a positive constant. Then
			\begin{align}
				\label{AzHoff}
				\bbP(|X_t-X_0|>x)\leq \exp(-x^2/2c^2t).
			\end{align}
		\end{lem}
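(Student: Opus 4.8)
The plan is to follow the classical Chernoff-bound argument for sums of bounded martingale differences. First I would introduce the martingale increments $Y_s := X_s - X_{s-1}$, so that $X_t - X_0 = \sum_{s=1}^{t} Y_s$, each $Y_s$ satisfies $|Y_s| \le c$, and, writing $\mathcal{F}_{s-1}$ for the $\sigma$-algebra generated by $X_0,\dots,X_{s-1}$, the martingale property gives $\bbE[Y_s \mid \mathcal{F}_{s-1}] = 0$.

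The key analytic ingredient is the exponential (Hoeffding) bound: for every real $\lambda$ and every random variable $Y$ with $|Y|\le c$ and $\bbE[Y]=0$, one has $\bbE[e^{\lambda Y}] \le e^{\lambda^2 c^2/2}$. I would prove it by using convexity of $y\mapsto e^{\lambda y}$ on $[-c,c]$ to write $e^{\lambda y}\le \frac{c-y}{2c}\,e^{-\lambda c}+\frac{c+y}{2c}\,e^{\lambda c}$, taking expectations to eliminate the linear term, and then bounding $\cosh(\lambda c)\le e^{\lambda^2 c^2/2}$ by a term-by-term comparison of the two Taylor series (equivalently, by a short estimate of the second derivative of $u\mapsto\log\cosh u$). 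Applied conditionally on $\mathcal{F}_{s-1}$, this yields $\bbE[e^{\lambda Y_s}\mid \mathcal{F}_{s-1}]\le e^{\lambda^2 c^2/2}$ for each $s$.

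Next I would iterate over $s$: conditioning on $\mathcal{F}_{t-1}$ and pulling out the measurable factor, $\bbE[e^{\lambda(X_t-X_0)}] = \bbE[\,e^{\lambda(X_{t-1}-X_0)}\,\bbE[e^{\lambda Y_t}\mid\mathcal{F}_{t-1}]\,]\le e^{\lambda^2 c^2/2}\,\bbE[e^{\lambda(X_{t-1}-X_0)}]$, and peeling off one factor $e^{\lambda^2 c^2/2}$ at each step gives $\bbE[e^{\lambda(X_t-X_0)}]\le e^{\lambda^2 c^2 t/2}$. Markov's inequality then yields, for $\lambda>0$, $\bbP(X_t-X_0 > x)\le e^{-\lambda x}\,\bbE[e^{\lambda(X_t-X_0)}]\le \exp(-\lambda x+\lambda^2 c^2 t/2)$, and minimizing this quadratic in $\lambda$ at $\lambda = x/(c^2 t)$ gives $\exp(-x^2/(2c^2 t))$. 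Running the same argument for the martingale $(-X_s)_{s=0}^{n}$ bounds $\bbP(X_0-X_t>x)$ by the same quantity, which yields the claimed estimate for $\bbP(|X_t-X_0|>x)$.

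There is no real obstacle here: the argument is entirely standard, and the only step requiring any care is the elementary inequality $\cosh u\le e^{u^2/2}$ behind Hoeffding's bound; everything else is the tower property and the minimization of a one-variable quadratic. (Strictly speaking the clean exponential is the bound for the one-sided event $\{X_t-X_0>x\}$; the two-sided statement costs at most a harmless factor $2$ in front of the exponential, which is immaterial for the applications we have in mind.)
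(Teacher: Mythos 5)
Your argument is the standard Chernoff/exponential-moment proof of the Azuma--Hoeffding inequality and it is correct; note, however, that the paper does not prove this lemma at all --- it is quoted verbatim from the cited reference \cite{JansonLuczakRucinski00} and used as a black box, so there is no in-paper proof to compare against. The one substantive point, which you yourself flag in your parenthetical, deserves emphasis: your method genuinely proves the one-sided bound $\bbP(X_t-X_0>x)\leq \exp(-x^2/2c^2t)$, and the union bound over the two tails gives $\bbP(|X_t-X_0|>x)\leq 2\exp(-x^2/2c^2t)$. The statement as printed in the paper, with the two-sided event but no factor $2$, is therefore slightly stronger than what your proof (or the standard proof) delivers; the usual textbook form carries the factor $2$. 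As you observe, this is immaterial for every application in the paper, since the bound is only ever used to show that a probability tends to $0$, but if one wanted the lemma exactly as stated one would either have to insert the factor $2$ or restrict to the one-sided event. Everything else --- the convexity bound $e^{\lambda y}\le \frac{c-y}{2c}e^{-\lambda c}+\frac{c+y}{2c}e^{\lambda c}$, the estimate $\cosh(\lambda c)\le e^{\lambda^2c^2/2}$, the conditional application and the telescoping via the tower property, and the optimization at $\lambda=x/(c^2t)$ --- is exactly right and complete.
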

		One of the first authors to use this approach in preferential attachment random graphs studies were
		Bollob\'as, \textit{et. al} in \cite{Bollobas2001}. Here we apply this approach to study different random graph processes.
		In Section \ref{bbbb} we illustrate this technique by analyzing the Simon model, reporting the corresponding computations
		for the Barab\'asi--Albert model.

	\section{Preliminaries: Preferential attachment models}

		\label{bbbb}
		As stressed in the introduction, a number of models that make use of ``preferential attachment''
		mechanisms are present in the literature.  Here we consider some of them rigorously introducing the corresponding random graph processes
		with the aim to allow a comparison of their features.
		To this aim, it helps to present the most known models using a common notation. We first discuss the case of discrete time preferential
		attachment models, specifically Simon and Barab\'asi--Albert models, and  some others inspired by Simon model,  the II-PA model
		(second preferential attachment model) and Price model, which will help us to understand  the relations between Simon
		and Barab\'asi--Albert models.
		Moreover, we also discuss a continuous time preferential attachment model, the Yule model, which is defined in terms of
		independent homogeneous linear birth processes.
		We rigorously prove that this model can be related with Simon, and hence Barab\'asi--Albert models.   

		The Barab\'asi--Albert model presented in \cite{Barabasi1999} omits some necessary
		details to be formulated in terms of a random graph process.
		Here we follow its description detailed as in Bollob\'as \textit {et.~al} \cite{Bollobas2001} where the rules for the growth of the random
		graph not mentioned in \cite{Barabasi1999} are given.
		Furthermore, in order to make easier the understanding of each model, we follow the same scheme for its presentation,
		eventually specifying the absence of some results when not yet available. 

		Our scheme considers:
		\begin{enumerate}
			\item The mathematical description of the associated graph structure and its growth law.
			\item The historical context motivating the first proposal of the model and some successive applications.
			\item Available results on the degree or in-degree distribution  with particular reference to  power law behavior.
				We collect both, theorems and simulation results.  
		\end{enumerate}

		\subsection{Simon model}
		
			\label{Simonmodel}
			\begin{enumerate}
				\item \textbf{Mathematical description:}
					The Simon model can be described as a random graph process in discrete time $(G_{\alpha}^t)_{t\geq1}$, so that $G_{\alpha}^t$
					is a directed graph which starts  at time $t=1$  with a single vertex $v_1$ and a directed loop.
					Then, given $G_{\alpha}^t$, one forms $G_{\alpha}^{t+1}$ by either  adding with probability $\alpha$ a new vertex
					$v_i$ with a directed loop, $i\leq t+1$,  or  adding with probability (1-$\alpha$) a directed edge  between the last added vertex
					$v$ and  $v_j$, $1\leq j\leq t$, where the probability of $v_j$ to be chosen is proportional to its in-degree, i.e., 
					\begin{align}
						\label{parule}
						\bbP(v \longrightarrow v_j) = (1-\alpha)\vec d(v_j,t)/t, \qquad 1\leq j\leq t.
					\end{align}
					In Figure \ref{figSimon} we illustrate the growth law of this graph.
	
					\tikzset{every loop/.style={min distance=10mm,in=0,out=60,looseness=10}}
	
					\begin{figure}[ht]
						\begin{center}
							\begin{tabular}{ccc}
								\hspace{1cm}&\hspace{1cm}
								\begin{tikzpicture}
									[scale=0.8]
									\tikzstyle{place}=[circle,thick,draw=blue!75,fill=blue!20,minimum size=6mm]
									\node[place] (foo)  {$v_1$};
									\draw (0,-1.2) node[below,font=\footnotesize] {$(a)$};
									\path[<-] (foo) edge  [loop above] ();
								\end{tikzpicture}
								\begin{tikzpicture}
									[scale=0.8]
									\tikzstyle{place}=[circle,thick,draw=blue!75,fill=blue!20,minimum size=6mm]
									\node[place] (v1)  {$v_1$};
									\node[place] (v2) at (2,0)  {$v_2$};
									\draw (1,-1.2) node[below,font=\footnotesize] {$(b)$};
									\path[<-] (v1) edge  [loop above] ();
									\path[->,every loop/.style={looseness=10}] (v1) edge  [in=120,out=60,loop] ();
									\path[<-]  (v2) edge  [loop above] ();        
									\path[->]  (v2) edge[auto, very near start]  (v1); 
								\end{tikzpicture}
								\begin{tikzpicture}
									[scale=0.8]
									\tikzstyle{place}=[circle,thick,draw=blue!75,fill=blue!20,minimum size=6mm]
									\node[place] (v1)  {$v_1$};
									\node[place] (v2) at (2,0)  {$v_2$};
									\node[circle,thick,draw=red!75,fill=red!20,minimum
									size=6mm] (v3) at (4,0)  {$v_3$};
									\draw (2,-1.2) node[below,font=\footnotesize] {$(c )$};
									\path[<-] (v1) edge  [loop above] ();
									\path[->,every loop/.style={looseness=10}] (v1) edge [in=120,out=60,loop] ();
									\path[->]  (v2) edge  [loop above] ();     
									\path[->]  (v2) edge[auto, very near start] (v1); 
									\path[->,red, dashed, very thick,every node/.style={font=\footnotesize},every loop/.style={looseness=10}]
									(v2) edge [in=120,out=60,loop] node [above right]
									{$(\ref{parule})$} ();
									\path[<-,red]  (v3) edge  [loop above] 
									node [above right] {$\alpha$} ();
									\path[->,red, dashed, very thick,every node/.style={font=\footnotesize}] (v2)
									edge [bend left] node [below left] {$(\ref{parule})$} (v1);
								\end{tikzpicture}
							\end{tabular}
						\end{center}
						\caption{\footnotesize{\label{figSimon}Construction of $(G_{\alpha}^t)_{t\geq1}$.
						(a) Begin at time $1$ with one single vertex and a directed loop.
						(b) Suppose some time has passed, in this case, the picture corresponds to a a realization of the process at time $t=4$.
						(c) Given $G_{\alpha}^4$ form $G_{\alpha}^{5}$ by either  adding with probability $\alpha$
						a new vertex $v_3$ with a directed loop,
						or  adding a directed edge with probability given by ($\ref{parule}$).}}
					\end{figure}
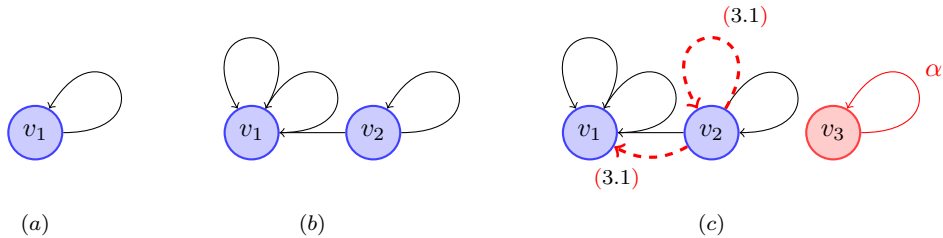
	
				\item \textbf{Historical context:}
					In \cite{SIMON1955}, Simon considered a model to describe the growth of a text that is being written such that a word is
					added at each time $t\geq1$. Different words  correspond to different vertices and repeated words to directed edges in the
					previous description. Simon introduced  the two following conditions:
					For $\alpha \in (0,1)$,
					\begin{enumerate}
						\item $\bbP[(t+1)\text{th word  has not yet appeared at time $t$}]=\alpha$
						\item $\bbP[(t+1)\text{th word  has  appeared $k$ times at time $t$}]=(1-\alpha)k \vec N_{k,t}/t$,
					\end{enumerate}
					where $\vec N_{k,t}$ is the number of  \textit{different} words that have appeared exactly $k$ times at time $t$,
					or  the number of vertices that have exactly $k$ incoming edges (i.e.\ in-degree $k$) at time $t$ in $G_{\alpha}^t$.
					Thus, at time $t+1$  either with  probability $\alpha$ a new word appears (i.e.,  a new vertex $v_i$, $i\leq t+1$,
					with a directed loop appears),  or with probability $(1-\alpha)$ the word is not new, and if it has  appeared $k$ times
					at time $t$,  a directed  edge is added.  The starting point of this edge is the last vertex that has appeared
					in $G_{\alpha}^t$,  while its  end point is selected with probability (\ref{parule}) that corresponds in this case to $k/t$. 
	
				\item \textbf{Available results:}
					Simon was interested  in getting  results for the proportion  of vertices that have exactly  in-degree $k$,
					with respect to the total number of vertices $V_t$ at time $t$.  Thus,  he proved asymptotic results for
					$\bbE \vec N_{1,t}/\bbE V_t$ as $t\longrightarrow\infty$.  
	
					Next, we will give a brief synopsis  of the computations made by Simon in \cite{SIMON1955}. 
					The idea is to condition on what has happened until time $t$ and compute the expected value at time $t+1$.
					For $k=1$ it holds 
					\begin{align}
						\label{Sk1}
						\bbE \vec N_{1,t+1}
						&= \alpha+\Big(1-\frac{(1-\alpha)}{t} \Big)\bbE \vec N_{1,t},
					\end{align}
					and, for $k>1$,
					\begin{align}
						\label{Sk} 
						\bbE \vec N_{k,t+1}
						&= \frac{(1-\alpha)}{t}\big[ (k-1)\bbE \vec N_{k-1,t}-k\bbE \vec N_{k,t}\big]+\bbE \vec N_{k,t}.
					\end{align}
	
					Simon  solved  (\ref{Sk1}) and (\ref{Sk}) (see also \cite{Durrett2006}, pages 98--99) to get, as 
					$t\longrightarrow\infty$,
					\begin{align}
						\label{p1} 
						\frac{\bbE \vec N_{1,t}}{t}\longrightarrow\frac{\alpha}{2-\alpha},
					\end{align} 
					and for $k>1$,
					\begin{align}
						\label{pk} 
						\frac{\bbE \vec N_{k,t}}{t}\longrightarrow\frac{\alpha}{1-\alpha}\frac{\Gamma(k)\Gamma
						\Big(1+\frac{1}{1-\alpha}\Big)}{\Gamma\Big(k+1+\frac{1}{1-\alpha}\Big)},
					\end{align}
					where $\Gamma$ is the  gamma function.
	
					Observe now that the number of vertices appeared until time $t$, $V_t\sim \text{Bin}(t,\alpha)$, so $\bbE V_t=\alpha t$.
					Hence, using this and (\ref{p1}) and (\ref{pk}) for $k=1$,
					\begin{align}
						\label{e1} 
						\frac{\bbE \vec N_{1,t}}{\bbE V_t}\longrightarrow\frac{1}{2-\alpha},
					\end{align} 
					and for $k>1$,
					\begin{align}
						\label{ek} 
						\frac{\bbE \vec N_{k,t}}{\bbE V_t}\longrightarrow\frac{1}{1-\alpha}\frac{\Gamma(k)\Gamma\Big(1
						+\frac{1}{1-\alpha}\Big)}{\Gamma\Big(k+1+\frac{1}{1-\alpha}\Big)}
						=\frac{1}{1-\alpha}B\Big(k,1+\frac{1}{1-\alpha}\Big),
					\end{align}
					as 
					$t\longrightarrow\infty$, where $B(x,y)$ is the Beta function.
	 
					Now,  let $\mathcal{G}_r$ and $\mathcal{G}_s$ denote the $\sigma$-fields generated by the appearance of directed edges up
					to time $r$ and $s$ respectively,  $r\leq s\leq t$. Since $\bbE\big[\bbE(\vec N_{k,t}|\mathcal{G}_s)
					\mid \mathcal{G}_r\big]=\bbE(\vec N_{k,t}|\mathcal{G}_r)$, then,  $Z_s^{\textup{Simon}} = \bbE(\vec N_{k,t}|\mathcal{G}_s)$
					is a martingale, such that  $Z_t^{\textup{Simon}}=\vec N_{k,t}$ and $Z_0^{\textup{Simon}}=\bbE\vec N_{k,t}$.
					Furthermore,  observe that  at each unit of time, say $s$,  either  a new vertex appears or  the last one added,
					is  attaching to another existing vertex  $v_j$, $j\leq s$, but note  this  does not effect the in-degree
					of $v\neq v_j$, or the probabilities these vertices will choose later, so it yields that
					$|Z_s^{\textup{Simon}} -Z_{s-1}^{\textup{Simon}}| \leq 1$.  
					Then,  it is possible to use  Azuma--Hoeffding's inequality (\ref{AzHoff}), and obtain that for every
					$\epsilon_t\gg t^{-1/2}$ (for example take $\epsilon_t=\sqrt{\ln t/t}$),
					\begin{align}
						\label{AzumaSimon} 
						\bbP\Big(\Big|\frac{\vec N_{k,t}}{t}-\frac{\bbE\vec N_{k,t}}{t}\Big|\geq\epsilon_t\Big)\leq
						\exp\Big(-\frac{(t \epsilon_t)^2}{2t}\Big)\longrightarrow 0.
					\end{align}
					Now, using Chebyschev's inequality, for every  $\varepsilon_t>0$, such that $t\varepsilon_t^2\longrightarrow\infty$
					as $t\longrightarrow\infty$,
	    				\begin{align}
	    					\label{chebyshevVt} 
						\bbP\Big(\Big|\frac{V_t}{ t}-\frac{\bbE V_t}{t}\Big|\geq\varepsilon_t\Big)
						\leq\frac{t\alpha(1-\alpha)}{t^2\varepsilon_t^2}\longrightarrow0.
					\end{align} 
					Hence, by  (\ref{AzumaSimon}) and  (\ref{chebyshevVt}), $\vec N_{k,t}/t\longrightarrow \bbE \vec N_{k,t}/t$
					and $V_t/ t\longrightarrow \bbE V_t/t$ in probability. 
	
					Finally,  since $\bbE \vec N_{k,t}/t$ and $\bbE V_t/t$ converge as $t$ goes to infinity to the constant values
					(\ref{p1}) and (\ref{pk}) respectively,  and because $V_t$ is a random variable with binomial distribution,
					$\text{Bin}(t,\alpha)$,  then by properties of convergence in probability we obtain that 
					\begin{align}
						\label{simondist}
						\frac{\vec N_{k,t}}{V_t}\longrightarrow \frac{1}{1-\alpha}\frac{\Gamma(k)\Gamma\Big(1+\frac{1}{1-\alpha}
						\Big)}{\Gamma\Big(k+1+\frac{1}{1-\alpha}\Big)}=\frac{1}{1-\alpha}B\Big(k,1+\frac{1}{1-\alpha}\Big),
					\end{align}
					in probability.
			\end{enumerate}

		\subsection{II-PA model (second preferential attachment model)}

			\label{IIPAMDesc}
			\begin{enumerate} 
				\item \textbf{Mathematical description:}
					In \cite{Newman2005} a different model is analyzed. In that paper it is called Yule model and described in discrete time.
					The model is defined also as a preferential attachment model but  in this case at each time step $n$ a new vertex is added with
					exactly $m+1$ directed edges, $m\in\bbZ^+$. These edges start from the new vertex and are directed towards any of the previously
					existing vertices according to a preferential attachment rule.
					To define formally a random graph process, we can think for a moment at an increasing time rescaled by
					$1/(m+1)$ so that at each unit of time $n$, $m+1$ scaling time steps happen. 
					Let  $(\tilde{G}_m^t)_{t\geq 1}$ be a random graph process  such that for all $n\in \bbZ^+\cup\{0\}$,
					\begin{enumerate}
						\item  at time $t=n(m+1)+1$  add a new vertex $v_{n+1}$ with a directed loop (it does count one for the in-degree), and
						\item  for $i=2,\dots,m+1$ at each time $t=n(m+1)+i$  add a directed edge from $v_{n+1}$ to  $v_j$, $1\leq j\leq n+1$, with 								probability
							\begin{align}
								\label{IIPAM}
								\mathbb{P}(v_{n+1} \longrightarrow v_j) = \frac{\vec d(v_j,t-1)}{t-1} .
							\end{align} 
							Note then that  $(\tilde{G}_m^t)_{t\geq 1}$ starts at time $t=1$  with a single vertex and one directed loop.
					\end{enumerate}
					In Figure \ref{II-PA-fig} we illustrate the growth law of this graph.
					
					\tikzset{every loop/.style={min distance=10mm,in=0,out=60,looseness=10}}

					\begin{figure}[ht]
						\begin{center}
							\begin{tabular}{ccc}
								\hspace{1cm}&\hspace{1cm}
								\begin{tikzpicture}
									[scale=0.8]
									\tikzstyle{place}=[circle,thick,draw=blue!75,fill=blue!20,minimum size=6mm]
									\node[place] (foo)  {$v_1$};
									\draw (0,-1.2) node[below,font=\footnotesize] {$(a)$};
									\path[<-] (foo) edge  [loop above] ();
								\end{tikzpicture}
								\begin{tikzpicture}
									[scale=0.8]
									\tikzstyle{place}=[circle,thick,draw=blue!75,fill=blue!20,minimum size=6mm]
									\node[place] (v1)  {$v_1$};
									\draw (0,-1.2) node[below,font=\footnotesize] {$(b)$};
									\path[<-] (v1) edge  [loop above] ();
									\path[->,every loop/.style={looseness=10}] (v1) edge  [in=120,out=60,loop] ();
									\path[->,every loop/.style={looseness=10}] (v1) edge  [in=180,out=120,loop] ();
								\end{tikzpicture}
								\begin{tikzpicture}
									[scale=0.8]
									\tikzstyle{place}=[circle,thick,draw=blue!75,fill=blue!20,minimum size=6mm]
									\node[place] (v1)  {$v_1$};
									\node[place] (v2) at (2,0)  {$v_2$};
									\draw (1,-1.2) node[below,font=\footnotesize] {$(c )$};
									\path[<-] (v1) edge  [loop above] ();
									\path[->,every loop/.style={looseness=10}] (v1) edge [in=120,out=60,loop] ();
									\path[->,every loop/.style={looseness=10}] (v1) edge  [in=180,out=120,loop] ();
									\path[->]  (v2) edge  [loop above] ();     
									\path[->,red, dashed, very thick,every loop/.style={looseness=10},every node/.style={font=\footnotesize}]
									(v2) edge [in=120,out=60,loop] node [above right]
									{$(\ref{IIPAM})$} ();
									\path[->,red, dashed, very thick,every node/.style={font=\footnotesize}] (v2)
									edge [bend left] node [below left] {$(\ref{IIPAM})$} (v1);
								\end{tikzpicture}
							\end{tabular}
						\end{center}
						\caption{\footnotesize{\label{II-PA-fig}Construction of $(\tilde{G}_{m}^t)_{t\geq1}$ for $m=2$.
						(a) Begin at time $1$ with one single vertex and a directed loop.
						(b) Suppose some time has passed, in this case, the picture corresponds to a a realization of the process at time $t=3$.
						Keep in mind that here $m=2$ and therefore $m=2$ directed edge are added to the graph by preferential attachment rule
						(but at this point the only possible choice is the vertex $v_1$).
						(c) Here time is $t=5$. A new vertex $v_2$ already appeared at time $t=4$ together with a directed loop.
						At time $5$ instead the first of the $m$ edges that must be added to the graph is chosen (red dashed directed edges)
						by means of the preferential attachment probabilities \eqref{IIPAM}.}}
					\end{figure}
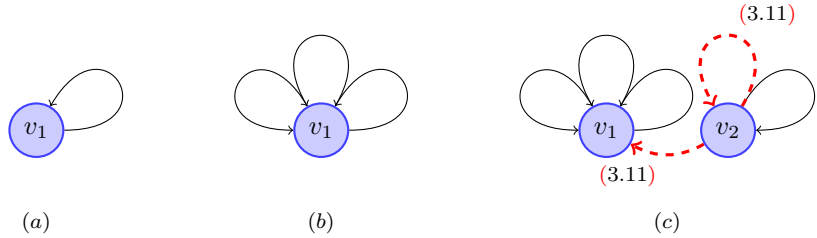
				
				\item \textbf{Historical context:}
					In \cite{Newman2005}, Newman  describes this model  in terms of genus and species as follows. 
					\begin{quotation}
						\noindent \textit{Species are added to genera by ``speciation'', the splitting of one species into two, $[\dots]$.
						If we assume that this happens at some stochastically constant rate, then it follows that a genus with $k$ species
						in it will gain new species at a rate proportional to $k$, since each of the $k$ species has the same chance per unit
						time of dividing in two. Let us further suppose that occasionally, say once every $m$ speciation events,
						the new species produced is, by chance, sufficiently different from the others in its genus as to be considered the
						founder member of an entire new genus. (To be clear, we define $m$ such that $m$ species are added to preexisting genera
						and then one species forms a new genus. So $m+1$ new species appear for each new genus and there are $m+1$
						species per genus on average.)} 
					\end{quotation}
					This description is  linked to the model proposed by Simon;
					the difference  is that the original Simon model does not fix  $m$ speciation events, instead it assumes
					that the number of speciation events is random and follows a probability distribution
					$\text{Geo}(\alpha)$, with $0<\alpha<1$. 

				\item \textbf{Available results:}
					Note that the number of vertices with in-degree equal to $k$ is equivalent to the number  of genera that have
					$k$ species, thus,  the number of vertices with in-degree equal to $k$ at  time  $t=n(m+1)$, corresponds to the number
					of genera that have $k$ species, when the number of genera is $n$.

					Let $\vec N_{k,t}$ be the number of vertices with in-degree equal to $k$ in $(\tilde{G}_m^t)_{t\geq 1}$.
					In \cite{Newman2005}  an heuristic analysis of the II-PA  model shows that  the proportion of vertices that have exactly
					in-degree $k$, with respect to the total number of vertices
					at time $t=n(m+1)$, is in the limit
					\begin{equation}
						\label{West}
						\lim_{n\longrightarrow\infty} \frac{\vec N_{k,t}}{n} =\frac{(1+1/m)\Gamma(k)\Gamma(2+1/m)}{\Gamma
						(k + 2 + 1/m)}=(1 + 1/m)B(k, 2 + 1/m),
					\end{equation} 
					We prove this  in Theorem \ref{newman} where the result is obtained with probability one.
			\end{enumerate}

		\subsection{Price model}

			\label{Price}
			\begin{enumerate} 
				\item \textbf{Mathematical description:}
					In \cite{Newmannew}, the Price model is described as a random graph process in discrete time
					$(\tilde{G}_m^n)_{n\geq 1}$, so that $\tilde{G}_m^n$ is a direct graph  and the process starts at time
					$n=1$ with a single vertex, $v_1$, and $M_1+k_0$ directed loops, where $k_0>0$ is constant and $M_1$ a random variable  with expectation $m$.  New vertices are continually added to the network, though not necessarily at a constant rate. Each added vertex has a certain out-degree,  and this out-degree is fixed permanently at the creation of the vertex. The out-degree may vary from one vertex to another, but the mean out-degree, which is denoted $m$, is a constant over time.
					Thus, given $\tilde{G}_m^n$ form $\tilde{G}_m^{n+1}$ by adding a new vertex $v_{n+1}$ with $k_0$ directed loops,
					and from it a random number of directed edges, $M_{n+1}$ to different old vertices with probability proportional
					to their in-degrees at time $n$, i.e.,  
					\begin{align}
						\label{Priceprob}
						\mathbb{P}(v_{n+1} \longrightarrow v_j\mid M_1=m_1,\dots,M_n=m_n) = \frac{\vec d(v_n,n)}{nk_0+\sum_{i=1}^{n} m_i}, \qquad 1\leq j\leq n,
					\end{align} 
					where  $M_1,\dots,M_{n+1}$  are taken  independent and identically distributed, with $\bbE(M_i)=m$,
					and $m$  a positive rational number. Note that  in this model, the update of the probabilities
					(\ref{Priceprob}) every single time an edge is added, is not taken into account.		
			
                \item \textbf{Historical context:} In \cite{342}, Price describes empirically the nature of the total world network
                	of scientific papers, and it is probably the first example of what is now called a scale-free network.
                	In \cite{343}, he formalizes a model giving rise to what he calls \textit{the cumulative advantage distribution}.
                	He finds a system of differential equations describing the process, and solves them under specific assumptions.
                	All the derivations are made for $k_0=1$.			
                		
                \item \textbf{Available results:} Let $\vec N_{k,n}$ be the number of vertices with in-degree equal to
                 	$k$ in $(\tilde{G}_m^n)_{n\geq 1}$. Newman \cite{Newmannew} analyzes this model by using the method of
                 	master-equations for the case $k_0=1$,  and  finds the same system as in \cite{Newman2005} for the analysis of the II-PA model.
                 	Thus, he obtains the same limit solution for  the proportion of vertices with in-degree $k$, as in the II-PA model, i.e., 
					\begin{equation}
						\label{Pricelimit}
						\lim_{n\longrightarrow\infty} \frac{\vec N_{k,n}}{n} =\frac{(1+1/m)\Gamma(k)\Gamma(2+1/m)}{\Gamma
						(k + 2 + 1/m)}=(1 + 1/m)B(k, 2 + 1/m).
					\end{equation} 
					A rigorous analysis of (\ref{Pricelimit}) can be made using Chebyschev's inequality and following the same lines as
					in the proof of Theorem \ref{newman} for the II-PA model (see Section \ref{PriceApprox}).   
				\end{enumerate}

		\subsection{Barab\'asi--Albert model}
		
			\label{BAMDesc} 
 			\begin{enumerate} 
				\item \textbf{Mathematical description:}
					In \cite{Bollobas2001}, Bollob\'as, Riordan, Spencer, and Tusnady  make the  Barab\'asi and Albert model
					precise in terms of a random graph process.  We follow their description in this paragraph. Add at each time step
					a new vertex with $m$, $m\in\bbZ^+$,  different directed edges.
					For the case $m=1$, let   $(G_1^t)_{t\geq1}$ be a random graph process  so that $G_1^t$ is a directed graph which
					starts at time $t=1$ with one vertex $v_1$ and one loop. Then, given  $G_1^{t}$ form $G_1^{t+1}$ by adding the vertex
					$v_{t+1}$ together with a single edge directed from $v_{t+1}$ to $v_j$, $1\leq j\leq t+1$,  with probability 
					\begin{align}
						\label{Bollobasm1}
						\mathbb{P}(v_{t+1}\longrightarrow v_j) =
						\begin{cases}
							\frac{d(v_j,t)}{2t+1}, & 1\leq j\leq t,\\
							\frac{1}{2t+1}, & j=t+1.
						\end{cases}
					\end{align}
					For $m>1$ define the process $(G_m^t)_{t\geq1}$ by running the process $(G_1^t)$ on the sequence  of imaginary vertices
					$v_1',v_2',\dots$, then form the graph $G_m^t$  from $G_1^{mt}$ by identifying the vertices
					$v_1',v_2',\dots,v_m'$ to form $v_1$,  $v_{m+1}',v_{m+2}'\dots,v_{2m}'$ to form $v_2$ and so on. 

					We can also define this model in a similar manner as we did for the  II-PA model. Thinking once more that the
					time increases  with a scaling of $1/(m+1)$, then  let us define  the process $(G_m^t)_{t\geq 1}$,
					such that for every $n\in \bbZ^+\cup\{0\}$,
					\begin{enumerate}
						\item  at time $t=n(m+1)+1$  add a new vertex $v_{n+1}$, 
						\item  for $i=2,\dots,m+1$ at each time $t=n(m+1)+i$  add an edge from $v_{n+1}$ to  $v$,
							where $v$ is chosen with
							\begin{align}
								\label{BA}
								\mathbb{P}(v_{n+1} \longrightarrow v) =
								\begin{cases}
									\frac{d(v,t-1)}{2(mn+i-1)-1}, &  v\neq v_{n+1}, \\
									\frac{d(v,t-1)+1}{2(mn+i-1)-1}, & v=v_{n+1}.
								\end{cases}
							\end{align}
							Observe that $(G_m^t)_{t\geq 1}$ starts at time $t=1$ just with a single vertex, without loops.
					\end{enumerate}

				\item \textbf{Historical context:}
					Barab\'asi and Albert, in \cite{Barabasi1999} proposed a random graph model of the growth of the world wide web,
					where the vertices represent sites or web pages, and the edges links between sites.  In this process the
					vertices are added to the graph one at a time and joined to a fixed number of earlier vertices, selected with
					probability proportional to their degree. This preferential attachment assumption is originated from the idea
					that a new site is more likely to join popular sites than disregarded sites. The model is described as follows.
					\begin{quotation}
						\noindent \textit{Starting with a small number $(m_0)$ of vertices,
						at every time step  add a new vertex with $m (\leq m_0)$
						edges that link the new vertex to $m$ different vertices already present in the system.
						To incorporate preferential attachment,  assume that the probability  that a new vertex will be connected
						to a vertex $i$ depends on the connectivity $k_i$ of that vertex, so it would be equal to $k_i/\sum_{j}k_j$.
						Thus, after $t$ steps the model leads to a random network with $t+m_0$ vertices and $mt$ edges.} 
					\end{quotation}

					To write a mathematical description of  the process given above it is necessary to clarify some details.
					First, since the model starts with $m_0$ vertices and none edges, then  the vertices degree are initially zero,
					so the probability  that the new vertex is connected to a vertex $i$, $1\leq i\leq m_0$, is not well defined.
					Second, to link the new vertex to $m$ different vertices already present, it should be necessary to repeat $m$
					times the experiment of choosing an old vertex, but the model does not say anything on changes of attachment
					probabilities at each time, i.e.\ it is not explained if the $m$ old vertices are simultaneously or sequentially chosen.
					These observations were made by  Bollob\'as, Riordan, Spencer, and Tusnady in \cite{Bollobas2001}, where after noted the
					problems in the Barab\'asi--Albert model, they give an exact definition of a random graph process that fits to that
					description.

				\item \textbf{Available results:}
					In \cite{Barabasi1999}, Barab\'asi and Albert  obtain through simulation  that after many time steps
					the proportion of vertices with degree $k$ obeys a power law $Ck^{-\gamma}$, where $C$ is a constant and
					$\gamma=2.9\pm 0.1$, and by a heuristic argument they suggest that $\gamma=3$.

					Let $N_{k,t}$ be the number of vertices with degree equal to $k$ in  $(G_m^t)_{t\geq 1}$.
					In \cite{Bollobas2001}  Bollob\'as, Riordan, Spencer, and Tusnady analyzed mathematically this model.
					Their first result is that, for $t=n(m+1)$, i.e., when the total number of vertices is $n$,
					and  $m\leq k\leq m+n^{1/15}$ (the bound $k\leq m+n^{1/15}$ is chosen to make the proof as easy as possible),
					\begin{align*}
						\frac{\bbE N_{k,t}}{n} \sim \frac{2m(m+1)}{k(k+1)(k+2)}=\alpha_{k},
					\end{align*}
					uniformly in $k$. 

					The authors consider $\mathcal{F}_s$, the $\sigma$-field generated by the appearance of directed edges up to time $s$,
					$s\leq t$, and define  $Z_s = \bbE(N_{k,t}|F_s)$ and see it is a martingale satisfying $|Z_s -Z_{s-1}| \leq 2$,
					$Z_t=N_{k,t}$ and $Z_0=\bbE N_{k,t}$, $k=1,2,\dots$ (at time $t=0$ the random graph is the empty graph).  
					Using Azuma--Hoeffding inequality (\ref{AzHoff}) they obtain that
					\begin{align*}
						\mathbb{P}\Big(\Big|\frac{N_{k,t}}{n}-\frac{\bbE N_{k,t}}{n}\Big|\geq \sqrt{\ln t/t}\Big)\leq
						\exp\Big(-\frac{\ln t}{8}\Big)\longrightarrow0,
					\end{align*} 
					as $t$ goes to infinity. Hence, it follows that, for every $k$ in the range $m\leq k\leq m+n^{1/15}$,
					\begin{align*}
						\frac{N_{k,t}}{n}\longrightarrow \alpha_{k},
					\end{align*}
					in probability. Thus, the proportion  of vertices with degree $k$, 
					\begin{equation}
						\label{Bollobas} 
						\frac{N_{k,t}}{n}\longrightarrow m(m+1)B(k,3)
					\end{equation}
					in probability as $t\longrightarrow\infty$. Note that
					\begin{align*}
						\frac{2m(m+1)}{k(k+1)(k+2)}=m(m+1)B(k,3).
					\end{align*}
					Furthermore, since the Beta function  satisfies the asymptotics $B(x,y)\longrightarrow x^{-y}$ for $x$ large enough,
					then $N_{k,t}/n\sim m(m+1)k^{-3}$ as $k\longrightarrow\infty$  and obeys a power law for large values of $k$,
					with $\gamma=3$ as  Barab\'asi and Albert  suggested.
					Hence, it is proved mathematically that when vertices are added to the graph one at a time and joined to a fixed number
					of existing vertices selected with probability proportional to their degree, the degree distribution follows
					a power law behavior \textit{only} in the tail (for $k$ big enough), with an exponent $\gamma=3$.
					A second proof of this result is given in \cite{RemcoBook} (see Theorem 8.2).
			\end{enumerate}
		
		\subsection{Yule model}
			
			\label{YuleDef}
			Differently from the previous models, this model evolves in continuous time. We do not describe this model in terms of
			random graph processes, however in subsection \ref{Relation2}  we discuss its relation with Simon model and conclude that
			the Yule model can be interpreted as a continuous time limit of Simon model, a model with a random graph interpretation.
			
			\begin{enumerate}
				\item \textbf{Mathematical description:}
					In the description of the Yule model we use $T$  to denote continuous time, instead of $t$ that denotes  discrete time,
					i.e.\ $T\in\mathbb{R}^+\cup\{0\}$ and $t\in\mathbb{Z}^+$.

					Consider a population starting at time $T=0$ with one individual. As time increases, individuals may give birth to
					new individuals independent of each other  at a constant rate $\lambda>0$, i.e., during any short time interval of
					length $h$ each member has probability $\lambda h+o(h)$ to create an offspring. Since there is no interaction among the
					individuals, then if  at epoch $T$ the population size is $k$,  the probability that an increase takes place at some
					time between $T$ and $T+h$ equals $k\lambda h+o(h)$.
					Formally, let $N(T)$ be the number of individuals at time $T$ with $N(0)=1$, then  if  $N(T)=k$, $k\geq1$, the probability of a
					new birth in $(T,T+h)$ is $k\lambda h+o(h)$, and the probability of more than one birth is $o(h)$, i.e.,  
					\begin{align*}
						\bbP(N(T+h)=k+\ell\mid N(T)=k)=
						\begin{cases}
							k\lambda h+o(h), & \ell=1,\\
							o(h), & \ell>1,\\
							1-k\lambda h+o(h), & \ell=0.
						\end{cases}
					\end{align*}

					Thus, $\{N(T)\}_{T\geq0}$ is a pure birth process and with the initial condition $\bbP(N(0)=k)=\delta_{k,1}$;
					this linear birth process is called the  \textit{Yule process}.   

					Consider now two independent Yule processes, $\{N_{\beta}(T)\}_{T\geq0}$ and $\{N_{\lambda}(T)\}_{T\geq0}$,
					with parameters   $\beta>0$ and $\lambda>0$ respectively,  such that when a new
					individual appears in the process with parameter $\beta$, a new Yule process with parameter $\lambda$ starts.
					In a random graph context,  a Yule model can be characterized through Yule processes of different parameters
					as described in the following.
					The first Yule process denoted by $\{N_{\beta}(T)\}_{T\geq0}$, $\beta>0$, accounts for the growth of the number
					of vertices. As soon as the first vertex is created, a second Yule process, $\{N_{\lambda}(T)\}_{T\geq0}$, $\lambda>0$,
					starts describing the creation of in-links to the vertex. The evolution of the number of in-links for the successively
					created vertices, proceeds similarly. Specifically, for each of the subsequent created vertices, an independent
					copy of   $\{N_{\lambda}(T)\}_{T\geq0}$, modeling the appearance of the in-links is initiated. 
 
					Let us define   $Y_0=0$ and for $k\geq 1$,
					\begin{align*}
						Y_k=\inf\{T \colon N_{\lambda}(T)=k+1\},
					\end{align*}
					so that $Y_k$ is the time of the $k$th birth, and   $W^*_k=Y_k-Y_{k-1}$ is the waiting time between the $(k-1)$th and the
					$k$th birth.  In a  Yule process it is well-known  that the waiting times  $W^*_k$, $k\geq 1$, are independent,
					each exponentially distributed with parameter $\lambda k$.

					Conversely, it is possible to reconstruct $\{N_{\lambda}(T)\}_{T\geq 0}$ from the knowledge of the $W^*_j$, $j\geq 1$,
					by defining
					\begin{align}
						\label{lastconst}
						Y_k=\sum_{j=1}^{k}W^*_j, \qquad \quad N_{\lambda}(T)=\min\{k \colon Y_{k}> T\}.
					\end{align}
					Thus if the $W^*_j$ are independently  distributed exponential random variables,
					of parameter $\lambda j$, then  $\{N_{\lambda}(T)\}_{T\geq 0}$ is a Yule process of parameter $\lambda$.

				\item \textbf{Historical context:}
					Yule in \cite{Yule1925} observed that the distribution of species per genus in the evolution of a biological population
					typically presents a power law behavior, thus, he proposed a stochastic model to fit these data.
					In the original paper \cite{Yule1925} the process is described as follows:
					\begin{quotation}
						\noindent \textit{Let the chance of a species ``throwing'' a specific mutation, i.e., a new species of the same genus,
						in some small assigned interval of time be $p$, and suppose the interval so small that $p^2$ may be ignored compared
						with $p$. Then, putting aside generic mutations altogether for the present, if we start  with $N$ prime species
						of different genera, at the end of the interval we will have $N(1-p)$  which remain monotype and $Np$ genera of two species.
						The new species as well as the old can now throw specific mutation.}
					\end{quotation}						
					Yule proceeded to the limit, taking the time interval $\Delta T$ as indefinitely but the number of such intervals
					$n$ as large, so that $n\Delta T=T$ is finite, and he wrote $p=\lambda\Delta T=\lambda T/n$.
					Yule not only studied this process.   In \cite{Yule1925}, he furthermore  constructed a model of evolution by considering
					two independent Yule processes, one for species with a constant rate $\lambda>0$ and the other for new genera (each of them
					composed by a single species) created at a constant rate $\beta>0$. In other words,  at time $T=0$ the process starts
					with a single genus composed by a single species.  As time goes on,  new genera (each composed by a single species) develop
					as  a Yule process of parameter $\beta$, and simultaneously and independently new species evolve as a Yule process with rate
					$\lambda$. Furthermore,  since a new genus appears with a single species, then each time a  genus births,
					a Yule process with rate $\lambda$ starts.    
 
				\item \textbf{Available results:} 
					Let $N_g(T)$ and $N_s(T)$, $T\geq0$, be the counting processes measuring the number of genera and species created until time
					$T$, respectively.  It is well-known that the probability distribution of the number of individuals in a Yule process with
					parameter $\lambda$ is geometric, $\text{Geo}(e^{-\lambda T})$. Thus, the distribution of the number of species  $N_s(T)$
					in a genus during the interval of time $[0,T]$ is
					\begin{equation}
						\label{distNt}
						\bbP(N_s(T)=k)=e^{-\lambda T}(1-e^{-\lambda T})^{k-1}, \qquad k\geq1,\: T\geq0. 
					\end{equation} 
					On the other hand, it is also known that by conditioning on the number of genera present
					at time $T$, the random instants at which creation of novel genera occurs are distributed as the order statistics of $iid$
					random variables with distribution function
					\begin{equation}
						\label{orderstat}
						\bbP(\mathcal{T}\leq\tau)=\frac{e^{\beta\tau}-1}{e^{\beta t}-1},\qquad 0\leq\tau\leq t
					\end{equation}
					(see \cite{LaskiPolito14} and the references therein).
					The authors in \cite{LaskiPolito14} take into account that the  homogeneous linear pure birth process lies in the class
					of the so-called processes with the order statistic property, see \cite{Neuts}, and use  \cite{Crump,Feigin,Puri}
					and \cite{Thompson} to get  (\ref{orderstat}).

					Thus, let $\mathcal{N}_T$ be the size of a genus chosen uniformly at random at time $T$. Then,
					\begin{align}
						\bbP(\mathcal{N}_T=k) & = \int_0^T\bbP(N_s(T)=k\mid N_s(\tau)=1)\bbP(\mathcal{T}\in d\tau)\nonumber\\
						&= \int_0^T e^{-\lambda(T-\tau)}(1-e^{-\lambda(T-\tau)})^{k-1}\beta\frac{e^{\beta\tau}}{e^{\beta T}-1}d\tau\nonumber\\
						&= \frac{\beta}{1-e^{-\beta T}}\int_0^T e^{-\beta y}e^{-\lambda y}(1-e^{-\lambda y})^{k-1} dy.
					\end{align}
					The interest now is in the limit behavior when $T\longrightarrow\infty$:
					\begin{align}
						\lim_{T\longrightarrow\infty} \bbP(\mathcal{N}_T=k) = \beta\int_0^{\infty}
						e^{-\beta y}e^{-\lambda y}(1-e^{-\lambda y})^{k-1} dy.
					\end{align}
					Letting $\rho=\beta/\lambda$ it is possible to recognize the integral as a beta integral to obtain
					(see \cite{Yule1925}, page 39)
					\begin{align}
						\lim_{T\longrightarrow\infty}\bbP(\mathcal{N}_T=k)=\rho\frac{\Gamma(k)\Gamma(1+\rho)}{\Gamma(k+1+\rho)}
						=\rho B(k,1+\rho),\qquad k\geq1.
					\end{align}
			\end{enumerate} 

	\section{Main Results}

		\label{princip}
		\subsection{Relations between the four discrete-time models}

			In \cite{PhysRevE.64.035104}, Bornholdt and Ebel pointed out that the asymptotic power law of the Barab\'asi--Albert
			model with $m=1$ coincides with that of the Simon model characterized by $\alpha=1/2$ (see (\ref{simondist}) and (\ref{Bollobas})).
			From this observation they suggested that for $m=1$ Barab\'asi--Albert model could be mapped to the subclass of Simon models
			with $\alpha=1/2$.
 
			We think that  (\ref{simondist}) and (\ref{Bollobas}) cannot be compared since they  give the asymptotic of different random variables.
			Indeed the first refers to the in-degree and the second to the degree distribution.   However we do believe that may exist a
			relation between these two models, which needs to be explained. 
 
			In this section we discuss rigorous arguments that allow us to clarify the relation between  Simon and Barab\'asi--Albert models.
			To this aim we make use also  of  the II-PA model.  We first relate Barab\'asi--Albert and II-PA models and then II-PA and Simon models.
			This double step is made necessary by the different quantities described by these models.
			
			Now, we are ready to formulate our results. 

			\begin{teo}
				\label{m=1}
				Let $m=1$. Then, the  in-degree distribution of the II-PA model  and the degree distribution of the  Barab\'asi--Albert model
				at time $t$, $t\geq2$, are  the same, i.e., if at time $t$ there are $n$ vertices in the processes,
				then for any $k\in\bbZ^+$,
				\begin{align*}
					\frac{\vec N_{k,t}^{\textup{II-PA}}}{n}=\frac{N_{k,t}^{\textup{BA}}}{n},
				\end{align*}
				where  $\vec N_{k,t}^{\textup{II-PA}}$ and $N_{k,t}^{\textup{BA}}$ denote the number of vertices with
				in-degree and degree equal to $k$ in $(\tilde G_1^t)$
				and $(G_1^t)$ at time $t$, i.e., in the II-PA and Barab\'asi--Albert models, respectively. 
			\end{teo}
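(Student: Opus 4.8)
The plan is to set up a coupling between the II-PA process $(\tilde G_1^t)$ and the Barab\'asi--Albert process $(G_1^t)$ for $m=1$ and to argue that, up to the identification of a loop contributing to the in-degree in the former with a loop contributing $1$ to the degree in the latter, the two processes have identical law for the quantity $\vec d(v_j,\cdot)$ versus $d(v_j,\cdot)$ at each time. First I would make the time scales compatible: with $m=1$ the II-PA process has $m+1=2$ scaling steps per vertex, so at time $t=2n+1$ a new vertex $v_{n+1}$ arrives with a directed loop, and at $t=2n+2$ one preferential-attachment edge is added according to \eqref{IIPAM}. The BA process $(G_1^t)$ in its one-step formulation adds vertex $v_{t+1}$ together with one edge at each step via \eqref{Bollobasm1}, so the two processes live on slightly different clocks; I would therefore compare $(\tilde G_1^{2t})$ (i.e.\ the II-PA graph after $t$ complete vertices) with $G_1^{t}$, matching the number of vertices $n$ in both, as the statement itself does.

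The key observation driving the proof is the following bookkeeping identity for $m=1$. In the II-PA model each vertex $v_j$ (for $j\ge 2$) is born with one directed loop (counting $1$ toward its own in-degree) and emits exactly one outgoing edge that does \emph{not} touch its own in-degree; vertex $v_1$ is born with a directed loop as well. Hence at the moment $v_{n+1}$ would receive its outgoing edge, the total in-degree mass is $\sum_j \vec d(v_j,t-1) = t-1$, which is exactly the denominator in \eqref{IIPAM}. In the BA model with $m=1$, vertex $v_{t+1}$ is born and emits one edge, contributing $1$ to its own degree on creation and $1$ more if the edge is a self-loop; the normalizing constant is $2t+1$, the total degree after the new vertex's stub is placed. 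The claim is that the map ``in-degree of $v_j$ in II-PA'' $\mapsto$ ``degree of $v_j$ in BA'' is a bijection preserving the dynamics: a vertex of in-degree $k$ in the II-PA graph corresponds to a vertex of degree $k$ in the BA graph because in BA each vertex $v_j$ with $j\ge 2$ carries exactly one outgoing stub, so $d(v_j,t) = \vec d(v_j,t) + 1$ would be wrong — instead I must check carefully that the \emph{in-degree} in II-PA and the \emph{degree} in BA evolve by the \emph{same} transition probabilities. I would do this by induction on $t$: assuming the two graphs have been coupled so that the vector of in-degrees in $\tilde G_1^{2t}$ equals the vector of degrees in $G_1^{t}$, I show the next vertex addition preserves this. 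When $v_{n+1}$ arrives in II-PA it brings in-degree $1$ (its loop); when $v_{t+1}$ arrives in BA it brings degree $1$ via \eqref{Bollobasm1} with probability $\tfrac{1}{2t+1}$ of being a self-loop (degree $2$) and otherwise degree $1$ plus an increment to some old $v_j$'s degree. Comparing \eqref{IIPAM} at $t-1 = 2n+1 = 2t$... — here one must be scrupulous with the index shift — against \eqref{Bollobasm1} with denominator $2t+1$, and using the inductive hypothesis that $\vec d(v_j,\cdot)=d(v_j,\cdot)$ and that $\sum_j \vec d(v_j,\cdot)$ equals the relevant total, the two attachment rules coincide term by term. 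Therefore the coupled processes keep $\vec N_{k,\cdot}^{\textup{II-PA}} = N_{k,\cdot}^{\textup{BA}}$ for every $k$, and since these are deterministic functions once we are comparing laws, the distributional identity in the statement follows.

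The main obstacle I anticipate is the exact matching of the normalizing denominators across the two time parametrizations: \eqref{IIPAM} uses $1/(t-1)$ with $t=n(m+1)+i$, while \eqref{Bollobasm1} uses $1/(2t+1)$ and the alternative BA formulation \eqref{BA} uses $1/(2(mn+i-1)-1)$, and these must be reconciled with the fact that II-PA has directed loops contributing to in-degree whereas BA's $m=1$ rule has its own self-loop probability $\tfrac{1}{2t+1}$. I would resolve this by writing out both graphs explicitly for small $t$ (e.g.\ $t=2,3,4$) to pin down the correspondence of initial conditions — note the II-PA graph at $t=1$ has a vertex with a directed loop while the BA graph $(G_1^1)$ has a vertex with ``one loop'' as well — and then checking that the per-step total in-degree in II-PA (which is the discrete time index itself, by the loop bookkeeping) plays the role that $2t$ (total degree) plays in BA after accounting for the newly added stub. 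Once this denominator identity is nailed down, the induction is routine, and the theorem follows; I would also remark that the restriction $t\ge 2$ in the statement is exactly what is needed for the preferential-attachment step to have already occurred at least once so that the two graphs are non-trivial.
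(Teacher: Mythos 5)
Your proposal is correct and follows essentially the same route as the paper: both arguments reduce the theorem to showing that the attachment probabilities \eqref{IIPAM} and the Barab\'asi--Albert rule coincide step by step, via the bookkeeping identity that the birth loop of a vertex in the II-PA model (contributing $1$ to its in-degree) plays exactly the role of the outgoing stub in the Barab\'asi--Albert model (contributing $1$ to its degree), so that $\vec d(v_j,\cdot)=d(v_j,\cdot)$ for every vertex and the common denominator is $2n+1$. The only cosmetic difference is that the paper compares against the two-sub-step reformulation \eqref{BA} rather than the one-step rule \eqref{Bollobasm1}, and verifies the numerator identity by directly decomposing each vertex's (in-)degree into ``own edge at birth time'' plus ``later incoming edges'' instead of running a formal induction.
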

			\begin{proof}
				We follow  the mathematical description of the II-PA and Barab\'asi--Albert models in terms of
				the random graph processes $(\tilde{G}_m^t)_{t\geq 1}$ and $({G}_m^t)_{t\geq 1}$, presented in
				Sections \ref{IIPAMDesc} and \ref{BAMDesc}, respectively. Let us  divide each unit of time in two sub-units.
				At each instant of time $t=2n+1$ a new vertex $v_{n+1}$ is created in both models; in the  II-PA model this vertex is created
				together with a directed loop. Furthermore, at each time $t=2n+2=2(n+1)$ an edge (a directed edge in the II-PA model)
				is added from $v_{n+1}$ to $v_j$,  $j\leq n+1$, with probabilities given by (\ref{IIPAM}) and (\ref{BA}) for the
				II-PA and  Barab\'asi--Albert models, respectively.  Hence our thesis corresponds to show that (\ref{IIPAM})
				and (\ref{BA}) coincide under our hypotheses.

				We see that the denominator for both probabilities (\ref{IIPAM}) and (\ref{BA}) is $2n+1$, and although  the two numerators
				count different quantities, the in-degree for the II-PA and the degree for Barab\'asi--Albert models,
				their values also coincide. This is easy  to check when $v_j=v_{n+1}$ and  the directed edge created at time $t=2(n+1)$
				is to $v_{n+1}$.
				In fact  the numerators of (\ref{IIPAM}) and (\ref{BA}) become both one. Let us now show that the two numerators coincide
				also when $v_j \neq v_{n+1}$.

				Let us suppose  $v_j\neq v_{n+1}$, and let $t=2(n+1)-1=2n+1$. Observe that in the Barab\'asi--Albert model the degree of $v_j$
				at time $t=2n+1$,  $d(v_j,2n+1)$, $j\in\{1,2,\dots,n\}$,   is the sum of  the number of incoming edges from time
				$t=2j+1$ (when $v_{j+1}$ is added) to time  $2n+1$,   plus the degree corresponding to the edge added at time
				$t=2j$ from $v_j$, that is two if the edge was a loop and one otherwise.  On the other hand, in the II-PA model the in-degree
				of $v_j$ at time $t=2n+1$,  $\vec d(v_j,2n+1)$, $j\in\{1,2,\dots,n\}$ is  the sum of the number of incoming edges added in the
				interval of time $t \in [2j+1,2n+1]$ (so this part coincides with  Barab\'asi--Albert model), plus the in-degree corresponding
				to the directed edge added at time $t=2j$ from $v_j$.
				Thus,  if it is a directed loop to $v_j$,  the in-degree of $v_j$ at time $t=2j$ is two (since when $v_j$ appeared,
				it did together with a  directed loop),  otherwise the in-degree is one.
				This concludes the proof and  (\ref{IIPAM}) and (\ref{BA}) coincide.
			\end{proof}

			\begin{obs}
				The proof of Theorem \ref{m=1} enlightens the advantage given by the re-definition of existing models in terms of
				random graph processes. In particular this reading shows immediately that the two models can be related only when $m=1$.  
			\end{obs}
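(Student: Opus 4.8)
The substance of the remark is the \emph{necessity} of $m=1$: I must show that for $m\geq 2$ the finite-time identity of Theorem \ref{m=1} fails, so that the in-degree distribution of the II-PA model and the degree distribution of the Barab\'asi--Albert model genuinely differ. The plan is to trace the two coincidences that forced (\ref{IIPAM}) and (\ref{BA}) to agree in the proof of Theorem \ref{m=1}, observe that each is special to $m=1$, and then exhibit an explicit finite-time configuration witnessing a real discrepancy when $m\geq 2$.

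First I would isolate the two coincidences underlying the coupling. The denominators of (\ref{IIPAM}) and (\ref{BA}) at the sub-step $t=n(m+1)+i$ are $t-1=n(m+1)+i-1$ and $2(mn+i-1)-1=2mn+2i-3$, respectively. Equating them yields $n(m-1)=2-i$. For $i\in\{2,\dots,m+1\}$ one has $2-i\leq 0$, whereas $n(m-1)\geq 0$; when $n\geq 1$ and $m\geq 2$ the left-hand side is at least $m-1\geq 1$, so equality is impossible and the two normalizations disagree at every edge-adding step. The numerators carry a parallel obstruction: in the Barab\'asi--Albert model each vertex contributes the $m$ edges it emits to its own \emph{degree}, while in the II-PA model the single creation loop contributes only $1$ to its \emph{in-degree}, so the degree of a vertex exceeds its II-PA in-degree by a constant offset $m-1$, which vanishes exactly when $m=1$.

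To turn this failed coupling into a genuine distributional separation I would exhibit the smallest witness, the first block $n=0$ of the construction (times $t=1,\dots,m+1$), which creates the single vertex $v_1$. Since no other vertex yet exists, every one of the $m$ preferential edges is a \emph{forced} self-loop in both models, so the evolution is deterministic and can be read off directly. In the II-PA model $v_1$ is born with a directed loop (in-degree $1$) and acquires $m$ further directed self-loops, ending with in-degree $m+1$. In the Barab\'asi--Albert model $v_1$ is born with degree $0$, and each of the $m$ forced self-loops raises its degree by $2$, ending with degree $2m$. For $m\geq 2$ we have $2m\neq m+1$, so at time $t=m+1$, when there is a single vertex, the two sequences already differ: $\vec N_{m+1,\,m+1}^{\textup{II-PA}}=1$ while $N_{m+1,\,m+1}^{\textup{BA}}=0$. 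This contradicts the equality of Theorem \ref{m=1} and shows the two models can be related only for $m=1$.

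The step I expect to be delicate is the last one, not its arithmetic but its logic: a mismatch of transition probabilities does not by itself force a mismatch of the resulting laws, since distinct Markovian dynamics may share marginals. The deterministic $n=0$ block is precisely what removes this gap, producing two provably different degree sequences and thereby an \emph{immediate} witness, in the spirit of the remark, that the random-graph reading pinpoints $m=1$ as the only case in which the identity can hold.
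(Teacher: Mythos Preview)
Your argument is correct. The paper does not supply a separate proof for this remark; its justification appears in the remark that immediately follows, where the authors take $m=2$, consider a newly born vertex $v_{n+1}$ whose first preferential edge is assumed not to be a self-loop, and observe that the numerator of (\ref{IIPAM}) (the in-degree of $v_{n+1}$) then differs from the numerator of (\ref{BA}) (the degree of $v_{n+1}$), so the two transition rules disagree at the next sub-step.

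Your route is in the same spirit but both more systematic and cleaner. You treat all $m\geq 2$ at once rather than only $m=2$; you separate the denominator obstruction (the equation $n(m-1)=2-i$ has no admissible solution) from the numerator obstruction (the constant offset $m-1$ between BA-degree and II-PA-in-degree under a hypothetical common trajectory); and, crucially, you close the logical gap you yourself flag---that different transition kernels need not yield different marginals---by exhibiting the \emph{deterministic} first block $n=0$, where both processes are forced to self-loop and end with $d(v_1)=2m$ versus $\vec d(v_1)=m+1$. The paper's example is instead conditional on a particular random choice and, as written, only shows a mismatch of transition probabilities rather than of the resulting degree/in-degree counts. Your deterministic witness is therefore a genuine improvement on the paper's argument.
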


			\begin{obs}
				The  in-degree distribution of the II-PA model  and the degree distribution of the  Barab\'asi--Albert model  are different
				when $m>1$. In fact take for example $m=2$ and suppose the first directed edge from $v_{n+1}$ is not a loop,
				i.e., a vertex $v_j$, $1\leq j\leq n$ is chosen.  Then, at time $t=3n+1$, $\vec d(v_{n+1},3n+1)=1$ in the II-PA model, while
				$d(v_{n+1},3n+1)=2$ in the Barab\'asi--Albert model. Thus at time $t=3n+2$,
				(\ref{IIPAM}) and (\ref{BA}) are different because the corresponding numerators differ. 
			\end{obs}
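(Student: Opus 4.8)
The plan is to combine a local, mechanistic explanation of why the coincidence of Theorem \ref{m=1} is special to $m=1$ with a short global degree-counting argument that converts the observed mismatch into a genuine difference of the two distributions. The local part follows the bookkeeping used in the proof of Theorem \ref{m=1}; the rigorous conclusion rests on comparing the mean of the in-degree distribution of the II-PA model with the mean of the degree distribution of the Barab\'asi--Albert model.

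First I would isolate the mechanism, keeping the subdivision of each super-step into one creation step and $m$ emission steps. When a vertex $v_{n+1}$ is created, the II-PA model gives it a directed loop, so $\vec d(v_{n+1})=1$, whereas in the Barab\'asi--Albert model it carries no loop, $d(v_{n+1})=0$. After the first emission the two counts agree: a loop raises $\vec d(v_{n+1})$ by $1$ and $d(v_{n+1})$ by $2$, while a non-loop raises $\vec d(v_{n+1})$ by $0$ and $d(v_{n+1})$ by $1$, so in either case the initial surplus of the II-PA loop is exactly cancelled and $\vec d(v_{n+1})=d(v_{n+1})$ holds after one emission. This is precisely the coincidence exploited for $m=1$, where a single emission occurs. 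For $m\ge 2$ each further emission raises $d(v_{n+1})$ by one more than $\vec d(v_{n+1})$, so from the second emission onward the quantities feeding \eqref{IIPAM} and \eqref{BA} diverge; concretely, with $m=2$ and a non-loop first emission, at the second emission the self-attachment numerator of \eqref{IIPAM} is $\vec d(v_{n+1})=1$ while that of \eqref{BA} is $d(v_{n+1})+1=2$. This is the obstruction that breaks down the coupling of Theorem \ref{m=1}.

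To turn this into a statement about the distributions I would count directed-edge heads and total degree globally, which is the clean and fully rigorous step. Up to time $t=n(m+1)$ the II-PA model has $n$ vertices and has added exactly one directed edge (a creation loop or an emitted edge) at each of the $t$ steps, each contributing one to the total in-degree; hence $\sum_k k\,\vec N_{k,t}^{\textup{II-PA}}=n(m+1)$ deterministically, so the empirical in-degree distribution $\{\vec N_{k,t}^{\textup{II-PA}}/n\}_k$ has mean $m+1$. In the Barab\'asi--Albert model the creation steps add no edge while each of the $nm$ emissions adds one edge contributing two to the total degree, so $\sum_k k\,N_{k,t}^{\textup{BA}}=2nm$ deterministically and the empirical degree distribution $\{N_{k,t}^{\textup{BA}}/n\}_k$ has mean $2m$. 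These means are deterministic constants that coincide exactly when $m+1=2m$, i.e.\ $m=1$, and differ for every $m>1$; a distributional quantity forced to take different values shows that the two distributions are distinct, at every super-step boundary and in the limit. As a cross-check, the known limiting laws \eqref{West} and \eqref{Bollobas} have means $m+1$ and $2m$, respectively.

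The main obstacle, and the reason the local computation of the Remark is not by itself conclusive, is that differing one-step attachment laws need not produce differing marginal in-degree or degree distributions: in principle the step-by-step discrepancies could average out. The degree-sum argument circumvents this by exhibiting an invariant of each distribution, its mean, pinned to different deterministic values once $m>1$. The delicate point is thus not any computation but the choice of a functional robust enough to separate the two laws without having to track the full joint evolution of the two graphs, which for $m\ge 2$ genuinely diverges.
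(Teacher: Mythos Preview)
Your local analysis of the numerators essentially mirrors the paper's own justification: the Remark in the paper is argued precisely by displaying, for $m=2$ and a non-loop first emission, that $\vec d(v_{n+1})$ and $d(v_{n+1})$ no longer coincide, so the numerators of \eqref{IIPAM} and \eqref{BA} differ at the next step. So on that front you and the paper agree.

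Where you diverge, and usefully so, is in your global degree-sum argument. The paper's Remark stops at ``the attachment probabilities differ'', which, as you correctly observe, does not by itself force the resulting in-degree and degree distributions to differ. Your observation that $\sum_k k\,\vec N_{k,t}^{\textup{II-PA}}=n(m+1)$ and $\sum_k k\,N_{k,t}^{\textup{BA}}=2nm$ are deterministic, so the empirical means are $m+1$ versus $2m$, gives an immediate and rigorous separation for all $m>1$, at every super-step boundary and in the limit (your cross-check against \eqref{West} and \eqref{Bollobas} is correct: the limiting means are indeed $m+1$ and $2m$). This is a genuine strengthening: it converts the paper's heuristic ``the coupling breaks'' into an unconditional proof that the two empirical laws cannot coincide. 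The price is modest---one edge-counting identity per model---and the payoff is that you sidestep entirely the delicate issue of whether step-by-step discrepancies might cancel. One small addendum you could mention: for $m>1$ the denominators of \eqref{IIPAM} and \eqref{BA} already disagree (they are $t-1$ and $2(mn+i-1)-1$, which coincide only for $m=1$), so the local mismatch is even more pervasive than the numerator comparison alone suggests; this does not affect your argument but reinforces the point.
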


			Next we discuss the relationship between Simon and the II-PA models, which allows us to relate
			Barab\'asi--Albert and Simon models. Before writing such a result, observe the following fact.
			Let $Y_i$ be a random variable that counts the number of direct edges originated in the Simon model
			by the $i$th vertex $v_i$, until the appearing of the $(i +1)$th vertex. Note that $Y_i$ follows a Geometric
			distribution with parameter $\alpha$. So, if $\alpha = 1/(m +1)$, then $\bbE Y_i = m$, and  that is the number
			of out-going links from a vertex in the the II-PA and Barab\'asi--Albert models.
			
			What we will establish in the following theorem is that the asymptotic in-degree distribution of the II-PA and Simon
			models coincide when $\alpha = 1/(m + 1)$. To do that, first we introduce the following definition.
				
			\begin{defn}
				We say that a vertex $v_i$   appears  ``complete'' when   it has appeared in the process together with all the directed edges
				originated from it.  Thus,  at time $t=n(m+1)$,  the II-PA  model has exactly $n$  ``complete'' vertices.
			\end{defn}
			Now we are ready to enunciate the theorem. 

			\begin{teo}
				\label{newman}
				Let $m\in \bbZ^+$ be fixed. If $(\tilde{G}_m^t)_{t\geq 1}$ is the random graph process defining
				the II-PA model, and $\vec N_{k,t}^{\textup{II-PA}}$ the number of vertices with in-degree equal  $k$, $k\geq1$,
				at time  $t$ in $(\tilde{G}_m^t)_{t\geq 1}$, 
				then, at time $t=n(m+1)$
				\begin{equation}
					\label{IIPam}
					\frac{\vec N_{k,t}^{\textup{II-PA}}}{n}\longrightarrow
					\frac{(1+1/m)\Gamma(k)\Gamma(2+1/m)}{\Gamma(k + 2 + 1/m)}
				\end{equation}
				almost surely as $n\longrightarrow\infty$.
			\end{teo}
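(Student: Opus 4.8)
The plan is to reproduce, for the II-PA process, the argument carried out for the Simon model in Section~\ref{Simonmodel}: first derive a recursion for the expected counts $M_{k,t}:=\bbE\vec N_{k,t}^{\textup{II-PA}}$, solve it to identify $\lim_{n\to\infty}M_{k,t}/n$ along $t=n(m+1)$, and then show that $\vec N_{k,t}^{\textup{II-PA}}$ concentrates around its mean by means of a Doob martingale and the Azuma--Hoeffding inequality~\eqref{AzHoff}, upgrading convergence in probability to almost sure convergence through Borel--Cantelli. For the recursion I would condition on the $\sigma$-field $\mathcal{F}_{t-1}$ generated by the evolution of $(\tilde{G}_m^t)$ up to time $t-1$. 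At a vertex step $t=n(m+1)+1$ the new vertex carries a directed loop, so $\vec N_{1,t}=\vec N_{1,t-1}+1$ and $\vec N_{k,t}=\vec N_{k,t-1}$ for $k\ge2$; at an edge step $t=n(m+1)+i$ with $2\le i\le m+1$, rule~\eqref{IIPAM} puts the new edge on a vertex of in-degree $k-1$ with conditional probability $(k-1)\vec N_{k-1,t-1}/(t-1)$ and on a vertex of in-degree $k$ with probability $k\vec N_{k,t-1}/(t-1)$. Taking expectations, at edge steps
\begin{align*}
M_{1,t}=\Big(1-\frac{1}{t-1}\Big)M_{1,t-1},\qquad M_{k,t}=M_{k,t-1}+\frac{(k-1)M_{k-1,t-1}-kM_{k,t-1}}{t-1}\quad(k\ge2),
\end{align*}
with $M_{1,1}=1$ and $M_{k,1}=0$ for $k\ge2$. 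Since one edge (counting one toward some in-degree) is added at each time step, the total in-degree of $(\tilde{G}_m^t)$ at time $t$ equals $t$, whence $k\,\vec N_{k,t}\le t$ pathwise; thus each edge step changes any $M_{k,\cdot}$ by at most $2$ and each vertex step by at most $1$. Consequently, over one block $n(m+1)\to(n+1)(m+1)$ all the quantities appearing on the right-hand sides equal $A_{k,n}:=M_{k,n(m+1)}$ (respectively $A_{k-1,n}$) up to $O(1)$, and all denominators equal $n(m+1)$ up to $O(1)$.

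Summing over the vertex step and the $m$ edge steps of one block then yields, for $k\ge2$,
\begin{align*}
A_{k,n+1}=\Big(1-\frac{mk}{(m+1)n}\Big)A_{k,n}+\frac{m(k-1)}{(m+1)n}\,A_{k-1,n}+O(1/n),
\end{align*}
and $A_{1,n+1}=(1-\frac{m}{(m+1)n})A_{1,n}+1+O(1/n)$ for $k=1$. Using the elementary fact that a recursion $a_{n+1}=(1-b/n+o(1/n))a_n+c_n$ with $b>0$ and $c_n\to c$ forces $a_n/n\to c/(1+b)$ (the additive $O(1/n)$ terms sum only to $O(\log n)$ and are killed after division by $n$ — exactly the computation used to pass from~\eqref{Sk1}--\eqref{Sk} to~\eqref{p1}--\eqref{pk}), an induction on $k$ gives $A_{k,n}/n\to\ell_k$ with $\ell_1=\frac{m+1}{2m+1}$ and $\ell_k=\frac{k-1}{k+1+1/m}\,\ell_{k-1}$. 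Since these are precisely the ratios of consecutive values of $(1+1/m)B(\,\cdot\,,2+1/m)$, one gets $\ell_k=(1+1/m)B(k,2+1/m)=\frac{(1+1/m)\Gamma(k)\Gamma(2+1/m)}{\Gamma(k+2+1/m)}$, i.e.\ $\bbE\vec N_{k,t}^{\textup{II-PA}}/n$ converges to the right-hand side of~\eqref{IIPam}.

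For the concentration, fix $k$ and $t=n(m+1)$ and set $Z_s=\bbE[\vec N_{k,t}^{\textup{II-PA}}\mid\mathcal{F}_s]$, $0\le s\le t$: this is a martingale with $Z_0=\bbE\vec N_{k,t}^{\textup{II-PA}}$ and $Z_t=\vec N_{k,t}^{\textup{II-PA}}$, and the only randomness revealed at an edge step is the endpoint of that edge. To bound $|Z_s-Z_{s-1}|$ I would couple two runs of the process that agree up to $\mathcal{F}_{s-1}$ and differ only in that endpoint, as in~\cite{Bollobas2001}: the discrepancy between the two in-degree configurations can be kept confined to exactly two vertices at all later times — each time a later preferential-attachment choice is forced to disagree under the coupling, one routes both runs onto those two already-affected vertices, which only increases the magnitude of the discrepancy there but never the number of affected vertices — so the two runs produce values of $\vec N_{k,t}^{\textup{II-PA}}$ differing by at most $2$. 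Hence $|Z_s-Z_{s-1}|\le2$, and~\eqref{AzHoff} gives $\bbP(|\vec N_{k,t}^{\textup{II-PA}}/n-\bbE\vec N_{k,t}^{\textup{II-PA}}/n|>\epsilon)\le\exp(-c\,\epsilon^2 n)$ for a constant $c>0$ and every fixed $\epsilon>0$, a bound summable over $n$. Borel--Cantelli, applied simultaneously along $\epsilon=1/q$ with $q\in\bbZ^+$, then gives $\vec N_{k,t}^{\textup{II-PA}}/n-\bbE\vec N_{k,t}^{\textup{II-PA}}/n\to0$ almost surely, which together with the previous paragraph establishes~\eqref{IIPam}. (For $m=1$ this concentration is in fact already contained in Theorem~\ref{m=1} combined with the Barab\'asi--Albert result of~\cite{Bollobas2001}.)

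The recursion and its solution are routine bookkeeping once the $(m+1)$-block structure and the $O(1/n)$ remainders are handled carefully, so I expect the main obstacle to be the bounded-difference estimate $|Z_s-Z_{s-1}|\le2$: re-routing a single edge can, in the two coupled runs, make the in-degrees of two vertices differ by an arbitrarily large amount, and the estimate survives only because a coupling can be maintained in which the \emph{number} of vertices whose in-degree is affected stays equal to two, so that the counting statistic $\vec N_{k,t}^{\textup{II-PA}}$ itself moves by at most $2$.
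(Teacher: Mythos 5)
Your proposal is correct and shares the paper's overall skeleton (master equations for the expected in-degree counts over $(m+1)$-step blocks, identification of the limit of $\bbE\vec N_{k,t}^{\textup{II-PA}}/n$, then concentration via a Doob martingale and Azuma--Hoeffding), but it diverges from the paper in two substantive places. First, to solve the block recursion the paper iterates it explicitly, controls the products $\prod(1-b/r)$ via Lemma \ref{lema1} and Euler--Maclaurin, and needs Lemma \ref{lema3} (supermartingale convergence plus dominated convergence) even to assert that the limits $p_k$ and $g(k-1)$ exist before computing them in Lemma \ref{lema4}; you instead invoke the standard fact that $a_{n+1}=(1-b/n+o(1/n))a_n+c_n$ with $c_n\to c$ forces $a_n/n\to c/(1+b)$ and induct on $k$, which is cleaner and self-contained for fixed $k$ (the paper's bookkeeping does track the $k$-dependence of the error terms, $O(k\ln n/n)$, which would only matter if $k$ were allowed to grow with $n$ — it is not, here). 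Second, and more importantly, the almost-sure statement: the paper gets convergence in probability from Azuma with $\epsilon_n=\sqrt{\ln n/n}$ and then appeals to Lemma \ref{lema3} to upgrade it (an a.s.\ limit exists by supermartingale convergence, and the in-probability limit identifies it), whereas you observe that for fixed $\epsilon$ the Azuma tail $\exp(-c\epsilon^2 n)$ is summable and conclude directly by Borel--Cantelli along $\epsilon=1/q$. Your route is more direct and bypasses the supermartingale construction entirely. Finally, you bound the martingale increments by $2$ via an explicit coupling confining the discrepancy to two vertices; the paper asserts $|Z_s-Z_{s-1}|\le 1$ with only a brief verbal justification, and your coupling argument is the more robust (and standard) way to make the bounded-difference property rigorous — the weaker constant is of course harmless. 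I see no gap in your argument.
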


			\begin{obs}
				Observe that by (\ref{IIPam}) and (\ref{simondist}) if $\alpha = 1/(m + 1)$ in the Simon model,
				then the asymptotic in-degree distribution of Simon and II-PA models coincide.  Moreover, from  Theorem
				\ref{m=1}, (\ref{IIPam}) and (\ref{simondist}), it follows that the asymptotic degree distribution
				of  Barab\'asi--Albert model coincides with the asymptotic in-degree distribution of  Simon model only when $m=1$ and
				$\alpha=1/(m+1)$, so that $\alpha=1/2$. We conjecture that some other properties of  Barab\'asi--Albert model when $m=1$,
				for example the diameter, should be also related with the analogous features of Simon model when $\alpha=1/2$.
			\end{obs}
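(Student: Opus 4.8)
The plan is to read the remark as a comparison of the three limiting Beta-function profiles already derived in the preliminaries, and to settle both of its assertions by matching these profiles as functions of $k$. Throughout I would abbreviate the Simon in-degree limit (\ref{simondist}) by $f_S(k;\alpha)=\frac{1}{1-\alpha}B\bigl(k,1+\frac{1}{1-\alpha}\bigr)$, the II-PA in-degree limit (\ref{IIPam}) by $f_{\textup{II-PA}}(k;m)=(1+1/m)B(k,2+1/m)$, and the Barab\'asi--Albert degree limit (\ref{Bollobas}) by $f_{\textup{BA}}(k;m)=m(m+1)B(k,3)$.

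For the first assertion I would proceed by direct substitution. Setting $\alpha=1/(m+1)$ gives $1-\alpha=m/(m+1)$, hence $\frac{1}{1-\alpha}=1+\frac1m$ and $1+\frac{1}{1-\alpha}=2+\frac1m$; inserting these two identities into the two arguments of $f_S(k;\alpha)$ turns it literally into $f_{\textup{II-PA}}(k;m)$. This is all the first sentence of the remark requires.

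For the second assertion I would argue in both directions. For the ``if'' part, Theorem \ref{m=1} shows that when $m=1$ the II-PA in-degree counts and the Barab\'asi--Albert degree counts agree at every finite time, so their limits coincide and equal $f_{\textup{II-PA}}(k;1)=2B(k,3)=f_{\textup{BA}}(k;1)$; combining this with the first assertion at $m=1$, where $\alpha=1/(m+1)=1/2$, chains the Simon limit to the Barab\'asi--Albert limit. For the ``only when'' part I would instead compare $f_{\textup{BA}}(k;m)$ and $f_S(k;\alpha)$ head on. Invoking the stated asymptotic $B(x,y)\sim x^{-y}$, the two profiles decay like $m(m+1)k^{-3}$ and $\frac{1}{1-\alpha}k^{-(1+\frac{1}{1-\alpha})}$, so equality for all $k$ forces the exponents to match, i.e.\ $1+\frac{1}{1-\alpha}=3$, whence $\alpha=1/2$. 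Substituting $\alpha=1/2$ back leaves $2B(k,3)=m(m+1)B(k,3)$, so $m(m+1)=2$ and, since $m\in\bbZ^+$, necessarily $m=1$. Hence $m=1$ together with $\alpha=1/(m+1)=1/2$ is the unique admissible pair.

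Once the three limits are written as Beta-function profiles the computations are elementary, and the only delicate point is the ``only when'' direction: one must keep the identification of the power-law exponent separate from that of the constant prefactor. The tail comparison does precisely this, using the exponent to pin down $\alpha$ and then the prefactor to pin down $m$; alternatively one may invoke the remark following Theorem \ref{m=1} to rule out $m>1$ directly and then use the first assertion to force $\alpha=1/2$. The closing statement about the diameter is offered as a conjecture and lies outside what must be proved here.
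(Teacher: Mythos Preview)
Your argument is correct. The paper itself treats this statement as a remark with no separate proof: it simply points to (\ref{IIPam}), (\ref{simondist}) and Theorem \ref{m=1} and leaves the reader to check that the Beta-function profiles match, which is exactly the substitution you carry out in your first paragraph. Your handling of the ``only when'' direction via the tail exponent and then the prefactor is a clean way to make explicit what the paper leaves implicit; the alternative you mention---using the remark after Theorem \ref{m=1} to exclude $m>1$ directly---is closer in spirit to how the paper frames the discussion, but both routes are valid and elementary.
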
  

			\begin{obs}
				Observe that (\ref{IIPam}) coincides with  (\ref{West}). Thus, the previous theorem gives a rigorous formalization
				to the heuristic result in \cite{Newman2005}.
			\end{obs}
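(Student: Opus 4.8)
The plan is to show that the almost-sure limit established in Theorem \ref{newman} is the very same closed-form expression appearing heuristically in (\ref{West}), and then to spell out in what sense this makes Theorem \ref{newman} a rigorous counterpart of the argument in \cite{Newman2005}. The first sentence of the Remark is a purely symbolic assertion, and its proof is by direct inspection of the two right-hand sides. First I would place them next to each other: equation (\ref{IIPam}) gives the almost-sure limit of $\vec N_{k,t}^{\textup{II-PA}}/n$ as
\[
\frac{(1+1/m)\,\Gamma(k)\,\Gamma(2+1/m)}{\Gamma(k+2+1/m)},
\]
while (\ref{West}) asserts that $\lim_{n\to\infty}\vec N_{k,t}/n$ equals the same ratio, which (\ref{West}) also records in the form $(1+1/m)\,B(k,2+1/m)$. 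These two expressions are term-for-term identical, so the coincidence claimed in the Remark holds. If one wishes to make the identity manifest, it suffices to invoke the definition of the Beta function, $B(x,y)=\Gamma(x)\Gamma(y)/\Gamma(x+y)$, with $x=k$ and $y=2+1/m$, which rewrites both sides as $(1+1/m)\,B(k,2+1/m)$.

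Next I would address the interpretive half of the statement, namely that Theorem \ref{newman} furnishes a rigorous formalization of the heuristic in \cite{Newman2005}. This is not a further calculation but a comparison of the two derivations. In \cite{Newman2005} the value (\ref{West}) is obtained by a master-equation (mean-field) argument: one writes a recurrence for the expected counts, passes to a continuum approximation, and reads off the stationary proportion, without controlling the fluctuations of the random ratio or proving that the ratio itself converges. Theorem \ref{newman}, by contrast, establishes almost-sure convergence of the genuinely random quantity $\vec N_{k,t}^{\textup{II-PA}}/n$ to a limit, and the symbolic step above identifies that limit with the value predicted heuristically. Hence the theorem certifies Newman's prediction in the strong almost-sure sense, which is precisely the claimed formalization.

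The only point requiring care --- a bookkeeping matter rather than a genuine obstacle --- is to confirm that the two statements refer to the same object and the same regime before declaring them equal. Both (\ref{West}) and (\ref{IIPam}) are evaluated along the subsequence of times $t=n(m+1)$, at which the II-PA graph has exactly $n$ complete vertices; both normalize the in-degree count $\vec N_{k,t}$ by this number $n$; and both hold for fixed $m\in\bbZ^+$ and all $k\geq1$. Once this matching of quantities and parameter ranges is checked, the identity of the limits is immediate and the Remark follows.
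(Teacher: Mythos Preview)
Your proposal is correct and matches the paper's intent. The paper provides no proof for this Remark because it is self-evident: the right-hand sides of (\ref{IIPam}) and (\ref{West}) are literally the same expression, and the only content of the Remark is that Theorem \ref{newman} upgrades Newman's heuristic limit in (\ref{West}) to almost-sure convergence. Your checks on the matching regime ($t=n(m+1)$, normalization by $n$, fixed $m$ and $k$) are appropriate bookkeeping but go beyond what the paper itself records.
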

  
			\begin{obs}
				Theorem \ref{newman} can be compared with the recent model-free approach of Ostroumova, Ryabchenko and Samostav
				(see Section 3, Theorem 2 in \cite{12053015}, with $A=m/(m+1)$ and $B=0$). However, in that work a preferential attachment
				rule proportional to the degreeis considered, and  Theorem 2 in \cite{12053015}  makes use of the initial condition that the degree
				of an existing vertex should be at least equal to $m$. Instead, in the II-PA model, it is considered a preferential attachment
				rule proportional to the in-degree with the initial condition that the in-degree of an existing vertex should be at least equal to
				one. Therefore, the II-PA model does not fit into the general setup of  Theorem 2 in \cite{12053015} and we cannot directly apply
				it to get the result of  Theorem \ref{newman} given in this paper. We believe however that, following these new ideas,
				but considering the in-degree and the corresponding initial condition we can obtain the asymptotic in-degree distribution
				for the II-PA model. 
				
				However, in this paper we use the master equations approach for consistency with the theory used
				to study Simon model.
			\end{obs}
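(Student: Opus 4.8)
The plan is to follow the three-stage scheme already used for the Simon model in Section~\ref{Simonmodel}: write down the master equations for the expected counts $\bbE\vec N_{k,t}$, solve them asymptotically along the subsequence $t=n(m+1)$, and then promote the convergence of $\bbE\vec N_{k,t}/n$ to the almost sure convergence of $\vec N_{k,t}/n$ by a martingale concentration argument. The one structural difference from the Simon case is that in the II-PA model the vertex births are not spread out at random but occur deterministically, one every $m+1$ steps, so the recursion has to be composed over an entire block of $m+1$ consecutive time steps.

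First I would set up and solve the recursion. Over a block running from $t-1=n(m+1)$ to $t=(n+1)(m+1)$, at the birth step $t=n(m+1)+1$ a vertex of in-degree $1$ is created (so $\vec N_{1,\cdot}$ increases by $1$ and the other counts are unchanged), and at each preferential-attachment step $t=n(m+1)+i$, $i=2,\dots,m+1$, the total in-degree present is exactly $t-1$, so a vertex of current in-degree $k-1$ receives the new edge with probability $(k-1)\vec N_{k-1,t-1}/(t-1)$ while a vertex of in-degree $k$ leaves level $k$ with probability $k\vec N_{k,t-1}/(t-1)$; note that, since each vertex carries a loop from the moment of its birth, no vertex ever has in-degree $0$, hence level $1$ never gains a vertex at a preferential-attachment step. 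Conditioning on the history up to $t-1$ and taking expectations gives, for each preferential-attachment step,
\begin{align*}
	\bbE\vec N_{1,t}=\Big(1-\tfrac1{t-1}\Big)\bbE\vec N_{1,t-1},\qquad
	\bbE\vec N_{k,t}=\Big(1-\tfrac{k}{t-1}\Big)\bbE\vec N_{k,t-1}+\tfrac{k-1}{t-1}\bbE\vec N_{k-1,t-1}\quad(k\ge2).
\end{align*}
Composing the $m$ such steps with the birth step, and using that the intra-block denominators differ only by $O(1)$ and that each step moves $\bbE\vec N_{k,\cdot}$ by only $O(1)$ (via the trivial bound $\vec N_{k,t}\le V_t\le n+1$), one gets the clean one-step-per-block recursion
\begin{align*}
	\bbE\vec N_{k,(n+1)(m+1)}=\Big(1-\tfrac{mk}{(m+1)n}+O(n^{-2})\Big)\bbE\vec N_{k,n(m+1)}+\tfrac{m(k-1)}{(m+1)n}\bbE\vec N_{k-1,n(m+1)}+\mathbf 1_{\{k=1\}}+O(n^{-1}).
\end{align*}
Arguing by induction on $k$ and invoking the standard lemma for recursions of the form $x_{n+1}=(1-c_n/n)x_n+d_n$ with $c_n\to c>0$ and $d_n\to d$, for which $x_n/n\to d/(1+c)$ --- the same device used in \cite{Durrett2006} to solve \eqref{Sk1}--\eqref{Sk} --- one obtains $\bbE\vec N_{k,n(m+1)}/n\to c_k$ with $c_1=\tfrac{m+1}{2m+1}$ and $c_k=\tfrac{k-1}{\,k+1+1/m\,}c_{k-1}$ for $k\ge2$; unwinding this ratio recursion gives exactly $c_k=(1+1/m)B(k,2+1/m)$, the right-hand side of \eqref{IIPam}.

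For the almost sure statement I would fix $k$ and $t$, let $\mathcal F_s$ be the $\sigma$-field generated by the attachment choices up to time $s\le t$, and set $Z_s=\bbE(\vec N_{k,t}\mid\mathcal F_s)$, a martingale with $Z_0=\bbE\vec N_{k,t}$ and $Z_t=\vec N_{k,t}$. One checks $|Z_s-Z_{s-1}|\le 2$ exactly in the spirit of the Simon computation: two evolutions that agree up to time $s-1$ and differ only in the outcome of step $s$ can be coupled so that at every later step they attach the new edge to the same vertex, except on a disagreement event of probability equal to the total variation distance between the two attachment laws; on such an event each evolution attaches to a vertex in its own ``excess'', which always lies among the (at most two) vertices on which the two in-degree vectors already differ, so this set of differing coordinates never grows beyond size $2$, and therefore $\vec N_{k,t}$ differs between the two evolutions by at most $2$ regardless of the later randomness. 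The Azuma--Hoeffding inequality \eqref{AzHoff} then gives $\bbP(|\vec N_{k,t}-\bbE\vec N_{k,t}|>x)\le\exp(-x^2/(8t))$; choosing $x=x_n$ with $x_n=o(n)$ and $\sum_n\exp(-x_n^2/(8(m+1)n))<\infty$ (for instance $x_n=n^{3/4}$) and applying Borel--Cantelli along $t=n(m+1)$ yields $|\vec N_{k,n(m+1)}-\bbE\vec N_{k,n(m+1)}|/n\to0$ almost surely, which combined with $\bbE\vec N_{k,n(m+1)}/n\to c_k$ proves \eqref{IIPam}.

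I expect the real work to be concentrated in the two non-routine points rather than in the calculus. One is controlling the intra-block fluctuations carefully enough that the per-block recursion genuinely has the displayed form, and in particular proving that the limits $c_k$ exist rather than merely checking that the fixed-point equations are mutually consistent. The other is the coupling behind the bound $|Z_s-Z_{s-1}|\le 2$; here the delicate point is that the two coupled in-degree vectors may well drift apart in magnitude, yet the number of coordinates on which they differ --- which is all that $\vec N_{k,t}$ is sensitive to --- stays bounded, so the natural Doob martingale does have uniformly bounded increments after all.
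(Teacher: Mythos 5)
Your proposal is essentially a proof of Theorem \ref{newman}, which is the mathematical content behind this remark, and it follows the same backbone as the paper's argument: master equations for $\bbE\vec N_{k,t}$ composed over blocks of $m+1$ steps (your per-block recursion agrees with \eqref{nmk1} and \eqref{nmk} of Lemma \ref{lema2}), followed by a Doob-martingale/Azuma--Hoeffding concentration step. Where you genuinely diverge is in how the limit is pinned down and how convergence in probability is upgraded to an almost sure statement. The paper first invokes the supermartingale convergence theorem (Lemma \ref{lema3}) to show that $\vec N(k,n)/n$ converges almost surely to \emph{some} limit, uses dominated convergence to conclude that $\lim_n \bbE\vec N(k,n)/n$ exists, and only then identifies it by explicitly iterating the recursion with the product estimates of Lemma \ref{lema1} (this is Lemma \ref{lema4}); the almost sure convergence in \eqref{IIPam} is then inherited from Lemma \ref{lema3}, the Azuma--Hoeffding step contributing only convergence in probability to the correct constant. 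You instead identify $\lim_n\bbE\vec N_{k,n(m+1)}/n$ directly by induction on $k$ via the standard lemma for recursions $x_{n+1}=(1-c_n/n)x_n+d_n$ with $c_n\to c$, $d_n\to d$, and you obtain the almost sure statement by summing the Azuma--Hoeffding tail along the subsequence $t=n(m+1)$ and applying Borel--Cantelli. Both routes are sound; yours dispenses with the supermartingale argument entirely and is arguably more self-contained, at the modest price of having to verify that the error terms entering $d_n$ are $o(1)$ for each fixed $k$ (harmless here, as the paper also restricts to $k$ independent of $n$). Two cosmetic discrepancies: the paper asserts the increment bound $|Z_s-Z_{s-1}|\le 1$ for the II-PA martingale where your coupling gives $2$ (either constant suffices for \eqref{AzHoff}), and the remark's final sentence --- that the master-equations approach is used for consistency with the treatment of the Simon model --- is exactly the methodological choice your proposal makes.
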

  		
			Before proving Theorem  \ref{newman} we need to prove the following lemmas.
			\begin{lem}
				\label{lema1}
				Let $r,s,t\in \bbZ^+$ and $b\in \bbR$ such that $|b/r|< 1$, then 
				\begin{align*}
					\prod_{r=s+1}^t \Big(1-\frac{b}{r}\Big)=\Big(\frac{s}{t}\Big)^b\Big(1+O\Big(\frac{t-s}{st}\Big)\Big).
				\end{align*}
			\end{lem}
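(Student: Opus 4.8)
The plan is to take logarithms and reduce the statement to the elementary estimate $\sum_{r=s+1}^{t}1/r=\log(t/s)+O((t-s)/(st))$. First I would note that, under the hypothesis $|b/r|<1$ for every $r$ in the range $s+1\le r\le t$, all factors $1-b/r$ are positive, so the product is positive and one may pass to logarithms; since moreover $(t-s)/(st)\le 1/s\le1$, the target identity is equivalent --- using $\log(1+u)=O(u)$ and $e^{u}=1+O(u)$ for $|u|$ bounded --- to
\[
\sum_{r=s+1}^{t}\log\!\Big(1-\frac br\Big)=b\log\frac st+O\!\Big(\frac{t-s}{st}\Big).
\]
Throughout I treat $b$ as fixed, so implied constants may depend on it; note that $|b/r|\le|b|/(s+1)$ stays bounded away from $1$ on the whole range, which makes the Taylor estimates below uniform.

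Next I would write $\log(1-b/r)=-b/r+\rho_r$ with $\rho_r:=\log(1-b/r)+b/r$, and use the standard bound $|\rho_r|\le C_b/r^{2}$ valid on our range. Since $\sum_{r=s+1}^{t}1/r^{2}\le\int_{s}^{t}dx/x^{2}=1/s-1/t=(t-s)/(st)$, the remainders contribute only $\sum_{r}\rho_r=O((t-s)/(st))$. It then remains to estimate $-b\sum_{r=s+1}^{t}1/r$.

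For the harmonic-type sum I would compare with integrals: monotonicity of $x\mapsto1/x$ gives $\int_{s+1}^{t+1}dx/x\le\sum_{r=s+1}^{t}1/r\le\int_{s}^{t}dx/x=\log(t/s)$, and the gap between these two bounds is $\log\frac{t(s+1)}{s(t+1)}=\log\!\big(1+\tfrac{t-s}{s(t+1)}\big)\le\tfrac{t-s}{st}$. Hence $\sum_{r=s+1}^{t}1/r=\log(t/s)+O((t-s)/(st))$; multiplying by $-b$ and adding $\sum_r\rho_r$ gives the displayed logarithmic identity, and exponentiating (again using $(t-s)/(st)\le1$, so that $e^{O((t-s)/(st))}=1+O((t-s)/(st))$) produces exactly $\big(\tfrac st\big)^{b}\big(1+O((t-s)/(st))\big)$.

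The main obstacle is not the argument itself but keeping the error at the claimed size $(t-s)/(st)$ rather than the cruder $1/s$; this is why I would not simply invoke a Gamma-function asymptotic such as $\Gamma(t+1-b)\Gamma(s+1)/\big(\Gamma(s+1-b)\Gamma(t+1)\big)=(s/t)^{b}(1+O(1/s))$. Both sources of error above --- the tail of $\sum 1/r^{2}$ and the Euler--Maclaurin defect of the harmonic sum --- happen to be bounded by $1/s-1/t=(t-s)/(st)$, so tracking them by direct integral comparison, rather than through Stirling's formula, is precisely what delivers the sharp form of the estimate.
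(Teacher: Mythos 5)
Your proposal is correct and follows essentially the same route as the paper: take logarithms, Taylor-expand $\ln(1-b/r)$ into $-b/r$ plus an $O(1/r^2)$ remainder, bound both $\sum 1/r$ and $\sum 1/r^2$ by $(t-s)/(st)$-accurate integral comparisons, and exponentiate. The only cosmetic difference is that you use direct monotonicity of $1/x$ where the paper invokes the Euler--Maclaurin identities with the floor function, and your remark that the implied constants depend on $b$ (through $|b|/(s+1)$ being bounded away from $1$) is a point the paper leaves implicit.
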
 
			\begin{proof}
				Since $|b/r|< 1$, then using Taylor expansion for $\ln(1-b/r)$ we get
				\begin{align*}
					\prod_{r=s+1}^t \Big(1-\frac{b}{r}\Big)=\exp\Big[\sum_{r=s+1}^t\left(\frac{-b}{r}+O\Big(\frac{b^2}{r^2}\Big)\right)\Big].
				\end{align*}
				Now, by Euler--Maclaurin it is possible to obtain  that (see \cite{IAAlg}) 
				\begin{enumerate}
					\item $\sum_{r=1}^{t}\frac{1}{r}=\ln t+1-\int_{1}^{t}\frac{y-\lfloor{y}\rfloor}{y^2}dy$,
					\item $\sum_{r=1}^{t}\frac{1}{r^2}=\frac{1}{t}-2\int_{1}^{r}\frac{\lfloor{y}\rfloor}{y^{3}}dy$.
				\end{enumerate} 
				Using these expressions and the fact that $y-1\leq\lfloor{y}\rfloor\leq y$,
				where $\lfloor y \rfloor$ indicates the integer part of $y$, we obtain
				\begin{align}
					\label{1/r}
					\ln t -\ln s -\frac{t-s}{st}& <\sum_{r=1}^{t}\frac{1}{r}< \ln t -\ln s ,\\
					\frac{t-s}{st}-\frac{t^2-s^2}{(st)^2}& <\sum_{r=1}^{t}\frac{1}{r^2}< \frac{t-s}{st},
				\end{align} 
				or,
				$\sum_{r=1}^{t}1/r= \ln t-\ln s-|\delta_1|,$ where $|\delta_1|<(t-s)/(st)$,
				and
				$\sum_{r=1}^{t}1/r^2= (t-s)/(st)-|\delta_2|$,  where  $|\delta_2|<(t^2-s^2)/(st)^2$. Thus,
				\begin{align*}
					\prod_{r=s+1}^t \Big(1-\frac{b}{r}\Big)&=\exp\Big[b\ln\Big(\frac{s}{t}\Big)+O\Big(\frac{t-s}{st}\Big)\Big]\\
					&=\Big(\frac{s}{t}\Big)^b\Big(1+O\Big(\frac{t-s}{st}\Big)\Big).
				\end{align*}
			\end{proof}

			\begin{lem}\label{lema2}
				Let $\vec N_{k,t}^{\textup{II-PA}}$ and  $\vec N(k,n)$ denote the number of vertices with in-degree equal
				$k$, $k\geq1$, at time $t$, and the number of vertices with in-degree $k$ when there are exactly $n$ complete vertices
				in the II-PA model, respectively.  Then,
				\begin{itemize}
					\item for $m=1$ and $k=1$,
						\begin{align}
							\label{nm1k1}
							\bbE\vec N(1,n+1) = \Big(1-\frac{1}{(m+1)(n+1)-1}\Big)+\Big(1-\frac{1}{(m+1)(n+1)-1}\Big)\bbE\vec N(1,n);
						\end{align}
					\item for $m>1$ and $k=1$,
						\begin{align}
							\label{nmk1}
							\bbE\vec N(1,n+1)=1+\Big(1-\frac{m}{(n+1)(m+1)-1}\Big)\bbE\vec N(1,n)+O\Big(\frac{1}{n}\Big);
						\end{align}
					\item for $m=1$ and $k\geq2$,
						\begin{align}
							\label{nm1k}
							\bbE\vec N(k,n+1)&=\frac{(k-1)\bbE\vec N(k-1,n)}{(n+1)(m+1)-1}+\Big(1-\frac{k}{(n+1)(m+1)-1}\Big)
							\bbE\vec N(k,n)\nonumber\\
							&=\frac{(k-1)\bbE\vec N(k-1,n)}{2(n+1)-1}+\Big(1-\frac{k}{2(n+1)-1}\Big)\bbE\vec N(k,n);
						\end{align}
					\item for $m>1$ and $k\geq2$,
						\begin{align}
							\label{nmk}
							\bbE\vec N(k,n+1)= \frac{(k-1)m\bbE\vec N(k-1,n)}{(n+1)(m+1)-1}+\Big(1-\frac{km}{(n+1)(m+1)-1}\Big)\bbE\vec
							N(k,n)+O\Big(\frac{k}{n}\Big).
						\end{align}
				\end{itemize}
			\end{lem}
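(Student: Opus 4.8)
The plan is to prove all four recursions by the same elementary device: condition on the graph at the instant it has exactly $n$ complete vertices (i.e.\ at discrete time $t=n(m+1)$), and then account for the $m+1$ elementary steps producing the $(n+1)$th complete vertex. These steps are: at time $n(m+1)+1$ the new vertex $v_{n+1}$ is born with a directed loop, hence with in-degree one; and at each of the $m$ times $n(m+1)+2,\dots,(n+1)(m+1)$ a directed edge is attached from $v_{n+1}$ to a vertex selected according to \eqref{IIPAM}.

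The key computation is the expected one-step change of $\vec N_k$ produced by one such preferential-attachment edge added at a time $t$. Conditionally on $\mathcal F_{t-1}$ (the history up to $t-1$), this edge hits a given vertex $v_j$ with probability $\vec d(v_j,t-1)/(t-1)$, hence it hits \emph{some} vertex of in-degree $j$ with probability $j\,\vec N_{j,t-1}/(t-1)$; these probabilities sum to one because the total in-degree at any time equals that time. Such a hit raises $\vec N_k$ by one if $j=k-1$ and lowers it by one if $j=k$ (for $k=1$ only the latter can happen, since every vertex has in-degree at least one), so
\[
\bbE\big[\vec N_{k,t}-\vec N_{k,t-1}\mid\mathcal F_{t-1}\big]=\frac{(k-1)\,\vec N_{k-1,t-1}-k\,\vec N_{k,t-1}}{t-1},\qquad k\ge1,
\]
with $\vec N_{0,\cdot}\equiv0$. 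Summing this over the $m$ attachment times, adding the deterministic increment of $\vec N_1$ caused by the birth of $v_{n+1}$, and taking expectations will yield the recursions.

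For $m=1$ there is a single attachment step, at time $t=2(n+1)$, so $t-1=2n+1=(n+1)(m+1)-1$, and the configuration at that instant is precisely the one with $n$ complete vertices together with the just-born $v_{n+1}$ of in-degree one. Substituting $\vec N_{k,2n+1}=\vec N(k,n)+\mathbf 1_{\{k=1\}}$ into the displayed identity and taking expectations gives \eqref{nm1k1} (for $k=1$) and \eqref{nm1k} (for $k\ge2$), exactly. For $m>1$ one sums the $m$ copies of the identity over $t-1\in\{n(m+1)+1,\dots,(n+1)(m+1)-1\}$ and then makes two harmless replacements: each denominator $t-1$ differs from $(n+1)(m+1)-1$ by less than $m$, so $1/(t-1)=\big[(n+1)(m+1)-1\big]^{-1}\big(1+O(1/n)\big)$; and over these $m$ steps each count $\vec N_{j,t-1}$ moves by at most $m$, so $\vec N_{j,t-1}=\vec N(j,n)+O(1)$ for $j\in\{k-1,k\}$ (this is also where the $+1$ contributed by $v_{n+1}$ to $\vec N_1$ disappears into the error term when $k=2$, and where the leading constant $1$ in \eqref{nmk1} comes from when $k=1$). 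After substituting, each of the $m$ summands contributes an error of order $k/n$ because of the factor $k$ multiplying $\vec N_{k,\cdot}$; collecting them gives the $O(k/n)$ term of \eqref{nmk} and the $O(1/n)$ term of \eqref{nmk1}, and taking expectations completes the argument.

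The identity for the expected increment is routine; the only real care is needed in the error bookkeeping for $m>1$, where one must verify that replacing the time-$(t-1)$ quantities by their values at time $n(m+1)$, summed over the $m$ intermediate attachment steps, costs only $O(k/n)$ uniformly, and that the contribution of the freshly created vertex $v_{n+1}$ to $\vec N_1$ is handled consistently --- kept exactly when $m=1$, absorbed into the error term when $m>1$.
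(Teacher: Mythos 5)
Your proposal is correct and follows essentially the same route as the paper: condition at the instants when the process has exactly $n$ complete vertices, write the one-step master equation for the expected change of $\vec N_k$ under the attachment rule \eqref{IIPAM}, and accumulate the $m+1$ sub-steps, freezing the counts and the denominators at their block-initial values with $O(k/n)$ error when $m>1$ (the paper composes the one-step affine recursions multiplicatively and expands the resulting products via Lemma \ref{lema1}, whereas you telescope the increments additively, but this is only a bookkeeping difference and your error accounting is sound). One small caveat: for $m=1$ and $k=2$ your (correct) substitution $\vec N_{1,2n+1}=\vec N(1,n)+1$ produces an extra $1/(2n+1)$ in the first term relative to \eqref{nm1k} as stated --- the paper's own intermediate equation carries the same $+1$ and silently drops it, so this is an $O(1/n)$ imprecision in the lemma's statement, harmless for the limit, rather than a gap in your argument.
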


			\begin{proof}
				Let $m=1$ and $k=1$ we start at time $t=(m+1)n=2n$, i.e., when there are exactly $n$ complete vertices.
				To see what happens when exactly $(n+1)$ complete vertices appear, we need to check what happens in two steps of the process,
				at time $2n+1$, when deterministically appears a new vertex with a directed loop, and at time $2n+2=2(n+1)$, when a new
				directed edge is added by preferential attachment, and the last vertex added becomes complete. Thus, conditioning on what
				happens until time $t+1$, we have 
				\begin{align}
					\label{m1k1}
					\bbE(\vec N_{1,t+2}^{\textup{II-PA}}) & = \bbE\Big[(\vec N_{1,t}^{\textup{II-PA}}+1)\Big(1-\frac{\vec
					N_{1,t}^{\textup{II-PA}}+1}{t+1}\Big)+\vec N_{1,t}^{\textup{II-PA}}\Big(\frac{\vec N_{1,t}^{\textup{II-PA}}+1}{t+1}\Big)
					\Big]\nonumber\\
					& = \Big(1-\frac{1}{t+1}\Big)+\Big(1-\frac{1}{t+1}\Big)\bbE(\vec N_{1,t}^{\textup{II-PA}}).
				\end{align}
				Thus, if  $\vec N(k,n)$ denotes  the number of vertices with in-degree $k$ when there are exactly $n$ complete vertices
				in the process, then we can write the previous equation as (\ref{nm1k1}).

				Let $m>1$ and $k=1$. Now we need a bit  more attention, since we have to consider two different situations,
				when $t$  is multiple of $(m+1)$ and when $t$ is not. In the first situation $t$ has the form  $t=n(m+1)$, so  we are in
				the instant of time when there are exactly $n$ complete vertices, and as we did above, to see what happens later we check
				what happens in the two subsequent steps of the process, at time $n(m+1)+1$ when a deterministic event happens,
				a new vertex with a directed loop appears, and at time $n(m+1)+2$ when something probabilistic happens, a new directed
				edge is added by preferential attachment.  In the first case equation (\ref{m1k1}) still holds. In the second situation observe
				that if $m>1$,  in order to see complete the  vertex added at time $t=n(m+1)+1$, we have to check what happens
				from $n(m+1)+1$ until $n(m+1)+(m+1)=(n+1)(m+1)$, when this vertex becomes complete.
				Thus, when $t$ is not multiple of $(m+1)$ we have the following equation.
				\begin{align}
					\label{mk1}
					\bbE(\vec N_{1,t+1}^{\textup{II-PA}}) & = \bbE\Big[\vec N_{1,t}^{\textup{II-PA}}
					\Big(1-\frac{\vec N_{1,t}^{\textup{II-PA}}}{t}\Big)+(\vec N_{1,t}^{\textup{II-PA}}-1)
					\Big(\frac{\vec N_{1,t}^{\textup{II-PA}}}{t}\Big)\Big]\nonumber\\
					& = \Big(1-\frac{1}{t}\Big)\bbE(\vec N_{1,t}^{\textup{II-PA}}).
				\end{align}
   
				Now, we may use simultaneously (\ref{m1k1}) and (\ref{mk1}) to get the corresponding equation of what happens in
				$(m+1)$ steps of the process.  We start at time $t=(n+1)(m+1)-1$, so at time $t+1$ the process will have exactly $(n+1)$
				complete vertices, and since $t$ is not multiple of $(m+1)$, we need to begin using (\ref{mk1}) $(m-1)$ times, and then use
				(\ref{m1k1}). Iterating $m$ times, we obtain
				\begin{align}
					\bbE(\vec N_{1,t+1}^{\textup{II-PA}}) & = \Big[1+E(\vec N_{1,t-m}^{\textup{II-PA}})\Big]
					\prod_{j=0}^{m-1}\Big(1-\frac{1}{t-j}\Big)\nonumber\\
					& = \Big[1+E(\vec N_{1,t-m}^{\textup{II-PA}})\Big]\prod_{r=t-(m-1)}^{t}\Big(1-\frac{1}{r}\Big)\nonumber\\
					& = \Big(1-\frac{m}{t}+O\Big(\frac{1}{t^2}\Big)\Big)\Big[1+E(\vec N_{1,t-m}^{\textup{II-PA}})\Big],
				\end{align}
				where  we have used in the the last two steps that  $r=t-j$ and  Lemma \ref{lema1}.
				Finally, using the notation $\vec N(1,n)$, and since $\vec N(1,n)/n\leq1$,  we get (\ref{nmk1}).

				The cases $k=2$ and $k>2$ need to be  first considered separately, and in each of these we need to analyze when
				$m=1$ and when $m>1$. Then we will show that the equations for $k=2$ and $k>2$ admit a general form,
				that include the cases $k\geq2$. 

				Let $m=1$.  Analogously as we did when $m=1$ and $k=1$,  consider the  time $t=n(m+1)$, i.e., when there are exactly
				$n$ complete vertices. To account for what happens until when $(n+1)$ complete vertices appear, we need to recognize
				two steps of the process. Indeed,
				\begin{align}
					\label{m1k2}
					\bbE(\vec N_{2,t+2}^{\textup{II-PA}})= {} &\bbE\Big[(\vec N_{2,t}^{\textup{II-PA}}+1)
					\Big(\frac{\vec N_{1,t}^{\textup{II-PA}}+1}{t+1}\Big)+(\vec N_{2,t}^{\textup{II-PA}}-1)
					\frac{2\vec N_{2,t}^{\textup{II-PA}}}{t+1}\nonumber\\
					& + \vec N_{2,t}^{\textup{II-PA}}\Big(1-\frac{\vec N_{1,t}^{\textup{II-PA}}+1+2
					\vec N_{2,t}^{\textup{II-PA}}}{t+1}\Big)\Big]\nonumber\\
					= {} & \frac{\bbE(\vec N_{1,t}^{\textup{II-PA}})+1}{t+1}+\Big(1-\frac{2}{t+1}\Big)\bbE(\vec N_{2,t}^{\textup{II-PA}}),
				\end{align}
				and for $k>2$,
				\begin{align}
					\label{m1k}
					\bbE(\vec N_{k,t+2}^{\textup{II-PA}}) = {} & \bbE\Big[(\vec N_{k,t}^{\textup{II-PA}}+1)
					\Big(\frac{(k-1)\vec N_{k-1,t}^{\textup{II-PA}}}{t+1}\Big)+(\vec N_{k,t}^{\textup{II-PA}}-1)
					\frac{k\vec N_{k,t}^{\textup{II-PA}}}{t+1}\nonumber\\
					& + \vec N_{k,t}^{\textup{II-PA}}\Big(1-\frac{(k-1)\vec N_{k-1,t}^{\textup{II-PA}}
					+k\vec N_{k,t}^{\textup{II-PA}}}{t+1}\Big)\Big]\nonumber\\
					= {} & \frac{(k-1)\bbE(\vec N_{k-1,t+1}^{\textup{II-PA}})}{t+1}
					+\Big(1-\frac{k}{t+1}\Big)\bbE(\vec N_{k,t+1}^{\textup{II-PA}}).
				\end{align}
				Note that in the last line of (\ref{m1k2}) and (\ref{m1k}),  we have replaced $\vec N_{k,t+1}^{\textup{II-PA}}$
				with $\vec N_{k,t}^{\textup{II-PA}}$. In fact if  $t=n(m+1)$, then at time $t+1$ the process just adds deterministically
				a new vertex with in-degree one, so when $k\geq2$, $\vec N_{k,t+1}^{\textup{II-PA}}=\vec N_{k,t}^{\textup{II-PA}}$,
				as well as $\vec N_{1,t+1}^{\textup{II-PA}}=\vec N_{1,t}^{\textup{II-PA}}+1$. Using this observation we can express
				(\ref{m1k2}) and (\ref{m1k}) as a single equation holding for $k\geq2$ and $m=1$.
				Using  the notation $\vec N(1,n)$  it can be written as (\ref{nm1k}).

				Let $m>1$. once more we need to consider when $t$ is multiple of $(m+1)$, and when it is not.
				When $t=n(m+1)$ we obtain again (\ref{m1k2}) and (\ref{m1k}) for $k=2$ and $k>2$, respectively, while if $t$ is not multiple of
				$(m+1)$ and $k\geq2$ it holds
				\begin{align}
					\label{mk}
					\bbE(\vec N_{k,t+1}^{\textup{II-PA}})= {} & \bbE\Big[(\vec N_{k,t}^{\textup{II-PA}}
					+1)\Big(\frac{(k-1)\vec N_{k-1,t}^{\textup{II-PA}}}{t}\Big)
					+(\vec N_{k,t}^{\textup{II-PA}}-1)\frac{k\vec N_{k,t}^{\textup{II-PA}}}{t}\nonumber\\
					& + \vec N_{k,t}^{\textup{II-PA}}\Big(1-\frac{(k-1)\vec N_{k-1,t}^{\textup{II-PA}}
					+k\vec N_{k,t}^{\textup{II-PA}}}{t}\Big)\Big]\nonumber\\
					= {} & \frac{(k-1)\bbE(\vec N_{k-1,t}^{\textup{II-PA}})}{t}+\Big(1-\frac{k}{t}\Big)
					\bbE(\vec N_{k,t}^{\textup{II-PA}}).
				\end{align}
				Now, in order to get the corresponding equation for what happens in $(m+1)$ steps, i.e., during the time interval
				from when there are $n$ vertices until when there are $(n+1)$ vertices, it is necessary to use (\ref{m1k}) and (\ref{mk})
				simultaneously.  In the same manner as we did for $k=1$, we take $t=(n+1)(m+1)-1$, so that at time $t+1$ the process will
				have exactly $(n+1)$ complete vertices.  Since $t$ is not multiple of $(m+1)$, we need to begin using (\ref{mk}) $(m-1)$ times,
				and then use (\ref{m1k}). Thus, iterating $m$ times, after some algebra  we obtain that for any $k\geq 2$,
				\begin{align}
					\label{mkgeneral}
					\bbE(\vec N_{k,t+1}^{\textup{II-PA}})= {} & \Big[\sum_{i=0}^{m-1}\frac{(k-1)\bbE
					(\vec N_{k-1,t-i}^{\textup{II-PA}})}{t-i}\prod_{j=0}^{i-1}\Big(1-\frac{k}{t-j}\Big)\Big] \notag \\
					&+\Big[\prod_{j=0}^{m-1}
					\Big(1-\frac{k}{t-j}\Big)\Big]\bbE(\vec N_{k,t-(m-1)}^{\textup{II-PA}}),
				\end{align}
				where the empty product (i.e., when $i=0$) is equal to unity.
				Let now $r=t-j$, and since $i\leq m$ and $m$ is fixed, then by Lemma \ref{lema1}  we have
				\begin{align}
					\label{product}
					\prod_{j=0}^{i-1}\Big(1-\frac{k}{t-j}\Big)=\prod_{r=t-(i-1)}^{t}\Big(1-\frac{k}{r}\Big)=1-\frac{ki}{t}+O\Big(\frac{k^2}{t^2}\Big).
				\end{align}
				Moreover,  observe that  $|\bbE(\vec N_{k-1,t-i}^{\textup{II-PA}})
				-\bbE(\vec N_{k-1,t-(m-1)}^{\textup{II-PA}})|\leq m+1-i$, since at each instant at most one edge is added. Thus
				\begin{align}
					\label{sumE}
					\sum_{i=0}^{m-1}\frac{\bbE(\vec N_{k-1,t-i}^{\textup{II-PA}})}{t-i}
					& = \sum_{i=0}^{m-1}\frac{\bbE(\vec N_{k-1,t-(m-1)}^{\textup{II-PA}})}{t}
					\Big(1+\frac{i}{t-i} \Big)+ O\Big(\frac{1}{t}\Big)\nonumber\\
					& = \frac{m\bbE(\vec N_{k-1,t-(m-1)}^{\textup{II-PA}})}{t}+O\Big(\frac{1}{t}\Big).
				\end{align}
				Then using  (\ref{product}) and (\ref{sumE}) and noting that $\frac{(k-1)\vec N_{k-1,t-i}^{\textup{II-PA}}}{t-i}\leq1$,
				we can write (\ref{mkgeneral}) as
				\begin{align}
					\bbE(\vec N_{k,t+1}^{\textup{II-PA}})
					= \frac{(k-1)m\bbE(\vec N_{k-1,t-(m-1)}^{\textup{II-PA}})}{t}
					+\Big(1-\frac{km}{t}\Big)\bbE(\vec N_{k,t-(m-1)}^{\textup{II-PA}})+O\Big(\frac{k}{t}\Big),
				\end{align}
				and using  the notation $\vec N(k,n)$  we obtain (\ref{nmk}).
			\end{proof}

			Theorem \ref{newman} gives the limit value to which
			$\vec N(k,n)/n$ converges when $n$ goes to infinity.
			However, before proving the limit, we need to argue that such  limit exists.
			\begin{lem}
				\label{lema3}
				Let $\vec N(k,n)$ be as in Lemma \ref{lema2}. Then, there exist  values $N_1(k)>0$ and $N_2(k)>0$
				such that 
				\begin{align*}
					\frac{\vec N(k,n)}{n}\longrightarrow N_1(k) \qquad \text{a.s.,}
				\end{align*}     
				and
				\begin{align*}
					\frac{\vec mk\vec N(k,n)}{(m+1)n}\longrightarrow N_2(k) \qquad \text{a.s.}
				\end{align*}
			\end{lem}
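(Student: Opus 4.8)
The plan is to split the claim into a concentration statement and a statement about expectations, and then to run a single induction on $k$ that establishes both limits together: they are intertwined, since the second limit for index $k-1$ is exactly the source term in the mean recursion that produces the first limit for index $k$. Note that the second convergence is a formal consequence of the first (multiply by the fixed constant $mk/(m+1)$), so the real content is the first limit together with strict positivity.

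\textbf{Reduction to expectations.} Fix $k$ and work at time $t=n(m+1)$, the instant at which there are exactly $n$ complete vertices, so that $\vec N(k,n)=\vec N^{\textup{II-PA}}_{k,t}$. As in the treatment of the Simon and Barab\'asi--Albert models in Section \ref{bbbb}, let $\mathcal F_s$ be the $\sigma$-field generated by the edges added up to elementary step $s\le t$, and set $Z_s=\bbE(\vec N^{\textup{II-PA}}_{k,t}\mid\mathcal F_s)$, a Doob martingale with $Z_0=\bbE\vec N^{\textup{II-PA}}_{k,t}$ and $Z_t=\vec N^{\textup{II-PA}}_{k,t}$. Since at each elementary step at most one directed edge enters and a single edge alters the in-degree of at most one vertex by one, a coupling of the two continuations of the process that differ only in the outcome at step $s$ gives $|Z_s-Z_{s-1}|\le C$ for an absolute constant $C$ --- this is exactly the bounded-difference argument already used for the Simon and Barab\'asi--Albert models. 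Azuma--Hoeffding's inequality (\ref{AzHoff}) then yields
\[
  \bbP\Big(\Big|\tfrac{\vec N^{\textup{II-PA}}_{k,t}}{t}-\tfrac{\bbE\vec N^{\textup{II-PA}}_{k,t}}{t}\Big|\ge\epsilon\Big)\le 2\exp\Big(-\tfrac{t\epsilon^2}{2C^2}\Big).
\]
Choosing $\epsilon=\epsilon_n=n^{-1/4}$ along $t=n(m+1)$ makes the right-hand side summable in $n$, so by Borel--Cantelli, almost surely and simultaneously for all $k$, one has $\vec N(k,n)/n-\bbE\vec N(k,n)/n\to0$. Hence it suffices to prove that $\bbE\vec N(k,n)/n$ tends to a strictly positive limit.

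\textbf{Asymptotics of the mean recursion.} I would argue by induction on $k$ using Lemma \ref{lema2}. For $k=1$, equations (\ref{nm1k1}) and (\ref{nmk1}) both have the form $a_{n+1}=\gamma_n+\big(1-\tfrac{\beta}{n}+O(\tfrac1{n^2})\big)a_n+O(\tfrac1n)$ with $a_n=\bbE\vec N(1,n)$, $\beta=m/(m+1)>0$, and $\gamma_n\to\gamma=1>0$; an easy induction gives $a_n=O(n)$, and iterating the recursion while evaluating the arising product by Lemma \ref{lema1} (so that $\prod_{r=j+1}^n(1-\beta/r+O(1/r^2))\sim c\,(j/n)^\beta$) shows $a_n\sim\tfrac{\gamma}{1+\beta}\,n$, whence $\bbE\vec N(1,n)/n\to N_1(1):=\gamma/(1+\beta)>0$. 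For the inductive step, assume $\bbE\vec N(k-1,n)/n\to N_1(k-1)>0$; then $\tfrac{m(k-1)}{(m+1)n}\bbE\vec N(k-1,n)\to\tfrac{m(k-1)}{m+1}N_1(k-1)=:N_2(k-1)>0$, and equations (\ref{nm1k}), (\ref{nmk}) take the same form as above with $a_n=\bbE\vec N(k,n)$, $\beta=km/(m+1)$, and $\gamma_n\to\gamma=N_2(k-1)$ (the $O(k/n)$ remainder is $o(1)$ for fixed $k$). The same computation yields $\bbE\vec N(k,n)/n\to N_1(k):=\tfrac{m+1}{m+1+km}\,N_2(k-1)>0$, and I set $N_2(k):=\tfrac{mk}{m+1}N_1(k)>0$. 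Combining with the reduction step, $\vec N(k,n)/n\to N_1(k)$ and $\tfrac{mk\vec N(k,n)}{(m+1)n}\to N_2(k)$ almost surely, which is the assertion.

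\textbf{Main obstacle.} The only genuinely delicate point is the bounded-difference bound $|Z_s-Z_{s-1}|\le C$: one must exhibit a coupling of the two post-$s$ evolutions under which the discrepancy in the number of in-degree-$k$ vertices at time $t$ remains bounded even though the attachment probabilities at every later step are perturbed. Since this is precisely the estimate established for the Simon and Barab\'asi--Albert models earlier in the paper, it transfers with only cosmetic changes. The recursion asymptotics are, by contrast, routine once Lemma \ref{lema1} is available --- the $O(1/n)$ and $O(k/n)$ errors contribute only lower-order corrections for each fixed $k$ --- the one bit of bookkeeping being that the two limits $N_1(k)$ and $N_2(k)$ must be carried through the induction together.
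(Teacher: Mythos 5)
Your proof is correct, but it takes a genuinely different route from the paper's. The paper proves Lemma \ref{lema3} by turning the mean recursions (\ref{nm1k1})--(\ref{nmk}) into a supermartingale $Z(k,n)=[\vec N(k,n)-n(c+1)]/n$ with $\sup_n\bbE|Z(k,n)|<\infty$ and invoking the supermartingale convergence theorem; this yields almost sure convergence to an a priori \emph{random} limit, and the identification of that limit as a deterministic positive constant is deferred to Lemma \ref{lema4} (which uses Lemma \ref{lema3} together with dominated convergence to define $p_k$) and to the Azuma--Hoeffding step in the proof of Theorem \ref{newman}. You instead get almost sure concentration of $\vec N(k,n)/n$ around its mean from the Azuma--Hoeffding bound plus Borel--Cantelli (the choice $\epsilon_n=n^{-1/4}$ makes the tail summable), and then solve the mean recursion directly by induction on $k$ --- in effect re-deriving Lemma \ref{lema4} inside your argument, with the existence of $\lim_n\bbE\vec N(k-1,n)/n$ supplied by the inductive hypothesis rather than by dominated convergence. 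What your route buys: it delivers convergence to an explicit, strictly positive constant in a single pass (the supermartingale argument alone does not give positivity or determinism of the limit), and it correctly treats the second limit as a trivial constant multiple of the first rather than running a separate supermartingale for it. What it costs: nothing beyond what the paper already assumes --- both routes rest on the bounded-difference estimate $|Z_s-Z_{s-1}|\le C$ for the Doob martingale, which you flag as the delicate point and which the paper likewise asserts by analogy with the Simon and Barab\'asi--Albert computations rather than proving via an explicit coupling.
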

			\begin{proof}
				We make use of supermartingale's convergence theorem (see \cite {BillingsleyMeasure95}, Theorem 35.5)
				and equations (\ref{nm1k1}), (\ref{nmk1}), (\ref{nm1k}) and (\ref{nmk}).
				Consider first (\ref{nm1k1}) and (\ref{nmk1}) and observe that since $\vec N(1,n)/((n+1)(m+1)-1)\leq 1$,     
				\begin{align}
					\label{2831}
					\bbE\vec N(1,n+1)\leq \bbE\vec N(1,n)+1, 
				\end{align}
				while for
				(\ref{nm1k}) and (\ref{nmk}),   
				\begin{align}
					\label{3440}
					\bbE\vec N(k,n+1)\leq \bbE\vec N(k,n)+1+O\Big(\frac{k}{n}\Big). 
				\end{align}
				Let $\mathcal{H}_n$ be the filtration generated by the process $\{\vec N(k,n),\vec N(k-1,n)\}_n$ until time
				$n$, i.e., $\mathcal{H}_n:=\sigma(N(k,j),N(k-1,j);\: 0\leq j\leq n )$.
				If $k=1$, let $Z(1,n)=(\vec N(1,n)-n)/n$,  then by  (\ref{2831}),  
				\begin{align}
					\bbE [Z(1,n+1)\mid \mathcal{H}_{n}]\leq \frac{\vec N(1,n)+1-(n+1)}{n+1}\leq \frac{\vec N(1,n)+1-(n+1)}{n}= Z(1,n),
				\end{align}
				as $N(1,n)$ is $\mathcal{H}_n$-measurable. Hence,  $\{Z(1,n)\}_n$ is a supermartingale and in order to apply
				supermartingale convergence theorem to $\{Z(1,n)\}_n$, it remains to prove that
				\begin{align*}
					\sup_n\bbE(|Z(1,n)|)<\infty.
				\end{align*}
				This is true as
				\begin{align}
					\bbE(|Z(1,n)|)=\bbE\big[\bbE\bigl(|Z(1,n)|\bigr| \mathcal{H}_{n-1}\bigr)\big]\leq\frac{1}{n}(\bbE\vec N(1,n-1)+1+n)<\infty,
				\end{align}
				having used that $\vec N(k,n)/n\leq1$, for any $n\geq1$. 

				When $k\geq2$, i.e.,  for (\ref{nm1k}) and (\ref{nmk}), note first  that  if $f(n)=O(k/n)$, then there exists
				$M>0$ such that  $|f(n)|=Mk/n+|\delta|$,  where $|\delta|<k/n$. Since $k/n\leq 1$, then there exists $M$ such that
				$|f(n)|\leq M+1$.  Thus,  take $Z(k,n)=[\vec N(k,n)-n(c+1)]/n$, with $c=M+1$,  then by  (\ref{2831}) we also get
				that $\{Z(k,n)\}_n$ is a supermartingale, and similarly as we did above we also show that $\sup_n\bbE(|Z(k,n)|)<\infty$.
				In this manner we have proved that  $Z(k,n)$  converges almost surely,
				thus $\vec N(k,n)/n$ converges almost surely.  In perfect analogy we can
				prove that   $mkZ(k,n)/(m+1)$ converges almost surely, and thus obtain that $mk\vec N(k,n)/n(m+1)$ also converges
				almost surely.
			\end{proof}

			In order to determine such a limit, we still need to prove the following lemma.
			\begin{lem}
				\label{lema4}
				Let $p_k:=\lim_{n\longrightarrow\infty}\bbE \vec N(k,n)/n$. Then,
				\begin{align}
					\label{pkk}
					p_k=\frac{(1+1/m)\Gamma(k)\Gamma(2+1/m)}{\Gamma(k + 2 + 1/m)}, \qquad k \ge 1.
				\end{align}
			\end{lem}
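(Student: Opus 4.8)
\textit{Proof sketch.} The plan is to pass to the limit in the recurrences of Lemma~\ref{lema2} and to solve the resulting recursion for $p_k$. First note that the numbers $p_k$ are well defined: by Lemma~\ref{lema3} the ratio $\vec N(k,n)/n$ converges almost surely, and since $0\le \vec N(k,n)/n\le 1$, bounded convergence gives that $\bbE\vec N(k,n)/n$ converges as well; call its limit $p_k$. The main point to keep in mind is that one cannot simply divide the recurrences by $n$ and let $n\to\infty$: the forcing terms become negligible at that scale and one recovers only the trivial identity $p_k=p_k$. Instead I would rewrite each recurrence as a relation for the increments $\bbE\vec N(k,n+1)-\bbE\vec N(k,n)$ and apply the Ces\`aro/Stolz theorem (if $a_{n+1}-a_n\to L$ then $a_n/n\to L$).

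For $k=1$, combining (\ref{nm1k1}) (case $m=1$) and (\ref{nmk1}) (case $m>1$) gives
\begin{align*}
\bbE\vec N(1,n+1)-\bbE\vec N(1,n)=1-\frac{m}{(m+1)(n+1)-1}\,\bbE\vec N(1,n)+O(1/n).
\end{align*}
Since $\tfrac{m\,n}{(m+1)(n+1)-1}\to\tfrac{m}{m+1}$ and $\bbE\vec N(1,n)/n\to p_1$, the right-hand side converges to $1-\tfrac{m}{m+1}p_1$, so Ces\`aro yields $p_1=1-\tfrac{m}{m+1}p_1$, i.e.\ $p_1=\tfrac{m+1}{2m+1}=\tfrac{1+1/m}{2+1/m}$, which is (\ref{pkk}) for $k=1$.

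For $k\ge 2$ I would proceed by induction on $k$. Writing (\ref{nm1k}) and (\ref{nmk}) in the unified form
\begin{align*}
\bbE\vec N(k,n+1)-\bbE\vec N(k,n)=\frac{(k-1)m\,\bbE\vec N(k-1,n)-km\,\bbE\vec N(k,n)}{(m+1)(n+1)-1}+O(k/n),
\end{align*}
and passing to the limit as above (with $k$ fixed, so $O(k/n)\to 0$), the right-hand side tends to $\tfrac{(k-1)m}{m+1}p_{k-1}-\tfrac{km}{m+1}p_k$; Ces\`aro then gives $p_k=\tfrac{(k-1)m}{m+1}p_{k-1}-\tfrac{km}{m+1}p_k$, hence
\begin{align*}
p_k=\frac{(k-1)m}{(m+1)+km}\,p_{k-1}=\frac{k-1}{k+1+1/m}\,p_{k-1},\qquad k\ge2.
\end{align*}
Telescoping this relation from $p_1=\tfrac{1+1/m}{2+1/m}$ and using $\Gamma(x+1)=x\Gamma(x)$ to rewrite $\prod_{j=2}^{k}(j-1)=\Gamma(k)$ and $\prod_{j=2}^{k}(j+1+1/m)=\Gamma(k+2+1/m)/\Gamma(3+1/m)$ yields (\ref{pkk}); equivalently one checks directly that the proposed closed form satisfies $p_k/p_{k-1}=(k-1)/(k+1+1/m)$ and the correct base case, so the identity follows by induction.

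The step I expect to be the most delicate is the justification of the limit transitions: one must control the $O(1/n)$ and $O(k/n)$ error terms carefully enough to be sure the increment sequences actually converge (which is what Ces\`aro requires), and one must respect the order of the induction, since $p_{k-1}$ has to be known to exist and to equal the claimed value before $p_k$ is treated. The existence of all the relevant limits is, however, already supplied by Lemma~\ref{lema3}, so this part is bookkeeping rather than a genuinely new difficulty.
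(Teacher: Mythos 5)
Your argument is correct, and it reaches the recursion $p_1=\tfrac{m+1}{2m+1}$, $p_k=\tfrac{(k-1)m}{(m+1)+km}\,p_{k-1}$ by a genuinely different route from the paper. The paper does not pass to the limit in the increments: it unrolls the unified recurrence $\bbE\vec N(k,n+1)=g(k-1,n)+(1-b/n)\bbE\vec N(k,n)+\mathscr{E}_n$ (with $b=km/(m+1)$) into an explicit sum $\sum_{s}g(k-1,s)(s/n)^b$, estimates the products $\prod_r(1-b/r)$ via Lemma~\ref{lema1} (handling separately the initial range $s<\lfloor b\rfloor$ where that lemma does not apply), and then evaluates $\sum_s s^b\sim n^{b+1}/(b+1)$ to land on $p_k=g(k-1)/(b+1)$, which is the same fixed-point relation you obtain. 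Your Stolz--Ces\`aro shortcut --- observe that $a_{n+1}-a_n\to c-b\,p$ once $a_n/n\to p$ is known from Lemma~\ref{lema3} plus bounded convergence, then conclude $p=c-b\,p$ --- bypasses all of the product and Euler--Maclaurin bookkeeping and is logically sound: with $k$ fixed the $O(k/n)$ terms vanish, the coefficient $\tfrac{mn}{(m+1)(n+1)-1}\to\tfrac{m}{m+1}$, and the existence of $p_{k-1}$ and $p_k$ is exactly what Lemma~\ref{lema3} supplies, so the increments genuinely converge and Ces\`aro applies. What the paper's heavier computation buys in exchange is a quantitative rate, $\bbE\vec N(k,n)/n=p_k+O(\ln n/n)$ (or $O(k\ln n/n)$ for $m>1$), which your argument does not produce; that rate is not used elsewhere in the proof of Theorem~\ref{newman}, so for the purposes of establishing \eqref{pkk} your more elementary derivation suffices. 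One small caveat: be explicit that you only need the \emph{existence} of $p_{k-1}$ (not its value) to derive the relation $p_k=\tfrac{k-1}{k+1+1/m}p_{k-1}$, since Lemma~\ref{lema3} gives existence for all $k$ at once; the induction then only enters when you telescope from the base case.
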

			\begin{proof}
				Observe that for a function $f(k)$, 
				\begin{align}
					\label{taniguchi}
					\frac{mf(k)}{(n+1)(m+1)-1}=\frac{mf(k)}{n(m+1)}\Big(1-\frac{m}{(n+1)(m+1)-1}\Big)=\frac{mf(k)}{n(m+1)}
					+O\Big(\frac{f(k)}{n^2}\Big).
				\end{align}
				By using \eqref{taniguchi} we can write the equations (\ref{nm1k1}), (\ref{nmk1}), (\ref{nm1k}) and (\ref{nmk})
				as follows.
				For $m=1$, $k=1$,
				\begin{align}
					\label{1}
					\bbE\vec N(1,n+1)= 1+\Big(1-\frac{1}{n(m+1)}\Big)\bbE\vec N(1,n)+O\Big(\frac{1}{n}\Big),
				\end{align}
				for $m>1$, $k=1$,
				\begin{align}
					\label{3}
					\bbE\vec N(1,n+1)= 1+\Big(1-\frac{m}{n(m+1)}\Big)\bbE\vec N(1,n)+O\Big(\frac{1}{n}\Big),
				\end{align}
				for $m=1$, $k\geq 2$,
				\begin{align}
					\label{2}
					\bbE\vec N(k,n+1)= \frac{(k-1)\bbE\vec N(k-1,n)}{n(m+1)}+\Big(1-\frac{k}{n(m+1)}\Big)\bbE\vec N(k,n)+O\Big(\frac{1}{n}\Big),
				\end{align}
				and for $m>1$, $k\geq 2$,
				\begin{align}
					\label{4}
					\bbE\vec N(k,n+1)= \frac{m(k-1)\bbE\vec N(k-1,n)}{n(m+1)}+\Big(1-\frac{mk}{n(m+1)}\Big)\bbE\vec N(k,n)+O\Big(\frac{k}{n}\Big).
				\end{align}
				Looking at (\ref{1}), (\ref{2}), (\ref{3}) and (\ref{4}), we remark that they can be unified as
				\begin{align}
					\label{g5}
					\bbE\vec N(k,n+1)= g(k-1,n)+\Big(1-\frac{b}{n}\Big)\bbE\vec N(k,n)+ \mathscr{E}_n,
				\end{align}
				where $b=km/(m+1)$,  $g(0,n)=1$,  $g(k-1,n)=(mk/n(m+1))\bbE\vec N(k,n)$ for $k\geq2$,  and $\mathscr{E}_n= O(1/n)$
				if $m=1$ and of order  $O(k/n)$  if $m>1$. We underline that $k$ could be a function of $n$ and hence in general
				$O(k/n)$ can be different of $O(1/n)$.

				Note now that when the first complete vertex appears, it has in-degree equal to  $(m+1)$, so $\vec N(k,1)=0$
				for any $k\neq (m+1)$, and $\vec N(m+1,1)=1$.  Iterating (\ref{g5}) we have,  if $k\neq (m+1)$, 
				\begin{align}
					\label{gk}
					\bbE\vec N(k,n+1)= \sum_{i=0}^{n-1}g(k-1,n-i)\prod_{j=0}^{i-1}\Big(1-\frac{b}{n-j}\Big)+\sum_{i=0}^{n-1} \mathscr{E}_{n-i},
				\end{align}
				while, if $k=m+1$,
				\begin{align}
					\label{gm}
					\bbE\vec N(k,n+1)= \sum_{i=0}^{n-1}g(k-1,n-i)\prod_{j=0}^{i-1}\Big(1-\frac{b}{n-j}\Big)+\prod_{j=0}^{n-1}
					\Big(1-\frac{b}{n-j}\Big)+\sum_{i=0}^{n-1} \mathscr{E}_{n-i}.
				\end{align} 
				To solve  (\ref{gm}), 
				let $s=n-i$ and $r=n-j$ so that
				\begin{align*}
					\prod_{j=0}^{i-1}\Big(1-\frac{b}{n-j}\Big) = \prod_{r=s+1}^{n}\Big(1-\frac{b}{r}\Big),
				\end{align*}
				then  observe that if  $s< \lfloor{b}\rfloor$,
				\begin{align*}
					\prod_{r=s+1}^{n}\Big(1-\frac{b}{r}\Big)
					= \prod_{r=s+1}^ {\lfloor{b}\rfloor}\Big(1-\frac{b}{r}\Big)\prod_{r=\lfloor{b}\rfloor+1}^{n}\Big(1-\frac{b}{r}\Big),
				\end{align*}
				which is equal either to 0 if  $b=\lfloor{b}\rfloor$ or to
				\begin{align*}
					(-1)^{\lfloor{b}\rfloor}\prod_{i=1}^{\lfloor{b}\rfloor}
					\frac{(b-i)}{(\lfloor{b}\rfloor-i+1)}\prod_{r=\lfloor{b}\rfloor+1}^{n}\Big(1-\frac{b}{r}\Big),
				\end{align*}
				if  $b\neq \lfloor{b}\rfloor$.
				Applying  Lemma \ref{lema1}  (note that to apply this lemma is necessary to have $b/r<1$, i.e., $r\geq \lfloor{b}\rfloor+1 $)
				we have
				\begin{align*}
					\label{prod2}
					\prod_{r=s+1}^{n}\Big(1-\frac{b}{r}\Big)=
					\begin{cases}
						0, & s<\lfloor{b}\rfloor, \: b=\lfloor{b}\rfloor, \\
						O\big(\frac{\lfloor{b}\rfloor}{n}\big)^{\lfloor{b}\rfloor}, & s<\lfloor{b}\rfloor, \: b\neq\lfloor{b}\rfloor,\\
						\big(1+O\big(\frac{n-s}{sn}\big)\big)\big(\frac{s}{n}\big)^b, & s\geq\lfloor{b}\rfloor.
					\end{cases}
				\end{align*}
				Using this  and (\ref{1/r}), formula (\ref{gm}) can be written as
				\begin{align}
					\bbE\vec N(k,n+1) & = \sum_{s=\lfloor{b}\rfloor}^{n}g(k-1,s)
					\Big(\frac{s}{n}\Big)^b\Big(1+O\Big(\frac{n-s}{sn}\Big)\Big)
					+O\Big(\frac{\lfloor{b}\rfloor}{n}\Big)^{\lfloor{b}\rfloor}+\mathscr{E}\nonumber\\
					& = \sum_{s=\lfloor{b}\rfloor}^{n}g(k-1,s)\Big(\frac{s}{n}\Big)^b+\mathscr{E},
				\end{align}
				where the error term $\mathscr{E}$ is of order  $O(\ln n)$ if $m=1$ and  of order  $O(k\ln n)$  if $m>1$.
				It is not difficult to see that, following a similar procedure,  we can  get the same solution for  (\ref{gk}).

				Now, by Lemma \ref{lema3} we know  that there exist some $N_1(k)>0$ and some $N_2(k)>0$,
				such that $\vec N(k,n)/n\longrightarrow N_1(k)$ and $mk\vec N(k,n)/[n(m+1)]\longrightarrow N_2(k)$ almost surely
				(observe  that in order to guarantee $a.s.$ convergence, we will need to take $k$ independent of $n$,
				hence we will obtain $N_1(k)$ and $N_2(k)$ strictly positive).
				Thus,  by the dominated convergence theorem
				$p_k:=\lim_{n\longrightarrow} \bbE\vec N(k,n)/n$,
				and  for $k\geq 2$, $g(k-1):=\lim_{n\longrightarrow}g(k-1,n)$, exist.
				
				Note that $g(k-1)=m(k-1)p_{k-1}/(m+1)$, and 
				let us write $g(k-1,n)=g(k-1)+O(\varepsilon_n)$, where $\varepsilon_n\longrightarrow 0$ as $n\longrightarrow \infty$.  Hence,
				\begin{align}
					\sum_{s=\lfloor{b}\rfloor}^{n}g(k-1,s)\Big(\frac{s}{n}\Big)^b=\frac{g(k-1)}{n^b} \sum_{s=\lfloor{b}\rfloor}^{n}
					s^b+\frac{1}{n^b} \sum_{s=\lfloor{b}\rfloor}^{n}O(\varepsilon_s) s^b,
				\end{align}
				and using that $\sum_{s=\lfloor{b}\rfloor}^{n} s^b=n^{b+1}/(b+1)+ o(n^{b+1})$ (see 3.II of \cite{IAAlg}),
				we obtain
				\begin{align}
					\label{limitE}
					\frac{\bbE\vec N(k,n+1)}{n+1}=
					\begin{cases}
						\frac{g(k-1)}{b+1}+O\left(\frac{\ln n}{n}\right), & m=1,\\
						\frac{g(k-1)}{b+1}+O\left(\frac{k\ln n}{n}\right), & m>1.
					\end{cases}
				\end{align}
				Observe that when $m>1$ we would need more restrictions on $k$ in order to determine the limit of $\bbE\vec N(k,n+1)/(n+1)$.
				Indeed it should satisfy that $k\ln n/n\longrightarrow 0$ as $n\longrightarrow\infty$, but that is
				true since we are taking $k$ fixed, i.e., independent of $n$.  Thus, by (\ref{limitE}),
				\begin{align}
					\label{limitEE}
					p_k=\lim_{n\longrightarrow\infty}\frac{\bbE\vec N(k,n+1)}{n+1}=
					\begin{cases}
						\frac{m+1}{2m+1}, & k=1,\\
						\frac{m(k-1)p_{k-1}}{m(k+1)+1}, & k>1.
					\end{cases}
				\end{align}
				Solving (\ref{limitEE}) recursively  we get (\ref{pkk}).
			\end{proof}

			\begin{proof}[Proof of Theorem \ref{newman}]
				We follow the approach of Dorogovtsev, Mendes, and Samukhin \cite{PhysRevLett.85.4633}, that uses master equations for the expected
				value of the number of vertices with in-degree $k$. To obtain the exact equations we need to consider two stages.
				For the first one we consider what happens in one step of the process, during which the number of vertices of in-degree $k$
				can be increased by counting also some vertices coming from those having previously in-degree $(k-1)$ or in-degree $(k+1)$,
				and then we consider what happens in $(m+1)$ steps, thus obtaining the change of the vertices in-degree in an interval of
				time starting when the process has $n$ vertices, until it has $(n+1)$ vertices.  This part corresponds to finding
				the equations (\ref{nm1k1}), (\ref{nmk1}), (\ref{nm1k}) and (\ref{nmk}) given in Lemma \ref {lema2}.
				For the second stage, we iterate the previous equations with respect to $n$ and obtain the limit of
				$\bbE\vec N(k,n)/n$ as $n\longrightarrow\infty$. This part was proved in Lemma \ref {lema4} determining
				(\ref{pkk}).  

				Finally, we use Azuma--Hoeffding inequality (\ref{AzHoff}) to obtain (\ref{IIPam}).   Let $\mathcal{F}_t$ be the natural
				filtration generated by the process $\{\vec N_{k,t}^{\textup{II-PA}}\}$  up to time $t$. Then, in the same way as it was
				explained for to Simon model, Section \ref{Simonmodel}, it is easy to show that for $s\leq t$,
				$Z_s^{\textup{II-PA}} = \bbE(\vec N_{k,t}^{\textup{II-PA}}|\mathcal{F}_s)$ is a martingale such that,
				$|Z_s^{\textup{II-PA}} -Z_{s-1}^{\textup{II-PA}}| \leq 1$,  $Z_t^{\textup{II-PA}}=\vec N_{k,t}^{\textup{II-PA}}$
				and $Z_0^{\textup{II-PA}}=\bbE\vec N_{k,t}^{\textup{II-PA}}$ (at time $t=0$ the random graph is the empty graph).  
				Thus  by (\ref{AzHoff}) we get that for every  $\epsilon_n \gg 1/\sqrt{n}$, e.g.\ take $\epsilon_n=\sqrt{\ln n/n}$,
				\begin{align*}
					\bbP\Big(\Big|\frac{\vec N_{k,t}^{\textup{II-PA}}}{n}-\frac{\bbE\vec N_{k,t}^{\textup{II-PA}}}{n}\Big|\geq\epsilon_n\Big)
					\leq \exp\Big(-\frac{(n \epsilon_n)^2}{2t}\Big)\longrightarrow0,
				\end{align*}
				as $n$ goes to infinity. Here $t=n(m+1)+i,$ for $i=0,1,\dots,m$. Thus we obtain that for $t=n(m+1)$,
				\begin{align*}
					\frac{\vec N_{k,t}^{\textup{II-PA}}}{n}\longrightarrow \frac{(1+1/m)\Gamma(k)\Gamma(2+1/m)}{\Gamma(k + 2 + 1/m)},
				\end{align*}
				in probability as $n\longrightarrow\infty$. However, by Lemma \ref {lema3} we actually have an almost sure convergence.
			\end{proof}

		\subsubsection{The Price model}

			\label{PriceApprox}
			Let  $M_1,M_2,\dots$ be  independent and identically distributed random variables with $\bbE(M_i)=m$,
			where $m$ is a positive rational number and $\bbV(M_i)=\sigma^2$. Furthermore, let $(\tilde{G}_m^n)_{n\geq 1}$ be the random
			graph process defining the Price model as in Section \ref{Price}  and take $k_0=1$.
			If $\vec N_{k,n}^{\textup{Price}}$ denotes the number of vertices with in-degree equal
			to  $k$  in $(\tilde{G}_m^n)_{n\geq 1}$, $k\geq1$, then
			\begin{equation}
				\label{PricePrice}
				\frac{\vec N_{k,n}^{\textup{Price}}}{n}\longrightarrow
				\frac{(1+1/m)\Gamma(k)\Gamma(2+1/m)}{\Gamma(k + 2 + 1/m)}
			\end{equation}
			almost surely as $n\longrightarrow\infty$.
				
			A rigorous analysis of the previous result can be made using Chebyschev's inequality and following the same lines as
			in the proof of Theorem \ref{newman} for the II-PA model.  Hence, we limit ourselves to present a scheme of the proof. 

			\begin{enumerate}
						
				\item In the mathematical description of the Price model, we saw that $\tilde{G}_m^{n+1}$ is formed from $\tilde{G}_m^n$ by
					adding a new vertex $v_{n+1}$ with $k_0$ directed loops, and from it a random number, $M_{n+1}$, of directed edges to
					different old vertices. This happens with probabilities proportional to their in-degrees
					as in (\ref{Priceprob}). Conditioning on the number of vertices with in-degree $k$ when there are $n$ vertices,
					we obtain
					\begin{align}
						\label{PriceEQ1}
						\bbE \vec N_{k_0,n+1}^{\textup{Price}}&= \bbE \left[\bbE(\vec N_{k_0,n+1}^{\textup{Price}}
						\mid\vec N_{k_0,n}^{\textup{Price}}) \right]\nonumber\\
						&=\bbE \left[1+(\vec N_{k_0,n}^{\textup{Price}}-1)\frac{M_{n+1}k_0\vec
						N_{k_0,n}^{\textup{Price}}}{nk_0+\sum_{i=1}^n M_{i}}
						+\vec N_{k_0,n}^{\textup{Price}}\left(1-\frac{M_{n+1}k_0\vec
						N_{k_0,n}^{\textup{Price}}}{nk_0+\sum_{i=1}^n M_{i}}\right)\right]\nonumber\\
						&= 1+\bbE\left[\left(1-\frac{M_{n+1}k_0}{nk_0+\sum_{i=1}^n M_{i}} \right)\vec N_{k_0,n}^{\textup{Price}}\right],
					\end{align}
					and, for $k>k_0$,
					\begin{align}
						\label{PriceEQ2} 
						\bbE \vec N_{k,n+1}^{\textup{Price}}
						= {} &  \bbE \left[(\vec N_{k,n}^{\textup{Price}}+1)\frac{M_{n+1}(k-1)
						\vec N_{k-1,n}^{\textup{Price}}}{nk_0+\sum_{i=1}^n M_{i}}
						+(\vec N_{k,n}^{\textup{Price}}-1)\frac{M_{n+1}k\vec N_{k,n}^{\textup{Price}}}{nk_0+\sum_{i=1}^n m_{i}}\right.\nonumber\\
						& + \left. \vec N_{k,n}^{\textup{Price}}\left(1-\frac{M_{n+1}(k-1)
						\vec N_{k-1,n}^{\textup{Price}}}{nk_0+\sum_{i=1}^n M_{i}}-\frac{M_{n+1}
						k\vec N_{k,n}^{\textup{Price}}}{nk_0+\sum_{i=1}^n M_{i}}\right)\right]\nonumber\\
						= {} & \bbE\left[ \frac{M_{n+1}(k-1)\vec N_{k-1,n}^{\textup{Price}}}{nk_0+\sum_{i=1}^nM_{i}}+\left(1-\frac{M_{n+1}k}{nk_0
						+\sum_{i=1}^n M_{i}}\right)\vec N_{k,n}^{\textup{Price}}\right].
					\end{align}
				
				\item Take $k_0=1$, $Y=n+\sum_{i=1}^n M_{i}$ and $\epsilon_n=\sqrt{\ln n/n}$. By Chebyschev's inequality,
					\begin{align}
						\label{ChebApprox}
						\bbP[|Y-n(1+m)|> n\epsilon_n]\leq\frac{\sigma^2}{\sqrt{n}\ln n}.
					\end{align}						
					Let $X$ be another random variable such that $\bbE(X/Y)$ is bounded, and $0\leq\bbE(X)\leq (k-1)mn$.
					In addition, define the event $E_n:=\{n(1+m-\epsilon_n)\leq Y\leq n(1+m+\epsilon_n)\}$.
					Conditioning on $E_n$ and applying (\ref{ChebApprox}),
					\begin{align}
						\label{EXY}
						\bbE(X/Y)\approx\bbE(X/Y\mid Y\in E_n)+ O\left(\frac{1}{\sqrt{n}\ln n} \right),
					\end{align}
					because $\bbE(X/Y)$ is bounded.
					Furthermore, note that
					\begin{align*}
						\frac{\bbE(X)}{n(1+m)}\left(1-\frac{\epsilon_n}{1+m+\epsilon_n}\right)\leq
						\bbE\left(\left. \frac{X}{Y} \right| Y\in E_n\right)
						\leq \frac{\bbE(X)}{n(1+m)}\left(1-\frac{\epsilon_n}{1+m-\epsilon_n}\right),
					\end{align*}
					thus $\bbE\left(X/Y\mid Y\in E_n\right)=\frac{\bbE(X)}{n(1+m)}+O(\sqrt{\ln n/n})$. Replacing this in (\ref{EXY}) we have
					\begin{align}
						\label{EXYapprox}
						\bbE(X/Y)\approx \frac{\bbE(X)}{n(1+m)}+O(\sqrt{\ln n/n}).
					\end{align}	
					Using (\ref{EXYapprox}) in (\ref{PriceEQ1}) with $X=M_{n+1}\vec N_{1,n}^{\textup{Price}}$
					and in (\ref{PriceEQ2}) with $X=M_{n+1}(k-1)\vec N_{k-1,n}^{\textup{Price}}$
					and $X=M_{n+1}k\vec N_{k,n}^{\textup{Price}}$, respectively, we get 
					from (\ref{PriceEQ1}) and (\ref{PriceEQ2}) that
					\begin{align}
						\label{PriceEQ11}
						\bbE \vec N_{k_0,n+1}^{\textup{Price}}&\approx
						1+\left(1-\frac{m}{n(1+m)} \right)\bbE\vec N_{k_0,n}^{\textup{Price}}+O(\sqrt{\ln n/n}),
					\end{align}
					and, for $k>1$,
					\begin{align}
						\label{PriceEQ22} 
						\bbE \vec N_{k,n+1}^{\textup{Price}}
						&\approx \frac{m(k-1)\bbE\vec N_{k-1,n}^{\textup{Price}}}{n(1+m)}
						+\left(1-\frac{mk}{n(1+m)}\right)\bbE\vec N_{k,n}^{\textup{Price}}+O(\sqrt{\ln n/n}).
					\end{align}
							
				\item Note now that  (\ref{PriceEQ11}) and (\ref{PriceEQ22}) are almost the same as	(\ref{nmk1})
					and (\ref{nmk}) for the II-PA model, respectively. In order to derive (\ref{PricePrice}) we then proceed
					as in the proof of Theorem \ref{newman}.  More specifically, to ensure the existence of the limit value of
					$\vec N_{k,n}^{\textup{Price}}/n$ as $n\rightarrow\infty$, we use supermartingale's convergence theorem
					(see \cite {BillingsleyMeasure95}, Theorem 35.5), in analogy to Lemma \ref{lema3}.
					Then we find that 
					\begin{align}
						\label{pkkPrice}
						\lim_{n\rightarrow\infty}\frac{\bbE \vec N_{k,n}^{\textup{Price}}}{n}=\frac{(1+1/m)\Gamma(k)
						\Gamma(2+1/m)}{\Gamma(k + 2 + 1/m)},
						\qquad k \ge 1,
					\end{align}
					as in Lemma \ref{lema4}. Finally by  Azuma--Hoeffding inequality (\ref{AzHoff}) we obtain 
					\begin{align*}
						\frac{\vec N_{k,n}^{\textup{Price}}}{n}\rightarrow\frac{(1+1/m)\Gamma(k)\Gamma(2+1/m)}{\Gamma(k + 2 + 1/m)}, \qquad k \ge 1,
					\end{align*} 
					in probability as $n\rightarrow\infty$. By Lemma \ref{lema3} the result follows almost surely.
				\end{enumerate}

				Notice that the Price model
				is by definition equivalent to the II-PA if $M_i=m=1$ almost surely. Moreover, Price and II-PA models have the same limit
				in-degree distribution when $\bbE(M_i)=m$.
						
		\subsection{Relation between Simon and Yule models}
		
			\label{Relation2}
			Bearing in mind the construction of Yule model as explained in Section \ref{YuleDef}, we underline that the inter-event times
			of in-links appearance and those related to creation of new vertices are exponentially distributed. In order to relate
			Yule and Simon models we investigate here the inter-event times characterizing Simon model showing that a suitable rescaling
			in the limit leads to exponential random variables. 
			The idea is to identify two different processes which conditionally describe Simon model, and clarify
			how these are related with the two Yule processes which define a Yule model.   

			The next theorem together with Remarks \ref{FirstYule} and \ref{SecondYule} allows us to recognize the first process inside
			a Simon model behaving asymptotically  as a Yule process with parameter $(1-\alpha)$, while Theorem \ref{S-Yule2} and
			Remark \ref{ObsS-Yule2} determine the second process which behaves  asymptotically  as a Yule process with parameter equal to one.
			The first process models how the vertices get new in-links, thus at each moment a new vertex appears, a process starts.
			On the other hand, the second process is related to how the vertices appear.

			Let  $(G_{\alpha}^t)_{t\geq1}$ be the random graph process associated to Simon model of parameter $\alpha$, as  described in
			Subsection \ref{Simonmodel}, and let  $\{ \vec d(v_i,t)\}_{t\geq t^i_0}$ be the in-degree process associated to the vertex $v_i$,
			which appears at time $t_0^i$, i.e., $t^i_0=\min \{t \colon \vec d(v_i,t)=1\}$ (note that $\vec d(v_i,t^i_0)=1$ as the vertex
			appears together with a directed loop in the model).  

			Our first focus is on the study of the distributions of  the waiting times between the instant in which each vertex has in-degree $k$,
			till that in which it has in-degree $k+1$. Formally, we study  the distribution of the random variables
			$W_k^i=t^i_k-t^i_{k-1}$, $k \ge 1$, where $t^i_j=\min \{t: \vec d(v_i,t)=j+1\}$ for $j=0,1,2,\dots$\,. 

			\begin{teo}
				\label{S-Yule}
				Let  $z=\ln\Big(1+\frac{x}{t^i_{k-1}-1}\Big)$, $k \ge 1$, $x > 0$. It holds
				\begin{equation}
					\Big|\bbP(W_k^i\leq x)-\bbP(Z_k^i\leq z)\Big|<O\Big(\frac{1}{t^i_{k-1}}\Big),
				\end{equation}
				where $Z_k^i$ is an exponential random variable of parameter $(1-\alpha) k$. 
			\end{teo}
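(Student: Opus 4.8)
The plan is to write the tail probability of $W_k^i$ as an explicit finite product and then apply Lemma~\ref{lema1}. Fix $i$ and $k$ and condition on $\{t^i_{k-1}=s\}$, so that $\vec d(v_i,s)=k$ and $z=\ln(1+x/(s-1))$ becomes deterministic. The structural fact that drives everything is that, by the attachment rule (\ref{parule}), at every step $r\to r+1$ the probability that $v_i$ gains a new incoming edge, given the whole configuration of $G_\alpha^r$, equals $(1-\alpha)\vec d(v_i,r)/r$ (with probability $\alpha$ a new vertex with its own loop is added, which does not touch $\vec d(v_i,\cdot)$, and with probability $1-\alpha$ the added directed edge points to $v_i$ with probability $\vec d(v_i,r)/r$); in particular this probability depends on the past only through $\vec d(v_i,r)$ and the deterministic clock $r$. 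Since at most one edge is created per unit of time, $\vec d(v_i,\cdot)$ increases by at most one at a time, so on $\{t^i_{k-1}=s\}$ the event $\{W_k^i>j\}$ coincides with the event that $v_i$ receives no incoming edge at any of the steps $r=s,s+1,\dots,s+j-1$. Multiplying the successive one-step conditional probabilities gives
\[
\bbP(W_k^i>j\mid t^i_{k-1}=s)=\prod_{r=s}^{s+j-1}\Big(1-\frac{(1-\alpha)k}{r}\Big),\qquad j\in\bbZ^+ .
\]
Note that $(1-\alpha)k<k\le s=t^i_{k-1}$ (which holds as soon as $(i,k)\neq(1,1)$), so every factor lies strictly in $(0,1)$ and Lemma~\ref{lema1} applies with $b=(1-\alpha)k$.

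Next I would invoke Lemma~\ref{lema1} with $b=(1-\alpha)k$ and indices $(s-1,\,s+j-1)$ in place of $(s,t)$, obtaining
\[
\bbP(W_k^i>j\mid t^i_{k-1}=s)=\Big(\frac{s-1}{s+j-1}\Big)^{(1-\alpha)k}\Big(1+O\Big(\tfrac{j}{(s-1)(s+j-1)}\Big)\Big).
\]
The key observation is that this error is uniform in $j$: because $j/(s+j-1)\le 1$, one has $j/((s-1)(s+j-1))\le 1/(s-1)=O(1/t^i_{k-1})$, while the leading factor never exceeds one. For a non-integer argument $x$ one uses $\{W_k^i\le x\}=\{W_k^i\le\lfloor x\rfloor\}$ and observes that replacing $\lfloor x\rfloor$ by $x$ in the leading factor multiplies it by $1+O(1/t^i_{k-1})$ (since $0\le x-\lfloor x\rfloor<1$ and $k$ is fixed), hence perturbs it by $O(1/t^i_{k-1})$. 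Finally, since $e^{-z}=(s-1)/(s-1+x)$,
\[
\Big(\frac{s-1}{s-1+x}\Big)^{(1-\alpha)k}=e^{-(1-\alpha)kz}=\bbP(Z_k^i>z),\qquad Z_k^i\sim\text{Exp}\big((1-\alpha)k\big).
\]
Collecting the three displays, using $\bbP(Z_k^i>z)\le 1$, and passing to complements gives $|\bbP(W_k^i\le x)-\bbP(Z_k^i\le z)|=O(1/t^i_{k-1})$ on $\{t^i_{k-1}=s\}$ for each value of $s$, which is the claim.

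The one point that needs a little care is the bookkeeping of the error terms: one must verify that they remain $O(1/t^i_{k-1})$ \emph{uniformly in $x$} (in particular as $x\to\infty$), not merely for bounded $x$, and the elementary bound $j/(s+j-1)\le 1$ is precisely what yields this uniformity for the error produced by Lemma~\ref{lema1}. One should also exclude (or handle separately) the degenerate case $i=k=1$, in which $t^i_{k-1}-1=0$ and $z$ is not defined; this case is immaterial for the asymptotic use of the theorem in Section~\ref{Relation2}.
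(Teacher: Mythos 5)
Your proof is correct, but it takes a genuinely different route from the paper's. The paper computes the probability mass function $\bbP[W_k^i=x]$ as a product (one ``success'' factor times $x-1$ ``failure'' factors), applies Lemma~\ref{lema1} to each such product, and then sums over $w=1,\dots,x$, using the Euler--Maclaurin formula to compare the resulting sum $\sum_w (t^i_{k-1}+w-1)^{-k(1-\alpha)-1}$ with an integral and telescoping it into $1-\exp\bigl(-k(1-\alpha)\ln(1+x/(t^i_{k-1}-1))\bigr)$. You instead work directly with the survival function $\bbP(W_k^i>j\mid t^i_{k-1}=s)=\prod_{r=s}^{s+j-1}\bigl(1-\tfrac{(1-\alpha)k}{r}\bigr)$, which is a \emph{single} product, apply Lemma~\ref{lema1} once, and pass to complements. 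This is cleaner: it eliminates the summation and the Euler--Maclaurin step entirely, and the uniformity of the error in $x$ (via $j/(s+j-1)\le 1$) becomes transparent, whereas in the paper's version one must track how the per-term errors accumulate through the sum. Your explicit conditioning on $\{t^i_{k-1}=s\}$ also makes precise something the paper leaves implicit (the theorem's bound and the definition of $z$ both involve the random time $t^i_{k-1}$), and your remarks on the degenerate case $i=k=1$ and on why each factor lies in $(0,1)$ are points the paper glosses over. The only ingredient common to both arguments is the identification of the one-step conditional probability $(1-\alpha)\vec d(v_i,r)/r$ from~(\ref{parule}) and the use of Lemma~\ref{lema1}; everything downstream differs, and your version arrives at the same error rate $O(1/t^i_{k-1})$ with less machinery.
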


			\begin{obs}
				\label{FirstYule}
				Theorem \ref{S-Yule} states  that
				for any $t^*$ large enough but fixed, 
				\begin{equation}
					\Big|\bbP(W_k^j\leq x)-\bbP(Z_k^j\leq z)\Big|<O\Big(\frac{1}{t^*}\Big),
				\end{equation}
				$\forall j\geq \min\{i \colon t^i_0\geq t^*\} $, and for $k\geq1$. This means that from a fixed but large time $t^*$, all the
				waiting times $W_k^j$ are approximately exponential random variables, with an error term smaller than
				$O\left(1/t^*\right)$. 
			\end{obs}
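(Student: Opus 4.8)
The plan is to compute the conditional law of $W_k^i$ exactly and then match it, term by term, with the exponential law by means of Lemma \ref{lema1}. Fix $k$ and $x>0$, condition on the event $\{t^i_{k-1}=s\}$ together with the history $\mathcal{F}_s$ of the process up to time $s$, and write $b=(1-\alpha)k$; on this event $\vec d(v_i,s)=k$, and $z=\ln(1+x/(s-1))$ is deterministic. The key observation is that at each later step $t\to t+1$ with $t\geq s$, as long as $v_i$ has not yet gained a new in-link (so $\vec d(v_i,t)=k$), the attachment rule (\ref{parule}) sends the edge created at that step to $v_i$ with probability $(1-\alpha)\vec d(v_i,t)/t=b/t$; here one uses that the total in-degree of $G_\alpha^t$ equals $t$, since every step of the process adds exactly one unit of in-degree. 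The chain rule then gives
\begin{align*}
	\bbP\bigl(W_k^i>m \mid \mathcal{F}_s\bigr)=\prod_{t=s}^{s+m-1}\Bigl(1-\frac{b}{t}\Bigr),\qquad m\geq 0 .
\end{align*}

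Next I would apply Lemma \ref{lema1} to this product. The dynamics force $s=t^i_{k-1}\geq k>b$ (since $1-\alpha<1$), so $b/r<1$ for every $r\geq s$ and the lemma applies, re-indexed with lower end $s-1$ and upper end $s+m-1$, giving
\begin{align*}
	\bbP\bigl(W_k^i>m \mid \mathcal{F}_s\bigr)=\Bigl(\frac{s-1}{s+m-1}\Bigr)^{b}\Bigl(1+O\Bigl(\frac{m}{(s-1)(s+m-1)}\Bigr)\Bigr).
\end{align*}
Taking $m=\lfloor x\rfloor$ and using that $x$ is fixed (so $m=x+O(1)$) while $b$ is a constant, one has $s+m-1=(s-1+x)\bigl(1+O(1/s)\bigr)$, whence
\begin{align*}
	\Bigl(\frac{s-1}{s+m-1}\Bigr)^{b}=\Bigl(\frac{s-1}{s-1+x}\Bigr)^{b}+O\Bigl(\frac{1}{s}\Bigr),
\end{align*}
the error term from the lemma being $O(1/s^{2})$.

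On the other side, the choice $z=\ln(1+x/(s-1))$ gives $e^{-z}=(s-1)/(s-1+x)$, so that $\bbP(Z_k^i\leq z)=1-e^{-bz}=1-\bigl((s-1)/(s-1+x)\bigr)^{b}$. Since $\bbP(W_k^i\leq x)=1-\bbP(W_k^i>\lfloor x\rfloor\mid\mathcal{F}_s)$, subtracting the two expressions makes the main terms cancel and leaves exactly $O(1/s)=O(1/t^i_{k-1})$, which is the assertion.

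The two asymptotic manipulations are routine. The points that need care are (i) the derivation of the exact product formula for $\bbP(W_k^i>m)$ — in particular that the normalisation in (\ref{parule}) is precisely $t$ at time $t$, and that conditioning on $\{t^i_{k-1}=s\}$ really does leave the future a genuine time-inhomogeneous Bernoulli chain with the stated success probabilities — and (ii) the replacement of the integer $\lfloor x\rfloor$ by the real $x$ inside the $b$th power. Point (ii) is where the constant $z$ in the statement has been tuned, with $t^i_{k-1}-1$ rather than $t^i_{k-1}$ in the logarithm, precisely so that the discrete product's leading term cancels against $e^{-bz}$ and only the $O(1/t^i_{k-1})$ remainder survives; this is visible for $x<1$, where $\bbP(W_k^i\leq x)=0$ identically while $\bbP(Z_k^i\leq z)=1-(1+x/(s-1))^{-b}=O(1/t^i_{k-1})$. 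I expect (i) to be the main conceptual step and (ii) the main computational one, though both are short.
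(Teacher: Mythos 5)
Your proposal is correct and arrives at the same estimate as the paper, but by a somewhat cleaner route. The paper's proof of Theorem \ref{S-Yule} computes the probability mass function $\bbP(W_k^i=w)$ as a product, applies Lemma \ref{lema1} to it, and then sums over $w=1,\dots,x$, invoking an Euler--Maclaurin estimate for $\sum_j j^{-(k(1-\alpha)+1)}$ to recover the distribution function. You instead work with the survival function $\bbP(W_k^i>m)=\prod_{t=s}^{s+m-1}(1-b/t)$, which collapses the whole computation into a single application of Lemma \ref{lema1} and dispenses with the Euler--Maclaurin summation altogether; the cancellation against $1-e^{-bz}$ is then immediate from the choice of $z$, and your observation about the case $x<1$ correctly explains why the shift by $t^i_{k-1}-1$ in $z$ is the right normalisation. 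Your explicit conditioning on $\{t^i_{k-1}=s\}$ together with $\mathcal{F}_s$ is also more careful than the paper, which manipulates $t^i_{k-1}$ as if it were deterministic; the justification that the future is a genuine time-inhomogeneous Bernoulli chain (because \eqref{parule} depends only on $\vec d(v_i,t)$ and $t$, and the normalisation is exactly $t$) is exactly the point that needed to be made. The remark itself then follows from the theorem precisely as you use it, via $t^j_{k-1}\ge t^j_0\ge t^*$. One caveat, shared equally by the paper's argument and yours: the implied constant in $O(1/t^i_{k-1})$ depends on $b=(1-\alpha)k$ and on $x$, so the assertion ``for $k\ge 1$'' in the remark should be read with the constant allowed to depend on $k$ rather than as a uniform bound.
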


			\begin{proof}[Proof of Theorem \ref{S-Yule}]
				By the preferential attachment probabilities (\ref{parule}) of $(G_{\alpha}^t)_{t\geq1}$, for $x\geq1$, we have
				\begin{align}
					\label{Wx}
					\bbP[W_k^i=x]&=\Big(\frac{\vec d(v_i,t^i_{k-1})(1-\alpha)}{t^i_{k-1}+x-1}\Big)
					\Big(1-\frac{\vec d(v_i,t^i_{k-1})(1-\alpha)}{t^i_{k-1}+x-2}\Big)
					\dots\Big(1-\frac{\vec d(v_i,t^i_{k-1})(1-\alpha)}{t^i_{k-1}}\Big)\nonumber\\
					&=\Big(\frac{k(1-\alpha)}{t^i_{k-1}+x-1}\Big)\Big(1-\frac{k(1-\alpha)}{t^i_{k-1}+x-2}\Big)\dots
					\Big(1-\frac{k(1-\alpha)}{t^i_{k-1}}\Big)\nonumber\\
					&=\Big(\frac{k(1-\alpha)}{t^i_{k-1}+x-1}\Big)\prod_{r=t^i_{k-1}}^{t^i_{k-1}+x-2}\Big(1-\frac{k(1-\alpha)}{r}\Big),
				\end{align}
				as $t^i_{k-1}>k$. Then $k(1-\alpha)/r<1$, so we can apply Lemma \ref{lema1} to the product to obtain 
				\begin{align}
					\label{prod}
					\prod_{r=t^i_{k-1}}^{t^i_{k-1}+x-2}\Big(1-\frac{k(1-\alpha)}{r}\Big)
					=\Big(\frac{t^i_{k-1}-1}{t^i_{k-1}+x-1}\Big)^{k(1-\alpha)} \Big(1+O\Big(\frac{x}{(t^i_{k-1}+x-2)(t^i_{k-1}-1)}\Big)\Big).
				\end{align}
				Thus, using (\ref{Wx}), (\ref{prod}), the Euler--Maclaurin formula,
				\begin{align*}
					\sum_{j=1}^{n}\frac{1}{j^s}=\frac{1}{n^{s-1}}-s\int_{1}^{n}\frac{\lfloor{y}\rfloor}{y^{s+1}}dy,
				\end{align*}
				with $s\in\bbR \setminus \{1\}$ (see \cite{IAAlg}) and the fact that $\lfloor{y}\rfloor\leq y$, we arrive at
				\begin{align}
					\bbP[W_k^i\leq x] = {} & \sum_{w=1}^{x} \Big(\frac{k(1-\alpha)}{t^i_{k-1}+w-1}\Big)
					\Big(\frac{t^i_{k-1}-1}{t^i_{k-1}+w-1}\Big)^{k(1-\alpha)}\Big(1+O\Big(\frac{w}{(t^i_{k-1})^2+wt^i_{k-1}}\Big)\Big)\nonumber\\
					= {} & k(1-\alpha)(t^i_{k-1}-1)^{k(1-\alpha)}\sum_{w=1}^{x}\Big(\frac{1}{t^i_{k-1}+w-1}\Big)^{k(1-\alpha)+1}
					\Big(1+O\Big(\frac{w}{(t^i_{k-1})^2+wt^i_{k-1}}\Big)\Big)\nonumber\\
					< {} & \Big(1+O\Big(\frac{1}{t^i_{k-1}}\Big)\Big)k(1-\alpha)(t^i_{k-1}-1)^{k(1-\alpha)}
					\sum_{j=t^i_{k-1}}^{t^i_{k-1}+x-1}\Big(\frac{1}{j}\Big)^{k(1-\alpha)+1}\nonumber\\
					= {} & \Big(1+O\Big(\frac{1}{t^i_{k-1}}\Big)\Big)k(1-\alpha)(t^i_{k-1}-1)^{k(1-\alpha)}\nonumber\\
					& \times \Big(\frac{1}{(t^i_{k-1}+x-1)^{k(1-\alpha)}}-
					\frac{1}{(t^i_{k-1})^{k(1-\alpha)}}+(k(1-\alpha)+1)
					\int_{t^i_{k-1}}^{t^i_{k-1}+x-1}\frac{\lfloor{y}\rfloor}{y^{k(1-\alpha)+2}}dy\Big)\nonumber\\
					< {} & \Big(1+O\Big(\frac{1}{t^i_{k-1}}\Big)\Big)(t^i_{k-1}-1)^{k(1-\alpha)}
					\Big[\frac{1}{(t^i_{k-1}-1)^{k(1-\alpha)}}-\frac{1}{(t^i_{k-1}+x-1)^{k(1-\alpha)}}\Big]\nonumber\\
					= {} & \Big(1+O\Big(\frac{1}{t^i_{k-1}}\Big)\Big)\Big[1-\exp\Big[-k(1-\alpha)\ln\Big(1+\frac{x}{t^i_{k-1}-1}\Big)\Big]\Big].
				\end{align}
				Thus, we get
				\begin{align}
					\Big|\bbP(W_k^i\leq x)-\left[1-\exp\big(-k(1-\alpha)\ln(1+x/(t^i_{k-1}-1))\big)\right]
					\Big|<O\Big(\frac{1}{t^i_{k-1}}\Big).\nonumber
				\end{align}
				Then, by taking $z=\ln(1+x/(t^i_{k-1}-1))$, it holds 
				\begin{align}
					\Big|\bbP(W_k^i\leq x)-\bbP(Z_k^i\leq z)\Big|<O\Big(\frac{1}{t^i_{k-1}}\Big),\nonumber
				\end{align}  
				where $Z_k^i$ is a  random variable exponentially distributed with parameter  $(1-\alpha)k$.
			\end{proof}

			\begin{obs}
				\label{SecondYule}
				Note that  the knowledge of $\{W_k^i\}, k\geq 1$, is equivalent to the knowledge of
				$\vec d(v_i,t)$, as $\vec d(v_i,t):=\min\{k \colon\sum_{b=1}^{k}W^i_b> (t-t^i_0)\}$. Thus, due to Theorem \ref{S-Yule},
				the process $\{ \vec d(v_j,t)\}_{t\geq t^j_0}$, $\forall j\geq \min\{i \colon t^i_0\geq t_0\} $, behaves asymptotically
				as a Yule process with parameter $(1-\alpha)$. 
			\end{obs}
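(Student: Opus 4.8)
The remark has two components: a deterministic reconstruction identity relating the inter-event times $\{W_k^i\}_{k\ge1}$ to the in-degree path $\{\vec d(v_i,t)\}_{t\ge t_0^i}$, and the asymptotic identification of that path with a Yule process of parameter $(1-\alpha)$. The plan is to treat them in turn, using Theorem \ref{S-Yule} to bridge them through the reconstruction formula (\ref{lastconst}) that recovers the Yule process from its waiting times.

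First I would verify the reconstruction identity. The process $\{\vec d(v_i,t)\}_{t\ge t_0^i}$ is non-decreasing and integer-valued, equal to $1$ at $t_0^i$ and jumping up by exactly one at each $t_k^i=\min\{t:\vec d(v_i,t)=k+1\}$. Reading the jump times off the path and setting $W_k^i=t_k^i-t_{k-1}^i$ gives the forward map; conversely the partial sums $S_k^i=\sum_{b=1}^k W_b^i=t_k^i-t_0^i$ recover the jump times, whence $\vec d(v_i,t)=\min\{k:S_k^i>t-t_0^i\}$, the stated formula. Thus the correspondence between waiting-time sequences and paths is a deterministic bijection, the discrete analogue of $N_\lambda(T)=\min\{k:Y_k>T\}$ with $Y_k=\sum_{j=1}^k W_j^*$ in (\ref{lastconst}).

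Second, I would transport Theorem \ref{S-Yule} across this bijection, which is cleanest in logarithmic time. Put $\zeta_k^i=\ln\!\big(1+W_k^i/(t_{k-1}^i-1)\big)$. Since $t_{b-1}^i-1+W_b^i=t_b^i-1$, the sums telescope exactly, $\sum_{b=1}^k\zeta_b^i=\ln\!\big((t_k^i-1)/(t_0^i-1)\big)$, and the condition $S_k^i>t-t_0^i$ is equivalent to $\sum_{b=1}^k\zeta_b^i>\ln\!\big((t-1)/(t_0^i-1)\big)$. Hence in the continuous time variable $T=\ln\!\big((t-1)/(t_0^i-1)\big)$ the in-degree is reconstructed by $\min\{k:\sum_{b=1}^k\zeta_b^i>T\}$, which is precisely the Yule reconstruction (\ref{lastconst}), and Theorem \ref{S-Yule} says that each $\zeta_k^i$ has, up to an error $O(1/t_{k-1}^i)$, the law of the exponential $Z_k^i$ of parameter $(1-\alpha)k$ — exactly the waiting-time law $W_k^*\sim\mathrm{Exp}((1-\alpha)k)$ of a Yule process of parameter $(1-\alpha)$.

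Finally I would assemble these into convergence of the process. Conditionally on the arrival time $t_{k-1}^i$, the in-degree chain is Markov, since the one-step jump probability (\ref{parule}) depends only on the current in-degree and the current time; so the $\zeta_k^i$ form a Markov sequence in which, by Theorem \ref{S-Yule}, the conditional law of $\zeta_k^i$ given $t_{k-1}^i$ approaches $\mathrm{Exp}((1-\alpha)k)$ \emph{independently} of $t_{k-1}^i$ as $t_{k-1}^i\to\infty$. By Remark \ref{FirstYule}, for every $j\ge\min\{i:t_0^i\ge t^*\}$ all the arrival times exceed $t^*$, so these conditional errors are uniformly $O(1/t^*)$; a tower-property argument then upgrades the marginal estimates to joint convergence of $(\zeta_1^j,\dots,\zeta_K^j)$ to independent exponentials of parameters $(1-\alpha),\dots,K(1-\alpha)$. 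Because the reconstruction map is a measurable functional of the waiting-time sequence, the reconstructed path $\{\vec d(v_j,t)\}$ in the logarithmic time scale converges in distribution to the Yule process $\{N_{1-\alpha}(T)\}_{T\ge0}$. I expect this last step to be the main obstacle: Theorem \ref{S-Yule} controls only the marginal conditional law of each increment, so the work lies in establishing asymptotic independence of successive increments and in verifying that the random normalizations $t_{k-1}^i$ inside the logarithm do not accumulate error — which is where the asymptotic memorylessness encoded in the logarithmic rescaling, together with the uniform $O(1/t^*)$ bound of Remark \ref{FirstYule}, must be used.
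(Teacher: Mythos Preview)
The statement in question is a \emph{Remark} in the paper, and the paper offers no proof beyond the two sentences of the remark itself: it simply pairs the reconstruction identity with Theorem~\ref{S-Yule} and asserts the conclusion. Your proposal is therefore considerably more detailed than anything the paper provides.

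Your first two steps are sound and in fact sharpen the paper's claim. The reconstruction bijection is exactly the discrete analogue of (\ref{lastconst}), and your logarithmic time change $\zeta_b^i=\ln\!\big((t_b^i-1)/(t_{b-1}^i-1)\big)$ is the right device: the telescoping identity $\sum_{b=1}^k\zeta_b^i=\ln\!\big((t_k^i-1)/(t_0^i-1)\big)$ makes precise the sense in which the Simon in-degree path, read in the time variable $T=\ln\!\big((t-1)/(t_0^i-1)\big)$, has the same reconstruction formula as a Yule process. The paper leaves this time change entirely implicit.

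Your third step is where you go beyond the paper, and you correctly flag the difficulty: Theorem~\ref{S-Yule} controls only the \emph{marginal} conditional law of each $\zeta_k^i$ given $t_{k-1}^i$, and upgrading this to joint convergence of $(\zeta_1^j,\dots,\zeta_K^j)$ toward independent exponentials requires the additional argument you sketch (Markov structure plus the uniform $O(1/t^*)$ bound of Remark~\ref{FirstYule}). The paper does not address this; its phrase ``behaves asymptotically as a Yule process'' is deliberately informal and is not meant to assert weak convergence of paths in any precise sense. So your proposal is correct as far as it goes, and the obstacle you identify is genuine---but it is an obstacle to a stronger statement than the paper actually claims.
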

 
			Let us now consider the growth of the vertices in Simon model, where at each instant of time $t$, a new vertex is created
			with a fixed probability $\alpha$. 
			This fact can be re-thought from  a different perspective as follows.  
			Remember that in Simon model the  number of vertices at time $t$ is a random variable $V(t)$, distributed Binomially,
			$\text{Bin}(t,\alpha)$, and that at each instant of time, one and only one vertex can appear. 
			Think for a moment that we know the number of vertices at time $t$, 
			then, conditionally on that,  at time $t+1$ choose uniformly at random an existing vertex, i.e., with probability $1/V(t)$
			select one vertex, and  with probability $\alpha$ duplicate it. Note that, as time increases, each existing vertex  may give
			birth to a new vertex with probability $\alpha/V(t)$.  In this way we have that  a new vertex appears with constant probability
			$\alpha$; since there are $V(t)$ vertices, then the probability of the birth of a new vertex is $V(t)(\alpha /V(t)) = \alpha$.

			Now  fix a time, take for example  $t_0^i$, the time when the $i$th vertex appears, so $V(t_0^i)=i$.
			For each of the  existing vertices at time $t_0^i$, say $v_j$, $1\leq j\leq i$,  define the birth process
			$\{D_j(t)\}_{t\geq t_0^i}$  of all its descendants  as follows.
			Start at time $t_0^i$ with one vertex, $v_j$.  Since  at time $t+1$ each existing vertex in Simon model  may give birth
			to a new vertex with probability $\alpha/V(t)$,  then if at time $t$  the number of vertices descendent
			of $v_j$ (i.e., itself $+$ its children $+$ its grandchildren $+$ etc.) is $k$, the probability that a new descendent
			of $v_j$ appears  at  time $t+1$ is $k\alpha/V(t)$.
			Formally, let  $D_j(t)$ be the total number  of descendent of $v_j$ at time $t$ with  $D_j(t_0^i)=1$ (itself),
			then if $D_j(t)=k$, $k\geq 1$, the probability that a new descendent of $v_j$ appears  at  time $t+1$ is $k\alpha/V(t)$.

			Observe that since at each time we are selecting one and only one vertex in Simon model to duplicate,
			the probability of either no duplications or more than two at each instant of time $t$ is zero. Clearly this is different from the case
			in which we had taken independent processes $\{D_j(t)\}_{t\geq t_0^i}$, therefore,  they  are dependent.
			However, by  definition,  these processes,  are equal in  distribution, i.e.,
			$\bbP(D_j(t)\leq d)$ is the same for each $1\leq j\leq i$.

			We will see in the following theorem that the  processes $\{D_j(t)\}_{t\geq t_0^i}$, $1\leq j\leq i$, converge in distribution
			to  Yule processes with parameter $1$, i.e.,  if at time $t$,  $D_j(t)=k$, $k\geq 1$, the number of steps up to see the next
			descendent of $v_j$,  converges in distribution to an exponential random variable with parameter $k$.
			Thus, starting with $i$ vertices,  we will see that from $t_0^i$, the process of appearance  of new vertices in Simon model
			approximates  $i$ dependent but identically distributed  Yule processes with parameter $1$. If the interest is to study
			the asymptotic characteristics of a uniformly chosen  random vertex in Simon model,  we could do that first by choosing
			uniformly at random a Yule process with parameter $1$, and then, by choosing uniformly at random an individual belonging to it.

			Formally, let  $(G_{\alpha}^t)_{t\geq1}$ be the random graph process corresponding to Simon model (described in  Subsection
			\ref{Simonmodel}) and, as above, let $t_0^i$ be the time when the $i$th vertex appears. Then,  for each  vertex in this process
			up to time $t_0^i$, say $v_j$,   $1\leq j\leq i$,
			let $\mathcal{Y}_k^j$  be the random variables  $\mathcal{Y}_k^j:=\ell^j_k-\ell^j_{k-1}$, for $k=1,2,\dots$,
			where  $\ell^j_0=t_0^i$,  and  $\ell^j_k$ is the minimum $t$ when there are exactly $k+1$ descendants of $v_j$ in
			$\{D_j(t)\}_{t\geq t_0^i}$, $k\geq 1$. Hence  $\mathcal{Y}_k^j$  represents  the waiting time between the appearance
			of the $k$th and the $(k+1)$th vertex in  $\{D_j(t)\}_{t\geq t_0^i}$. 			
			
			\begin{teo}
		        \label{S-Yule2}
				Let $z=\ln(1+y/(\ell^j_{k-1}-1))$, $k \ge 1$, $y > 0$, and $0<\varepsilon_t<1$ such that
				$t\varepsilon_t^2\longrightarrow\infty$ as $t\longrightarrow\infty$. Then,
				\begin{equation}
					\Big|\bbP(\mathcal{Y}_k^j\leq y)-\bbP(Z_k^j\leq z)\Big|<O\Big(\frac{1} {\ell^j_{k-1}\varepsilon_{\ell^j_{k-1}}^2}\Big),
				\end{equation}
				where $Z_k^j$ is an exponentially distributed random variable of parameter $k$.
			\end{teo}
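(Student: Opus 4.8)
The plan is to run the argument from the proof of Theorem \ref{S-Yule} almost verbatim, the one new feature being that the birth probability now carries the \emph{random} denominator $V(t)$ in place of the deterministic time $t$; this random denominator will be tamed by the Chebyshev estimate (\ref{chebyshevVt}) for $V(t)$. First I would record the conditional law of $\mathcal{Y}_k^j$. By the very definition of $\{D_j(t)\}_{t\ge t_0^i}$, once $v_j$ has $k$ descendants a new one is produced at time $r+1$ with probability $k\alpha/V(r)$, the trials at distinct times being conditionally independent given the sizes $V(r)$; hence for integers $y\ge 1$,
\[
\bbP\bigl(\mathcal{Y}_k^j=y\,\big|\,\{V(r)\}_{r\ge \ell^j_{k-1}}\bigr)=\frac{k\alpha}{V(\ell^j_{k-1}+y-1)}\prod_{r=\ell^j_{k-1}}^{\ell^j_{k-1}+y-2}\Bigl(1-\frac{k\alpha}{V(r)}\Bigr).
\]
Conditioning on the value of the stopping time $\ell^j_{k-1}$ and on the configuration at that instant, the increments of $V$ thereafter remain i.i.d.\ Bernoulli$(\alpha)$, so all the estimates below are legitimate; and since the processes $\{D_j(t)\}$ are identically distributed it suffices to argue for a single one.

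Next I would isolate the event on which $V$ is concentrated. One may assume $y$ small enough that $z=\ln(1+y/(\ell^j_{k-1}-1))$ stays bounded, since for larger $y$ both $\bbP(\mathcal{Y}_k^j\le y)$ and $\bbP(Z_k^j\le z)$ are within $O(1/\ell^j_{k-1})$ of $1$ and there is nothing to prove; in that regime the window $[\ell^j_{k-1},\ell^j_{k-1}+y-1]$ is short enough that $\varepsilon_r$ stays of the same order as $\varepsilon_{\ell^j_{k-1}}$ across it. Using (\ref{chebyshevVt}) (together with a maximal inequality for the martingale $r\mapsto V(r)-\alpha r$ to make the estimate uniform over the window), the event
\[
E:=\bigl\{\,V(r)=\alpha r\,(1+O(\varepsilon_r))\text{ for all }\ell^j_{k-1}\le r\le \ell^j_{k-1}+y-1\,\bigr\}
\]
satisfies $\bbP(E^c)=O\bigl(1/(\ell^j_{k-1}\varepsilon_{\ell^j_{k-1}}^2)\bigr)$, and the contribution of $E^c$ to $|\bbP(\mathcal{Y}_k^j\le y)-\bbP(Z_k^j\le z)|$ is bounded by this quantity.

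On $E$ one substitutes $k\alpha/V(r)=k/r+O(k\varepsilon_r/r)$ in the product above. The leading product $\prod_r(1-k/r)$ is exactly the object treated in the proof of Theorem \ref{S-Yule}, with the parameter $k(1-\alpha)$ there replaced by $k$: applying Lemma \ref{lema1} to it, summing $\bbP(\mathcal{Y}_k^j=y\mid E)$ over $y$ in the window, and invoking the Euler--Maclaurin estimate for $\sum j^{-(k+1)}$ exactly as there, one obtains
\[
\bbP\bigl(\mathcal{Y}_k^j\le y\,\big|\,E\bigr)=\Bigl(1+O\bigl(\tfrac{1}{\ell^j_{k-1}}\bigr)\Bigr)\Bigl[1-\exp\Bigl(-k\ln\bigl(1+\tfrac{y}{\ell^j_{k-1}-1}\bigr)\Bigr)\Bigr],
\]
the correction absorbing both Lemma \ref{lema1} and the $O(k\varepsilon_r/r)$ perturbation. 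Writing $z=\ln(1+y/(\ell^j_{k-1}-1))$, the bracket equals $\bbP(Z_k^j\le z)$ with $Z_k^j$ exponential of parameter $k$; combining this with the bound on the $E^c$ contribution yields the claim, the dominant error being the Chebyshev term $O(1/(\ell^j_{k-1}\varepsilon_{\ell^j_{k-1}}^2))$.

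The main obstacle I anticipate is precisely the uniform control of $V(r)$ over the whole window $r\in[\ell^j_{k-1},\ell^j_{k-1}+y-1]$: a naive union bound of the one-time estimate (\ref{chebyshevVt}) over all such $r$ diverges, so one must either restrict to the regime of bounded $z$ (where the window is comparatively short and $\varepsilon_r$ is essentially constant) or lean on a maximal inequality for $V(r)-\alpha r$, and then check that the accumulated error — including the $O(k\varepsilon_r/r)$ perturbation inside the product — is genuinely of order $1/(\ell^j_{k-1}\varepsilon_{\ell^j_{k-1}}^2)$ rather than larger. The secondary technicalities, that $\ell^j_{k-1}$ is a stopping time and that the $\{D_j\}$ are mutually dependent, are routine and are dispatched by conditioning on the state at $\ell^j_{k-1}$ and by the fact that the $D_j$ are equidistributed.
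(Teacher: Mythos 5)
Your proposal follows essentially the same route as the paper's proof: the paper likewise writes $\{\mathcal{Y}_k^j=y\}$ as a product of Bernoulli events with success probability $k\alpha/V(\ell^j_{k-1}+\ell)$, introduces the concentration events $\mathcal{E}_t=\{t(\alpha-\varepsilon_t)\leq V(t)\leq t(\alpha+\varepsilon_t)\}$ controlled by Chebyshev's inequality, sandwiches the resulting product between the two deterministic products obtained from the endpoints $\alpha\pm\varepsilon_r$, and then reuses Lemma \ref{lema1} and the Euler--Maclaurin estimate exactly as in Theorem \ref{S-Yule}. The only difference is bookkeeping: the paper applies the concentration event factor by factor and absorbs everything into a single $O(1/(\ell^j_{k-1}\varepsilon^2_{\ell^j_{k-1}}))$ term (under a monotonicity assumption on $\varepsilon_r$ over the window), whereas you condition on one global event and are, if anything, more explicit about the accumulation of errors that the paper glosses over.
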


 			\begin{obs}
 				\label{ObsS-Yule2}
				Since $t_0^i=\ell_0^j$ and $\ell_k^j>\ell_0^j$, $k\geq1$,	the previous theorem states  that for any $t^*=t_0^i$
				large enough but fixed,  
				\begin{equation}
					\Big|\bbP(\mathcal{Y}_k^j\leq y)-\bbP(Z_k^j\leq z)\Big|<O\Big(\frac{1}{t^*\varepsilon_{(t^*)^2}^2}\Big).
				\end{equation} 
				
				In words it means that from a fixed but large time $t^*$, all the waiting times $\mathcal{Y}_k^j$
				are approximately exponential random variables of parameter $k$, with an error term smaller than
				$O\left(1/(t^*\varepsilon_{(t^*)^2}^2)\right)$.  
				Thus,  for $t^*$ large enough we start to see a process  which is very  close to a Yule process with parameter $k$. 
			\end{obs}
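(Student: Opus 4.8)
The plan is to read Remark~\ref{ObsS-Yule2} as nothing more than the \emph{uniform-in-$k$} form of Theorem~\ref{S-Yule2}, so that no new estimate on the waiting times $\mathcal{Y}_k^j$ is needed. Theorem~\ref{S-Yule2} already supplies, for each fixed $k\geq1$ and each vertex $v_j$ present by time $t_0^i$, the single-step bound $|\bbP(\mathcal{Y}_k^j\leq y)-\bbP(Z_k^j\leq z)|<O(1/(\ell^j_{k-1}\varepsilon_{\ell^j_{k-1}}^2))$, and the essential observation is that the right-hand side depends on $k$ \emph{only} through the starting time $\ell^j_{k-1}$ of the interval that $\mathcal{Y}_k^j$ measures. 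Hence the entire task reduces to bounding $\ell^j_{k-1}$ from below uniformly in $k$ and then invoking monotonicity of the error function.

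First I would fix the $i$th vertex and put $t^*=t_0^i$. By the construction of the descendant processes in Subsection~\ref{Relation2}, every $\{D_j(t)\}_{t\geq t_0^i}$ starts at $t_0^i$, so $\ell^j_0=t_0^i=t^*$ for all $j\in\{1,\dots,i\}$; moreover $D_j(t)$ is nondecreasing in $t$, so its hitting times satisfy $\ell^j_0<\ell^j_1<\ell^j_2<\cdots$. Consequently $\ell^j_{k-1}\geq\ell^j_0=t^*$ for every $k\geq1$, which is precisely the chain $\ell^j_k>\ell^j_0=t_0^i=t^*$ cited in the remark.

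It then remains to push this lower bound through the error term. The standing hypothesis $t\varepsilon_t^2\to\infty$ makes the reciprocal map $t\mapsto 1/(t\varepsilon_t^2)$ eventually nonincreasing (for the canonical choice $\varepsilon_t=\sqrt{\ln t/t}$ this is transparent, and in general it is exactly what ``for $t^*$ large enough but fixed'' secures). Substituting $\ell^j_{k-1}\geq t^*$ into the bound of Theorem~\ref{S-Yule2} therefore dominates its right-hand side, simultaneously for all $k\geq1$, by the error incurred at the smallest admissible time $t^*$; and since $\bbP(D_j(t)\leq d)$ is the same for every $1\leq j\leq i$, the same domination holds simultaneously for all those vertices. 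This yields
\[
\Big|\bbP(\mathcal{Y}_k^j\leq y)-\bbP(Z_k^j\leq z)\Big|<O\Big(\frac{1}{t^*\varepsilon_{(t^*)^2}^2}\Big),
\]
which is the assertion of the remark.

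The only delicate point is the eventual monotonicity of $t\mapsto1/(t\varepsilon_t^2)$, i.e.\ making precise how large $t^*$ must be taken so that both the $O$-estimate of Theorem~\ref{S-Yule2} and this monotonicity are in force from $t^*$ onward; once that threshold is absorbed into ``$t^*$ large enough but fixed'', everything else is a direct substitution of the minimal time $t^*$ into an already-proved bound, with no further probabilistic input.
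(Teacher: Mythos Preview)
Your proposal is correct and matches the paper's own reasoning: the remark is not given a separate proof in the paper but is presented as an immediate consequence of Theorem~\ref{S-Yule2} via the chain $\ell^j_{k-1}\geq\ell^j_0=t_0^i=t^*$, exactly as you argue. You have simply made explicit the monotonicity step (that $t\mapsto1/(t\varepsilon_t^2)$ is eventually nonincreasing) which the paper leaves tacit; the one extraneous ingredient is your appeal to the distributional equality $\bbP(D_j(t)\leq d)$ being the same for all $j$, which is not needed here since the uniformity over $j$ already follows from the common starting time $\ell^j_0=t^*$.
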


			\begin{proof}[Proof of Theorem \ref{S-Yule2}]
				Let us define the Bernoulli random variables $\mathcal{X}^j_{k,\ell}$, $\ell\geq1$, with
				$\bbP(\mathcal{X}^j_{k,\ell}=1)=k\alpha/V(\ell^j_{k-1}+\ell)=1-\bbP(\mathcal{X}^j_{k,\ell} =0)$, so,
				$\{\mathcal{X}^j_{k,\ell}=1\}$ denotes the event that any of the $k$ descendant of $v_j$  in $\{D_j(t)\}_{t\geq t_0^i}$
				gives birth to a new one at time $\ell^j_{k-1}+\ell$.				
				Note that  the event $\{\mathcal{Y}_k^j=y\}$ is equivalent to  the event
				$\{\mathcal{X}^j_{k,1}=0,\mathcal{X}^j_{k,2}=0,\dots,\mathcal{X}^j_{k,y}=1\}$.
				Now define the events $\mathcal{E}_t:=\{t(\alpha-\varepsilon_t)\leq V(t)\leq t(\alpha+\varepsilon_t)\}$.
				By Chebyschev's inequality we have $\bbP(\mathcal{E}_t^c)\leq \alpha(1-\alpha)/t\varepsilon_t^2$,
				so $\bbP(\mathcal{E}_t)\longrightarrow 1$ if  $t\varepsilon_t^2\longrightarrow\infty$ as $t\longrightarrow\infty$.
				Then observe that 
				\begin{align*}
					\bbP(\mathcal{X}^j_{k,\ell}=x)\sim\bbP(\mathcal{X}^j_{k,\ell}=x\mid \mathcal{E}_{\ell^j_{k-1}+\ell-1})
					+O\Big(\frac{1}{(\ell^j_{k-1}+\ell-1)\varepsilon_{\ell^j_{k-1}+\ell-1}^2}\Big),
				\end{align*}
				and
				\begin{align}
					\label{YV}
					\bbP(\mathcal{Y}_k^j=y)\sim\Big[\bbP(\mathcal{X}^j_{k,y}=1\mid \mathcal{E}_{\ell^j_{k-1}+y-1})
					\prod_{\ell=1}^{y-1} \bbP(\mathcal{X}^j_{k,\ell}=0\mid \mathcal{E}_{\ell^j_{k-1}+\ell-1})\Big]
					+O\Big(\frac{1}{(\ell^j_{k-1})\varepsilon_{\ell^j_{k-1}}^2}\Big).
				\end{align}
				Assuming that $\varepsilon_{\ell^j_{k-1}+x-1}>\varepsilon_{\ell^j_{k-1}+x-2}>\dots>\varepsilon_{\ell^j_{k-1}}$,
				we obtain that the right side of  (\ref{YV}) is bounded above by
                \begin{align}
                	\label{Ymenor}
					& \frac{\alpha k}{(\ell^j_{k-1}+y-1)(\alpha-\varepsilon_{\ell^j_{k-1}+y-1})}
					\left(1-\frac{\alpha k}{(\ell^j_{k-1}+y-2)(\alpha+\varepsilon_{\ell^j_{k-1}+y-2})}\right) \nonumber\\
					& \qquad \times \dots \times \left(1-\frac{\alpha k}{\ell^j_{k-1}(\alpha+\varepsilon_{\ell^j_{k-1}})}\right)
					+O\Big(\frac{1}{\ell^j_{k-1}\varepsilon_{\ell^j_{k-1}}^2}\Big)\nonumber\\
					& = \frac{\alpha k}{(\ell^j_{k-1}+y-1)(\alpha-\varepsilon_{\ell^j_{k-1}+y-1})}
					\prod_{r=\ell^j_{k-1}}^{\ell^j_{k-1}+y-2}\left(1-\frac{\alpha k}{r(\alpha+\varepsilon_{r})}\right)
					+O\Big(\frac{1}{\ell^j_{k-1}\varepsilon_{\ell^j_{k-1}}^2}\Big),
				\end{align}
				and bounded below by
				\begin{align}
					\label{Ymayor}
					&\frac{\alpha k}{(\ell^j_{k-1}+y-1)(\alpha+\varepsilon_{\ell^j_{k-1}+y-1})}
					\left(1-\frac{\alpha k}{(\ell^j_{k-1}+y-2)(\alpha-\varepsilon_{\ell^j_{k-1}+y-2})}\right) \nonumber\\
					& \qquad \times \dots \times\left(1-\frac{\alpha k}{\ell^j_{k-1}(\alpha-\varepsilon_{\ell^j_{k-1}})}\right)
					+O\Big(\frac{1}{\ell^j_{k-1}\varepsilon_{\ell^j_{k-1}}^2}\Big)\nonumber\\
					& = \frac{\alpha k}{(\ell^j_{k-1}+y-1)(\alpha+\varepsilon_{\ell^j_{k-1}+y-1})}\prod_{r=\ell^j_{k-1}}^{\ell^j_{k-1}+y-2}
					\left(1-\frac{\alpha k}{r(\alpha-\varepsilon_{r})}\right)+O\Big(\frac{1}{\ell^j_{k-1}
					\varepsilon_{\ell^j_{k-1}}^2}\Big).
				\end{align}
				Thus, in a similar manner as we did in the proof of Theorem \ref{S-Yule}, by using Lemma \ref{lema1} and
				Euler--Maclaurin formula  to (\ref{Ymenor}) and (\ref{Ymayor}),   we find that
				\begin{align}
					\label{TTsim}
					\Big|\bbP(\mathcal{Y}_k^j\leq y)-[1-\exp(-k\ln(1+y/(\ell^j_{k-1}-1)))]\Big|<O\Big(\frac{1}{\ell^j_{k-1}
					\varepsilon_{\ell^j_{k-1}}^2}\Big).\nonumber
				\end{align}
				Then, taking $z=\ln(1+y/(\ell^j_{k-1}-1))$, we obtain
				\begin{align}
					\Big|\bbP(\mathcal{Y}_k^j\leq y)-\bbP(\mathcal{Z}_k^j\leq z)
					\Big|<O\Big(\frac{1}{\ell^j_{k-1}\varepsilon_{\ell^j_{k-1}}^2}\Big),\nonumber
				\end{align}  
				where $\mathcal{Z}_k^j$ is an exponentially distributed random variable with parameter $k$,	 which proves the thesis.
			\end{proof}

	\section{Discussion and conclusions}

		\label{finale}
		To compare the Barab\'asi--Albert and Simon models, we considered a third model that we
		called here the II-PA model, first introduced in \cite{Newman2005} with a different name. Then we gave a common description of
		the three models by introducing three different random graph processes related to them. This representation allowed us
		to clarify in which sense the three models can be related.  
		For each fixed time, if $m=1$, we proved that the Barab\'asi--Albert and the II-PA
		models have exactly the same preferential attachment probabilities (Theorem \ref{m=1}). Furthermore, since in the first model
		the preferential attachment is meant with respect to the whole degree of each vertex while in the second case it is meant with respect
		only to the in-degree, the conclusion is that, for a uniformly selected random vertex, the degree distribution
		in the Barab\'asi--Albert model equals the in-degree distribution in the Simon model.
		Note that $m=1$ is the only case in which this is true.

		Since the direct comparison between Barab\'asi--Albert and Simon model is not possible we first compared 
		II-PA model with Barab\'asi--Albert model (Theorem \ref{m=1}), and then II-PA model with Simon model (Theorem \ref{newman}).
		We underline that, even if the introduction of II-PA model was functional to the study of the connections between
		the Barab\'asi--Albert and Simon models, this hybrid model is interesting in itself. 
		
		Regarding the connections between Simon and II-PA models, Theorem \ref{newman} shows that when time goes to infinity,
		the II-PA model has the same limiting in-degree distribution as that of the Simon model with parameter
		$\alpha=1/(m+1)$, for any $m\geq1$. The proof uses the Azuma--Hoeffding concentration inequality
		and the supermartingale's convergence theorem.
				
		Combining  Theorem \ref{m=1} and  \ref{newman},  we conclude that, in the limit,
		the Simon model has the same in-degree distribution as that of the Barab\'asi--Albert model, for $\alpha=1/2$ and $m=1$.
		The existing relations between the three models are summarized in Figure \ref{angelica}.

		On the other hand, Yule model is defined in continuous time. In Section \ref{Relation2} we give
		a mathematical explanation of the reason why,
		when time goes to infinity the distribution of the size of a genus selected uniformly at random
		in the Yule model coincide with the in-degree distribution of Simon model.
		More precisely, we recognize which are the two different processes that describe Simon model and how they are related with a Yule model.
		Theorem \ref{S-Yule} and Theorem \ref{S-Yule2} show that,
		as time flows, these two different processes approximates the behavior of a continuous time process
		that in fact corresponds to a Yule model with parameters $(1-\alpha,1)$. This result is obtained in probability. 
			
		Many other preferential attachment models have appeared in the  literature in the last years. 
		In \cite{CooperFrieze} for instance,  a  general model of web graphs is studied. 
		With the right choice of the parameters this model includes the Barab\'asi--Albert model, however,  Simon and Yule models do not
		fit into the general set of assumptions considered in \cite{CooperFrieze}.  
		For a discussion of several related preferential attachment models  see for  example \cite{RemcoBook},  Chapter 8,  or
		\cite{Durrett2006}, Chapter 4.  

		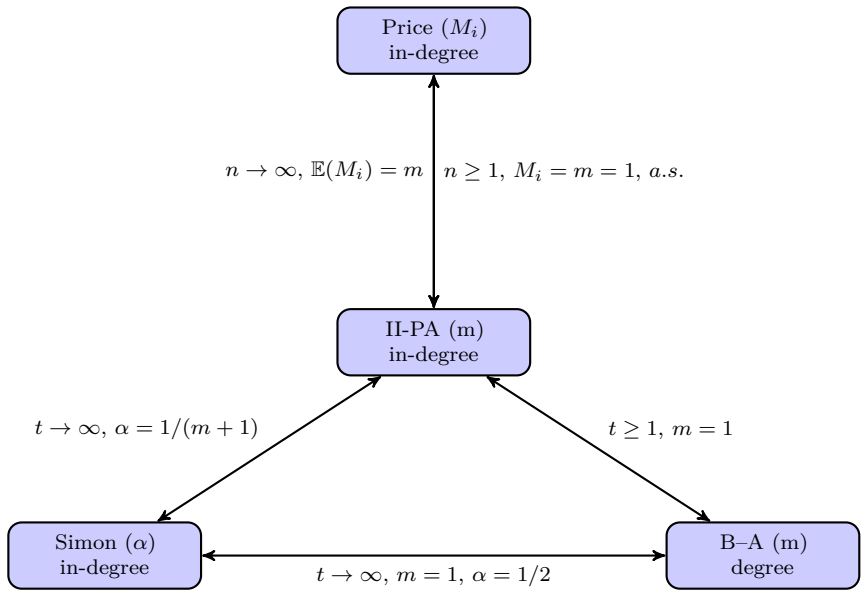
\begin{figure}[ht]
			\begin{center}
				\begin{tikzpicture}[<->,>=stealth',shorten >=0pt,auto,node distance=4.0cm,thick]
					\tikzstyle{every state}=[fill=blue!20!white,rectangle,rounded corners=5pt,font=\footnotesize, align=center]
					\node[state,text width=2.3cm, xshift=1.5cm] (A)   {Simon~($\alpha$)  in-degree};
					\node[state,text width=2.3cm, xshift=1.5cm] (B) [above right of=A] {II-PA~(m) in-degree};
					\node[state,text width=2.3cm, xshift=1.5cm] (C) [below right of=B] {B--A~(m) \\ degree};
					\node[state,text width=2.3cm, xshift=0cm] (D) [above  of=B] {Price~($M_i$) \\ in-degree};		
					\path (A)    edge node [above left,font=\footnotesize] {$t\rightarrow\infty$, $\alpha=1/(m+1)$}
						  (B)    edge node [below,font=\footnotesize] {$t\rightarrow\infty$, $m=1$, $\alpha=1/2$}  (C)
						  (B)    edge node [font=\footnotesize] {$t\geq1$, $m=1$}(C) 
						  (D)   edge node [above left, font=\footnotesize] {$n\rightarrow\infty$, $\bbE(M_i)=m$}(B)
						  (D)   edge node [above right, font=\footnotesize] {$n\geq1$, $M_i=m=1$, $a.s.$}(B);
				\end{tikzpicture}
			\end{center}
			\caption{\footnotesize{\label{angelica}The relations between Simon, II-PA, Price and Barab\'asi--Albert (B--A in the picture) models.
				Note that II-PA and Barab\'asi--Albert models can be put in relation for any time $t$ but just in the case $m=1$.
				Instead, the connections between II-PA and Simon models and Simon and Barab\'asi--Albert models, respectively,
				hold in the limit for $t$ going to infinity (w.r.t in-degree or degree distribution). We include also the Price model
				which is by definition equivalent to the II-PA if $M_i=m=1$ almost surely. Moreover, Price and II-PA models have the same limit
				in-degree distribution when $\bbE(M_i)=m$.}}
		\end{figure}


	\bibliographystyle{plain}
	\bibliography{pachon-polito-sacerdote-REVISED-LAST}

\end{document}